	\definecolor{colorCS}{RGB}{191,207,239}
	\newcounter{CS}
	\definecolor{colorVK}{RGB}{207,239,191}
	\newcounter{VK}
	\definecolor{colorMR}{RGB}{186,180,231}
	\newcounter{MR}
	\xpatchcmd{\@todo}{\setkeys{todonotes}{#1}}{\setkeys{todonotes}{inline,#1}}{}{}
	\newcolumntype{R}{>{\raggedleft\arraybackslash}X}
	\newcommand{\beq}{\begin{equation}}
	\newcommand{\eeq}{\end{equation}}
	\newcommand{\beqas}{\begin{eqnarray*}}
	\newcommand{\eeqas}{\end{eqnarray*}}
	\newcommand{\bbC}{{\mathbb C}}
	\newcommand{\bbR}{{\mathbb R}}
	\newcommand{\bbN}{{\mathbb N}}
	\newcommand{\bbZ}{{\mathbb Z}}
	\newcommand{\R}{\bbR}
	\newcommand{\N}{\bbN}
	\newcommand{\Z}{\bbZ}
	\newcommand{\C}{\bbC}
	\newcommand{\Nz}{\N_0}
	\newcommand{\Restr}[1]{\raise-0.5ex\hbox{$\mid$}_{#1}}
	\renewcommand{\d}{\,{\rm d}}
	\newcommand{\Jac}{\mathscr{J}}
	\newcommand{\MT}{\mathsf{T}}
	\newtheorem{assumption}[theorem]{Assumption}
	\newtheorem{remark}[theorem]{Remark}
	\newcommand{\DelimiterGroup}[4]{
		\ifcase#1\relax
			#2 #4 #3	% 0
		\or
			\bigl#2 #4 \bigr#3	% 1
		\or
			\Bigl#2 #4 \Bigr#3	% 2
		\or
			\biggl#2 #4 \biggr#3	% 3
		\or
			\Biggl#2 #4 \Biggr#3	% 4
		\or
						% 5
		\or
						% 6
		\or
			\mleft#2 #4 \mright#3	% 7
		\or
			\left#2 #4 \right#3	% 8
		\else
						% otherwise
		\fi
	}
	\newcommand{\Par}[2][0]{\DelimiterGroup{#1}{(}{)}{#2}}
	\newcommand{\SqBr}[2][0]{\DelimiterGroup{#1}{[}{]}{#2}}
	\newcommand{\CuBr}[2][0]{\DelimiterGroup{#1}{\{}{\}}{#2}}
	\newcommand{\Abs}[2][0]{\DelimiterGroup{#1}{\lvert}{\rvert}{#2}}
	\newcommand{\IndNorm}[2][0]{\DelimiterGroup{#1}{\lvert}{\rvert}{#2}}
	\newcommand{\Norm}[2][0]{\DelimiterGroup{#1}{\lVert}{\rVert}{#2}}
	\newcommand{\Seminorm}[2][0]{\DelimiterGroup{#1}{\lvert}{\rvert}{#2}}
	\newcommand{\Set}[2][0]{\DelimiterGroup{#1}{\{}{\}}{#2}}
	\newcommand{\Tuple}[2][0]{\DelimiterGroup{#1}{(}{)}{#2}}
	\newcommand{\Ceil}[2][0]{\DelimiterGroup{#1}{\lceil}{\rceil}{#2}}
	\newcommand{\IProd}[3][0]{\DelimiterGroup{#1}{\langle}{\rangle}{#2,#3}}
	\newcommand{\IntOO}[3][0]{\DelimiterGroup{#1}{(}{)}{#2,#3}}
	\newcommand{\IntCC}[3][0]{\DelimiterGroup{#1}{[}{]}{#2,#3}}
	\newcommand{\IntCO}[3][0]{\DelimiterGroup{#1}{[}{)}{#2,#3}}
	\newcommand{\IntOC}[3][0]{\DelimiterGroup{#1}{(}{]}{#2,#3}}
	\newcommand{\IU}{\mathsf{i}}
	\newcommand{\SetMinus}{\mathbin{\mathpalette\rsetminusaux\relax}}
	\newcommand{\rsetminusaux}[2]{\mspace{-4mu}
	    \raisebox{\rsmraise{#1}\depth}{\rotatebox[origin=c]{-20}{$#1\smallsetminus$}}
	  \mspace{-4mu}
	}
	\newcommand{\rsmraise}[1]{%
	  \ifx#1\displaystyle .8\else
	    \ifx#1\textstyle .8\else
	      \ifx#1\scriptstyle .6\else
		.45%
	      \fi
	    \fi
	  \fi}
	\newcommand{\QQ}{{\:}\!}
	\DeclareMathOperator*{\Span}{span}
	\DeclareMathOperator*{\Dim}{dim}
	\renewcommand{\Vec}[1]{
		\boldsymbol{#1}
	}
	\newcommand{\Ten}[1]{
		\boldsymbol{#1}
	}
	\title{Quantized Tensor FEM for Multiscale Problems:
	diffusion problems in two and three dimensions.
\thanks{
Work initiated while VK and IO visited the
Institute for Mathematical Research (FIM) of
ETH Z\"urich in 2017 (IO) and 2018 (VK),
and
performed in part while the authors participated
in the thematic programme
``Numerical Analysis of Complex PDE Models in the Sciences''
of the
Erwin Schr\"odinger International Institute for Mathematics and Physics (ESI),
Vienna, Austria, in June--August 2018.
}
}
\author{
V. Kazeev
\thanks{
	Faculty of Mathematics,
	University of Vienna,
	Oskar-Morgenstern-Platz 1,
	1090 Vienna, Austria.
	\texttt{vladimir.kazeev@univie.ac.at}.
}
\and
I. Oseledets
\thanks{
  Skolkovo Institute of Science and Technology, Novaya Ul.~100, 143025 Skolkovo, Moscow Region, Russia
  and
  Institute of Numerical Mathematics, Russian Academy of Sciences, Gubkina St.~8, 119333, Moscow, Russia.
	\texttt{i.oseledets@skoltech.ru}
}
\and
M. Rakhuba
\thanks{
	Seminar for Applied Mathematics, ETH Zurich, R\"{a}mistrasse~101, 8092 Zurich, Switzerland.
% 	\texttt{maksim.rakhuba@sam.math.ethz.ch}
	\texttt{\{maksim.rakhuba,christoph.schwab\}@sam.math.ethz.ch}
}
\and
Ch. Schwab
\footnotemark[4]
}
\begin{document}
\maketitle

\vskip 1cm
\begin{abstract}
	Homogenization in terms of multiscale limits transforms
	a multiscale problem with $n+1$ asymptotically separated microscales
	posed on a physical domain $D \subset \R^d$
	into a one-scale problem posed on a product domain of
	dimension $(n+1)d$ by introducing $n$
	so-called ``fast variables''.
        This procedure allows to convert $n+1$ scales
	in $d$ physical dimensions into a
        single-scale structure in $(n+1)d$ dimensions.
	We prove here that both
        the original, physical multiscale problem and the corresponding
        high-dimensional, one-scale limiting problem
	can be efficiently treated numerically with the recently developed
	\emph{quantized tensor-train finite-element method} (QTT-FEM).

The method is based on restricting computation to
sequences of nested subspaces of low dimensions
(which are called tensor ranks)
within a vast but generic ``virtual'' (background) discretization space.
In the course of computation, these subspaces are computed iteratively
and data-adaptively at runtime, bypassing any ``offline precomputation''.
For the purpose of theoretical analysis,
such low-dimensional subspaces are constructed analytically
so as to bound the tensor ranks vs. error tolerance $\tau>0$.

	We consider a model linear elliptic multiscale problem
        in several physical dimensions
	and show, theoretically and experimentally, that both
        (i) the solution of the associated high-dimensional one-scale problem
	and
        (ii) the corresponding approximation to the solution of the multiscale problem
	admit efficient approximation by the QTT-FEM.
	These problems can therefore be numerically solved in a scale-robust fashion 
	by standard (low-order) PDE discretizations combined with
	state-of-the-art general-purpose solvers for tensor-structured linear systems.
	% }
	We prove scale-robust exponential convergence, i.e.,
        that QTT-FEM achieves accuracy $\tau$ with 
        the number of effective degrees of freedom scaling
	polynomially in $\log \tau$. 
\end{abstract}
%%%%%%%%%%%%%%%%%%%%%%%%%%%%%%%%%%%%%%%%%%%%%%%%%%%%%%%%%%%%%%%%%%
\section{Introduction}
\label{sc:Intro}
%%%%%%%%%%%%%%%%%%%%%%%%%%%%%%%%%%%%%%%%%%%%%%%%%%%%%%%%%%%%%%%%%%

The efficient numerical solution of mathematical models of
\emph{physical processes with multiple scales}
has undergone a rapid development during recent years.
Several classes of computational approaches have been put forward
which try, usually through selective and sparing access of the
microscopic structure of the problem, to correctly numerically
approximate the ``effective'', macroscopic or ``homogenized''
features of the solution.
In the context of Finite Element discretizations,
these methodologies are referred to as \emph{multiscale FEM}
(MsFEM).
In a broader context, such computational approaches for the numerical approximation
of multiscale differential equation models (ordinary or partial)
have been referred to as \emph{hierarchic multiscale methods} (HMM).
We refer to \cite{HouMsFEM,AbdulleActa} and the references therein for a comprehensive discussion.

In these approaches, the solution of the correct macroscopic, or ``upscaled''
mathematical model is numerically approximated by \emph{selective, localized
access to the microscopic information}.
This can be achieved by the mentioned methods
in (essentially optimal) \emph{numerical complexity that is independent of the
microscopic length scale of the problem}.
Additionally,
\emph{postprocessing techniques} allow for \emph{localized numerical recovery of the microscopic structure
of the physical solution},
at extra computational costs.

An alternative computational approach aims at the simultaneous numerical approximation
of the macroscopic, homogenized solution and at the numerical approximation of the
microscopic structure of the physical solution, \emph{throughout the physical domain},
at computational work which is independent of the physical length scale of data.
This is feasible, in general, under additional assumptions on the microstructure, such
as (locally) periodicity or ergodicity.
Under such assumptions, it is known that for linear, second order elliptic PDEs
the physical solution and the interaction of all scales can be described by certain
two- and $(n+1)$-scale limits \cite{Nguetseng:1989:Homog,GAHom2Scale1992,Allaire:1996:ReiteratedHomog}.
These limits take the form of solutions of \emph{high-dimensional, elliptic boundary value
problems}, which are independent of the scale parameters and posed on a Cartesian product
of the physical domain $D$ and of the $n$ ``unit-cells'' $Y_i$, $i=1,...,n$
that describe the structure of the fast scales of the multiscale solution.
As a result,
\emph{$(n+1)$-scale limits trade scale-resolving requirements
for high-dimensionality~\cite{S02_441}}.
Loosely speaking,
scale-resolution is traded for the ``curse of dimensionality'':
once efficient numerical approximations for such high-dimensional $(n+1)$-scale limiting problems
are available, \emph{robust, scale-independent discretizations of
multiscale models can be derived}.
This idea, put forward in \cite{S02_441}, has
been developed in the context of \emph{sparse tensor FEM} multiscale diffusion problems
in \cite{VHHCS2004} and,
subsequently, for elasticity and
 electromagnetics~\cite{VHH2008Monot,BXVHH2014,BXVHH2015b,BXVHH2015}.
In particular, \emph{algebraic convergence rates independent
of the scale parameter with weak or no dependence on the number of $n$ of fast variables}
were established.
The implementation of these sparse tensor FEM discretizations of the high-dimensional
limits requires, however,
\emph{explicit derivation of the PDEs which describe the $(n+1)$-scale limits}.
This may, in particular for nonlinear multiscale problems, not be feasible,
even though the existence of $(n+1)$-scale limits is mathematically assured.

\subsection{Contributions}
\label{sc:Contr}
%%%%%%%%%%%%%%%%%%%%%%%%%%%%%%%%%%%%%%%%%%%%%%%%%%%%%%%%%%%%%%%%%%

We analyze the novel, tensor-structured
numerical approximation of the solution of a linear second-order elliptic
PDE whose diffusion tensor depends on $n+1$ separated scales,
i.e., in the classical setting of $(n+1)$-scale homogenization.
Specifically, following earlier work
\cite{Khoromskij:2015:Homog,Khoromskij:2017:Homog,VKIOMRCS2017}
we consider the \emph{quantized tensor-train finite-element method} (QTT-FEM),
combining adaptive low-rank tensor approximation with
\emph{quantization}~\cite{Oseledets:2010:QTT,Khoromskij:2011:QuanticsApprox}
to efficiently represent the multiscale structure of data.

In the present paper, we first prove that the QTT-FEM allows for
\emph{exponentially convergent} numerical approximations
to the scale-interaction
functions involved in the $(n+1)$-scale limits and, as a consequence,
to the homogenized solutions. Specifically, we construct ``by hand'' certain
approximations that, with respect to the discretization parameter,
are sufficiently accurate and have sufficiently low tensor ranks.

The idea of approximating the multiscale problem by reapproximating the homogenized problem
(derived by $(n+1)$-scale convergence~\cite{Allaire:1996:ReiteratedHomog,DCPDRZ2006}),
proposed for elliptic multiscale problems in \cite{S02_441},
was exploited in
the context of 
\emph{sparse grid} approximations~\cite{VHHCS2004,VHH2008Monot,HS11_2105}. 
However, our present perspective extends further, as the motivation for
considering approximations based on homogenization.
In practice, the QTT-FEM can completely bypass the homogenization procedure and
operate entirely on the physical domain, adaptively accessing the fine-scale information of the PDE.
Naturally, the numerical approximations found by this approach
are better adapted to the data and
are more efficient than any particular approximations we construct ``by hand''
through the re-approximation of the corresponding homogenized problem.
In Section~\ref{sc:NumReS}, we report numerical results obtained by such a
practical computational multiscale
QTT-FEM algorithm, built upon the TT~Toolbox~\cite{TT-Toolbox}.

%%%%%%%%%%%%%%%%%%%%%%%%%%%%%%%%%%%%%%%%%%%%%%%%%%%%%%%%%%%%%%%%%%
\subsection{Structure of the present paper}
\label{sc:Struct}
%%%%%%%%%%%%%%%%%%%%%%%%%%%%%%%%%%%%%%%%%%%%%%%%%%%%%%%%%%%%%%%%%%
In Section \ref{sec2}, we describe the $n$-scale homogenization problem,
and
present in particular the QTT discretization of this problem in the physical domain
in Section \ref{subsectensdiscr}.
The emphasis in Section \ref{sec2}
is to present the $n$-scale problem and its quantized, tensor-formatted
discretization entirely in the physical domain.
Section \ref{sec:UpSc} presents the asymptotic analysis of the
$n$-scale solution by the so-called unfolding method:
the asymptotic limit of the physical problem
is described by a high-dimensional one-scale problem.
To this end,
we recapitulate results from
\cite{Nguetseng:1989:Homog,GAHom2Scale1992,Allaire:1996:ReiteratedHomog}
on reiterated homogenization for linear, elliptic multiscale problems,
which are required in the ensuing numerical analysis of the QTT-FE
approach.

Section \ref{sec:Approx} will develop novel approximation rate results
for the solution of the $(n+1)$-scale limit which are, subsequently,
used to obtain \emph{quantized tensor-rank bounds} for the physical,
$(n+1)$-scale solution.

Section \ref{sc:NumReS} then will present numerical experiments
which model multiscale problems where the QTT-ranks of the numerical
solutions are explicitly estimated numerically.

Finally, Section \ref{sc:ConclRmk} and the Appendix
contain a discussion of the results and a few proofs postponed due to their technicality.

\section{Model elliptic multiscale problem}\label{sec2}

We consider a bounded ``physical'' domain $D \subset \R^d$
(with which, for notational convenience, we associate the macroscale $\varepsilon_0 = 1$)
and a moderate number $n\in\N$ of microscales
$\varepsilon_1,\ldots,\varepsilon_n$,
which we assume to be positive functions
of a scale parameter $\varepsilon$ such that
$\lim_{\varepsilon \to 0} \varepsilon_i = 0$
for all $i\in\Set{1,\ldots,n}$.
We additionally assume
\emph{asymptotic scale separation}:
\begin{equation}\label{eq:scale-sep}
	\lim_{\varepsilon \to 0} \varepsilon_{i+1}/\varepsilon_i=0
\end{equation}
for $i\in\Set{1,\ldots,n-1}$.

Further, we assume that there exist $n$ \emph{unit cells}
$Y_1,\ldots,Y_n$
such that
$D$ is partitioned into a union of translations of $\varepsilon_1 Y_1$
and each $Y_{i-1}$ with $i\in\Set{2,\ldots,n}$
is partitioned into a union of translations of $\varepsilon_i Y_i$.
Specifically, we deal with the case of
$Y_1,\ldots,Y_n = \IntOO{0}{1}^d$ in the present paper,
while more sophisticated constructions may be used to model,
e.g., perforated media.
For notational convenience, we set
$\bm{Y}_0 = \Set{0}$ and
$\bm{Y}_i = Y_1 \times\cdots\times Y_i$ for each $i\in\Set{1,\ldots,n}$.

To formulate a multiscale diffusion problem on $D$,
we consider a matrix function $A$
defined on $D \times \bm{Y}_n$,
which therefore depends on a macroscale (``slow'') variable
and on $n$ microscale (``fast'') variables.
We will consider multiscale diffusion coefficients $A^\varepsilon$ induced by
functions satisfying the following assumption.

\begin{assumption}\label{As:Coeff}
	$A \in L^{\infty}\Par{D  \QQ ; \,   C_{\#} \Par{\bm{Y}_n \QQ ; \, \R^{d \times d}_{\text{\rm sym}} \, }}$
	is essentially bounded and
	uniformly positive definite with constants $\Gamma$ and $\gamma$:
	$
		\gamma
		\leq
		\xi^\MT
		A(x,\bm{y}_n) \,
		\xi
		\leq \Gamma
	$
	for every unit vector
	$\xi\in\R^d$,
	a.e. $x\in D$ and all
	$\bm{y}_n\in \bm{Y}_n$.
\end{assumption}

Here and throughout, by $C_{\#}(\bm{Y}_n)$ we denote
the space of functions that are continuous
on $\overline{\bm{Y}_n}$
and $Y_i$-periodic with respect to the $i$th variable for
each $i\in\Set{1,\ldots,n}$.

For every $\varepsilon>0$, a function $A$ satisfying Assumption~\ref{As:Coeff} induces
a multiscale coefficient
$A^\varepsilon \in L^{\infty}\Par{D}$ as follows:
\begin{equation}\label{eq:Aeps}
A^\varepsilon\Par{x}
=
A \Par[2]{ x,\frac{x}{\varepsilon_1},\ldots,\frac{x}{\varepsilon_n} }
\quad\text{for all}\quad
x \in D
\, .
\end{equation}
With such a coefficient, we consider the following model variational problem on $V = H^1_0 \Par{D}$:

\begin{equation}\label{Eq:ProblemEps}
	\text{find $u^\varepsilon\in V$ such that}
	\quad
	\int_D \Par{\nabla v}^\MT A^\varepsilon \, \nabla u^\varepsilon
	=
	\int_D f v
	\quad\text{for all}\quad
	v\in V
	\, ,
\end{equation}
where $f\in L^2(D)$ is a forcing term.
Assumption~\ref{As:Coeff} and the Lax--Milgram theorem guarantee that this problem has a unique solution,
which satisfies the stability bound
\[
	\Seminorm{u^\varepsilon}_{H^1\Par{D}}
	\leq
	\gamma^{-1}
	\sup_{v \in V \SetMinus\Set{0}}
	\frac{\Abs{f(v)}}{\Seminorm{v}_{H^1\Par{D}}}
	% \Norm{f}_{H^{-1}\Par{D}}
	\leq
	C \, \gamma^{-1}
	\sup_{v \in V \SetMinus\Set{0}}
	\frac{\Abs{f(v)}}{\Norm{v}_{L^2(D)}}
	=
	C \, \gamma^{-1} \Norm{f}_{L^2\Par{D}}
	\, ,
\]
where $C$ is the classical Poincar\'{e} constant for $D$.

Although the forcing term $f$ is assumed to be independent of the
scale parameter $\varepsilon$ for simplicity,
we hasten to add that all results
that follow admit a straightforward generalization
to the case when $f$ exhibits a microscale structure analogous
to the one expressed by~\eqref{eq:Aeps}.
\subsection{Low-rank tensor multilevel discretization}
\label{subsectensdiscr}
In this section,
we give an explicit construction of
the low-rank tensor multilevel discretization of the problem~\eqref{Eq:ProblemEps}
for the case when $D=Y_1=\cdots=Y_n=\IntOO{0}{1}^d$.
We start with defining the underlying
\emph{virtual grid} and the associated finite-element spaces.
\subsubsection{Virtual grids and low order finite-element spaces}
\label{sc:fesp-intro}
Let $L\in\Nz$ be arbitrary.
We define index sets $\mathcal{I}^L = \Set{1,\ldots,2^L}$,
and
$\mathcal{J}^L = \Set{1,\ldots,2^L -1 }$,
select the meshwidth at mesh level $L$ as $h_L = 2^{-L}$
and
consider a set of equispaced points on $\IntOO{0}{1}$:
\begin{equation}\label{Eq:Part1dNodes-intro}
	t^L_j = j h_L
	\quad\text{with}\quad j\in\Set{0}\cup\mathcal{I}^L
	\, .
\end{equation}
The corresponding continuous piecewise-linear functions
$\varphi^L_j$, $j\in\Set{0}\cup\mathcal{I}^L$ are given by the condition
$\varphi^L_j\Par{t^L_{j'}}=\delta_{j j'}$
for all $j,j'\in\Set{0}\cup\mathcal{I}^L$.
These functions form a basis in the finite-element space
$
\widetilde{U}^L = \Span \Set{\varphi^L_j \colon j\in\Set{0}\cup\mathcal{I}^L }
% \, .
$,
whose subspace
$
	U^L
	=
	\Span \Set{\varphi^L_j \colon j\in\mathcal{J}^L }
$
allows to explicitly impose the boundary conditions of the
problem~\eqref{Eq:ProblemEps}.
Similarly,
the corresponding space of
piecewise-constant functions is
$
	\bar{U}^L
	=
	\Span
	\Set{\bar\varphi_i^L \colon i\in\mathcal{I}^L}
$
with
$\bar\varphi_i^L$ with $i\in\mathcal{I}^L$ given by
the condition $\bar\varphi_i^L\Restr{\IntOO{t^L_{i'-1}}{t^L_{i'}}}=\delta_{i i'}$
for all $i,i'\in\mathcal{I}^L$.

To obtain coefficients of finite-element approximations
with respect to these bases,
we will use the \emph{analysis operators}
$
	\varPhi^L
	\colon
	H^{1}\IntOO{0}{1}
	\rightarrow
	\C^{\mathcal{I}^L}
	\simeq
	\C^{2^L}
$
and
$
	\bar\varPhi^L
	\colon
	L^2\IntOO{0}{1}
	\rightarrow
	\C^{\mathcal{I}^L}
	\simeq
	\C^{2^L}
$ 
defined as follows:
for all
$v\in H^{1}\IntOO{0}{1}$, $w\in L^2\IntOO{0}{1}$
and $i\in\mathcal{I}^L$, we set
\begin{equation}\label{Eq:DefOpEF1d-intro}
	\Par{\varPhi^L v}_i
	=
	v \Par{t_i^L}
	\quad\text{and}\quad
	\Par{\bar\varPhi^L w}_i
	=
	2^L
	\int_{ t^L_{i-1} }^{t_i^L} w
	\, .
\end{equation}
Tensorizing the univariate basis functions defined above,
we obtain $d$-variate basis functions
that span the corresponding finite-element spaces:
\begin{equation}\label{eq:fe-spaces-intro}
	V^L
	=
	\bigotimes_{k=1}^{d}
	U^L
	\subset
	V
	\quad\text{and}\quad
	\bar{V}^L
	=
	\bigotimes_{k=1}^{d}
	\bar{U}^L
	\subset
	L^2(D)
	\, .
\end{equation}
Classical approximation bounds (see, e.g.,~\cite{Ciarlet:FEM}) give
\begin{equation}
	\label{eq:VL-approx-intro}
	\inf_{v^L \in V^L} \Norm{v - v^L}_{H^1\Par{D}} \leq C \, 2^{-tL} \, \Norm{v}_{H^{1+t}\Par{D}}
	\quad\text{for all}\quad
	v \in H^{1+t}\Par{D}
	\, ,
\end{equation}
where
$t>0$ is a fractional order of Sobolev smoothness
and $C>0$ is a coefficient that depends on $t$ but not on $L$.

Since the solution $u^\varepsilon$ of~\eqref{Eq:ProblemEps} 
may exhibit algebraic singularities at the boundary of $D$
due to a combination of the domain's geometry, boundary conditions and
diffusion coefficient,
$u^\varepsilon \in H^{1+t}\Par{D}$ may hold only for $t$ significantly less than one.
To efficiently approximate such solutions in low-rank form,
we will follow~\cite{Kazeev:PhD,KS:2017:QTTFE2d,MRS19_872}
in using the multilevel QTT format
for the low-rank separation of the indices associated with different levels and,
for example, not different physical variables.
This consists in applying the isomorphism
\begin{equation}\label{Eq:IndTransp}
	\bigotimes_{k=1}^d
	\bigotimes_{\ell=1}^L
	\C^{2}
	\simeq
	\bigotimes_{\ell=1}^L
	\CuBr[3]{
		\bigotimes_{k=1}^d
	\C^{2}
	}
\end{equation}
so that the $2^{dL}$ degrees of freedom in $V^L$ in \eqref{eq:fe-spaces-intro}
are represented by $d$-indices
corresponding to the $L$ levels of discretization,
each taking $2^d$ values that enumerate the elements of the corresponding factor
marked by curly brackets in~\eqref{Eq:IndTransp}.
To refer to this isomorphism explicitly, we define
$\Ten{\varPi}^L$ with $L\in\N$
as
the permutation matrix of order $2^{d L}$
satisfying
\begin{equation}\label{Eq:IndTranspMatrix}
	\Par{\Ten{\varPi}^L}_{
	\,
		i_{1,1}\,,\ldots,\,i_{d,1}
		, \ldots\ldots ,\,
		i_{1,L}\,,\ldots,\,i_{d,L}
	\;\;\;
		i_{1,1}\,,\ldots,\,i_{1,L}
		, \ldots\ldots ,\,
		i_{d,1}\,,\ldots,\,i_{d,L}
	}
	=
	1
\end{equation}
for all $i_{k\ell}\in\Set{1,2}$ with $k\in\Set{1,\ldots,d}$ and $\ell\in\Set{1,\ldots,L}$.

The elements of $V^L$ and $\bar{V}^L$
can be parametrized
by their coefficients extracted using
the analysis operators
\begin{equation}
	\label{Eq:DefAnOp-intro}
	\varPsi^L
	=
	\Ten{\varPi}^L
	\bigotimes_{k=1}^d
	\varPhi^L
	\colon
	\bigotimes_{k=1}^d
	H^1\IntOO{0}{1}
	\to
	\C^{2^{dL}}
	\quad\text{and}\quad
	\bar\varPsi^L
	=
	\Ten{\varPi}^L
	\bigotimes_{k=1}^d
	\bar\varPhi^L
	\colon
	L^2(D)
	\to
	\C^{2^{dL}}
	\, .
\end{equation}
Note that the restriction of
$\varPsi^L$
to $V^L$
is not surjective.
This lack of surjectivity stems from that we
choose to use nested finite-element spaces
$V^L$ with $L\in\N$ given by~\eqref{eq:fe-spaces-intro}
but represent every function from $V^L$ with $L\in\N$
by $2^{dL}$ values instead of $(2^L-1)^d$, the extra values,
in agreement with the boundary conditions of the problem~\eqref{Eq:ProblemEps}, being zero.

\subsubsection{Discrete multiscale problem and low-rank tensor parametrization}

For every $L\in\N$, we consider the following
discretization of the problem~\eqref{Eq:ProblemEps}:
\begin{equation}\label{Eq:ProblemEpsDiscr}
	\text{find $u^{\varepsilon,L}\in V^L$ such that}
	\quad
	\int_D \Par{\nabla v^L}^\MT A^\varepsilon \, \nabla u^{\varepsilon,L}
	=
	\int_D f v^L
	\quad\text{for all}\quad
	v^L\in V^L
	\, .
\end{equation}
As for the original problem,
Assumption~\ref{As:Coeff} and the Lax--Milgram theorem guarantee that the above discretization has a unique solution.
By the C\'{e}a's lemma, the discrete solution is quasi optimal:
$
	\Norm{u^{\varepsilon}-u^{\varepsilon,L}}_{H^1\Par{D}}
	\leq
	C \, \gamma^{-1} \Gamma \, 2^{-tL} \Norm{u^{\varepsilon}}_{H^{1+t}\Par{D}}
$,
where $C$ is the constant appearing in the approximation bound~\eqref{eq:VL-approx-intro}.

For a tensor $\Vec{u} \in \C^{n_1 \times \cdots \times n_L}$ with $L\in\N$ dimensions and
mode sizes $n_1,\ldots,n_L \in \N$, a representation
\begin{multline}\label{eq:TT-MPS}
	\Vec{u}_{i_1,\ldots,i_L}
	=
	\sum_{\alpha_1}^{r_1} \cdots \sum_{\alpha_{L-1}}^{r_{L-1}}
	U_1(1,i_1,\alpha_1) \,\cdot\, U_2(\alpha_1,i_2,\alpha_2)
	\\
	\cdots
	U_{L-1}(\alpha_{L-2},i_{L-1},\alpha_{L-1}) \,\cdot\, U_L(\alpha_{L-1},i_L,1)
\end{multline}
in terms of arrays
$U_\ell \in \C^{r_{\ell-1} \times n_\ell \times r_\ell}$ with $\ell\in\Set{1,\ldots,L}$,
where we use $r_0 = 1 = r_L$ for convenience, is referred to as
a \emph{tensor train} (TT) decomposition~\cite{Oseledets:2009:TT,Oseledets:2011:TT}
or, alternatively, as a \emph{matrix-product state} (MPS)
representation~\cite{White:1992:Lett69,Vidal:2003,Schollwoeck:2011:DMRG-MPS}.
The arrays $U_k$ with $\ell\in\Set{1,\ldots,L}$ are called
\emph{cores}, and the parameters $r_1,\ldots,r_{L-1}$, governing the number of
entries of the cores, are called \emph{ranks}. In the present paper, we use the
TT-MPS representation as a \emph{multilevel} tensor decomposition~\cite{Tyrtyshnikov:2003:VirtualLevels},
by which we mean that the indices of
a tensor represented as in~\eqref{eq:TT-MPS} represent the scales (not the physical dimensions) of the data.
In the context of the TT-MPS decomposition, this has been known in the literature as the
\emph{quantized tensor train decomposition}~\cite{Oseledets:2010:QTT,Khoromskij:2009:55,Khoromskij:2011:QuanticsApprox,MR3822367}.

%%%%%%%%%%%%%%%%%%%%%%%%%%%%%%%%%%%%%%%%%%%%%%%%%%%%%%%%%%%%%%%%%
\section{Reiterated homogenization and high-dimensional one-scale limit}
\label{sec:UpSc}
%%%%%%%%%%%%%%%%%%%%%%%%%%%%%%%%%%%%%%%%%%%%%%%%%%%%%%%%%%%%%%%%%

For analysis, instead of the original multiscale problem~\eqref{Eq:ProblemEps},
we consider a one-scale high-dimensional limit problem posed in~\eqref{limeqn} in this section.
The limit problem is obtained from the original multiscale problem~\eqref{Eq:ProblemEps}
by homogenization, analyzed for $n=1$, i.e., for a single microscale
in~\cite{BLP1978,Bakhvalov:Homog,Jikov:Homog,Murat:1997:H-convergence,Nguetseng:1989:Homog,Allaire:1992:TwoScaleConv},
and for $n>1$ fast scales by iteration in~\cite{Allaire:1996:ReiteratedHomog}.
For a general discussion, we refer to~\cite{Engquist:2008:AsymptoticNumerical}.
\subsection{One-scale high-dimensional limit problem}
\label{sec:1ScLimPrb}
To formulate reiterated homogenization, we consider
the following assumption, of which Assumption~\ref{As:Coeff}
is a particular case with $i=n$ and $A_n=A$.
\begin{assumption}[on a coefficient $A_i$ with $i\in\Set{0,\ldots,n}$
  microscales, with positive constants $\gamma$ and $\Gamma$]\label{As:Coeff-i}
	$A_i \in L^{\infty}\Par{D  \QQ ; \,   C_{\#} \Par{\bm{Y}_i \QQ ; \, \R^{d \times d}_{\text{\rm sym}} \, }}$
	is essentially bounded and
	uniformly positive definite with constants $\Gamma$ and $\gamma$:
	$
		\gamma
		\leq
		\xi^\MT
		\!
		A_i(x,\bm{y}_i) \,
		\xi
		\leq \Gamma
	$
	for every unit vector $\xi\in\R^d$, a.e. $x\in D$ and all $\bm{y}_i\in \bm{Y}_i$. 
\end{assumption}

For each step $i\in\Set{1,\ldots,n}$ of homogenization, we define
\begin{equation}\label{spacei}
	\begin{aligned}
		V_i
		&=
		L^2\Par[1]{
			D \times \bm{Y}_{i-1},
			H^1_{\#} \Par{Y_i}/_\R
		}
		\simeq
		L^2\Par{D}
		\otimes
		L^2\Par{Y_1} \otimes\cdots\otimes L^2\Par{Y_{i-1}}
		\otimes
		H^1_{\#} \Par{Y_i}/_\R
		\, ,
		\\
		W_i
		&=
		L^\infty\Par{D \times \bm{Y}_{i-1}, H^1_{\#}\Par{Y_i}/_\R }
		\, ,
	\end{aligned}
\end{equation}
and consider the Cartesian-product space
\begin{equation}\label{space}
	\bm{V}_i
	=
	V \times V_1 \times \cdots \times V_i
\end{equation}
endowed with the inner product
$\IProd{ \cdot }{ \cdot }_{ \bm{V}_i }$
given by
\begin{equation}\label{norm}
	\IProd{\boldsymbol{\psi}}{\boldsymbol{\phi}}_{ \bm{V}_i }
	=
	\sum_{\IndNorm{\alpha}=1}
	\IProd{ \partial^\alpha \psi_0 }{ \partial^\alpha \phi_0 }_{L^2(D)}
	+
	\sum_{j=1}^i
	\sum_{|\alpha_j|=1}
	\IProd{ \partial_j^{\alpha_j} \psi_j }{ \partial_j^{\alpha_j} \phi_j }_{ L^2\Par{ D\times \bm{Y}_{ j} } }
\end{equation}
for all
$
	\boldsymbol{\psi} = \Tuple{\psi_0,\psi_1,\ldots,\psi_i},
	\boldsymbol{\phi} = \Tuple{\phi_0,\phi_1,\ldots,\phi_i}
	\in
	\bm{V}_i
$.
We denote the norm
induced by $\IProd{\cdot}{\cdot}_{\bm{V}_i}$
with $\Norm{\cdot}_{ \bm{V}_i }$.
Here and throughout,
the symbol $\partial^\alpha$ with $\alpha\in\N_0^d$
denotes the differentiation of functions
with respect to the first $d$ scalar variables
indicated by the multi-index $\alpha$,
whereas
$\partial_i^{\alpha}$ with $i\in\Set{1,\ldots,n}$ and $\alpha\in\N_0^d$
denotes the differentiation of functions
with respect to the scalar variables $id+1,\ldots,\Par{i+1}d$
according to the multi-index $\alpha$.
Further, we define a bilinear form
$\mathsf{B}_i \!: \, \bm{V}_i \times \bm{V}_i \rightarrow \R$:
\begin{equation}\label{Eq:LimBLF}
	\mathsf{B}_i(\boldsymbol{\psi}, \boldsymbol{\phi})
	=
	\int_{D \times \bm{Y}_i}
	\;
	\Par[2]{
		\nabla \psi_0
		+
		\sum_{j=1}^i \nabla_{\! j} \psi_j
	}^\MT
	A_i
	\,
	\Par[2]{
		\nabla \phi_0
		+
		\sum_{j=1}^i \nabla_{\! j} \phi_j
	}
\end{equation}
for all
$
	\boldsymbol{\psi} = \Tuple{\psi_0,\psi_1,\ldots,\psi_i},
	\boldsymbol{\phi} = \Tuple{\phi_0,\phi_1,\ldots,\phi_i}
	\in
	\bm{V}_i
$,
where $A_i$ is a matrix function satisfying
Assumption~\ref{As:Coeff-i} with $i$ microscales and with positive constants
$\gamma$ and $\Gamma$.
Then the bilinear form $\mathsf{B}_i$ is continuous and coercive:
the inequalities
\begin{eqnarray}\label{BcoerBcont}
	\gamma
	\,
	\Norm{\boldsymbol{\phi}}_{ \bm{V}_i }^2
	\leq
	\mathsf{B}_i(\boldsymbol{\phi},\boldsymbol{\phi})
	\quad\text{and}\quad
	\mathsf{B}_i(\boldsymbol{\psi},\boldsymbol{\phi})
	\leq
	\Gamma
	\,
	\Norm{\boldsymbol{\psi}}_{ \bm{V}_i }
	\Norm{\boldsymbol{\phi}}_{ \bm{V}_i }
\end{eqnarray}
hold for all $\boldsymbol{\psi},\boldsymbol{\phi}\in\bm{V}_i$.
Then, since $f\in L^2\Par{D}$,
the problem of finding $\boldsymbol{u} \in \bm{V}_i$ such that
	\begin{equation}\label{limeqn}
			\mathsf{B}_i(\boldsymbol{u}, \boldsymbol{\phi})
			=
			\int_D f \phi_0
			\quad\text{for all}\quad
			\boldsymbol{\phi} = \Tuple{\phi_0,\phi_1,\ldots,\phi_i} \in \bm{V}_i
	\end{equation}
has a unique solution $\boldsymbol{u} = \Tuple{u_0,u_1,\ldots,u_i}$ (by the Lax--Milgram theorem).
For notational convenience, we introduce
\begin{equation}\label{eq:def-vi}
	v_i = \sum_{j=0}^i \nabla u_j
	\quad\text{with}\quad
	i=1,\ldots,n
	\, .
\end{equation}
We remark that the bilinear forms $\mathsf{B}_i$, $i=1,\ldots,n$,
in \eqref{limeqn} satisfy property~\eqref{BcoerBcont}
with constants uniform with respect to the scale parameter $\varepsilon$.

The problem~\eqref{limeqn} with $i=n$ microscales,
representing the result of $n$ iterations of homogenization
applied to the original multiscale problem~\eqref{Eq:ProblemEps},
approximates the multiscale problem in the following sense.
\begin{theorem}[Theorem~2.11 and equation~(2.9) in~\cite{Allaire:1996:ReiteratedHomog}]
\label{thm:Highdlimit}
	The solution $u^\varepsilon$ of the problem~\eqref{Eq:ProblemEps}
	converges weakly to $u_0$ in $H^1_0(D)$, and
	$\nabla u^\varepsilon$ $(n+1)$-scale converges to $v_n$.
\end{theorem}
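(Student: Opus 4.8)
This is a reiterated-homogenization convergence statement, and I would prove it via $(n+1)$-scale convergence (reiterated two-scale convergence) in the sense of Nguetseng and Allaire. The plan has three stages: extract a convergent subsequence together with the precise multiscale structure of its gradient, pass to the limit in the weak form \eqref{Eq:ProblemEps} against suitably oscillating test functions so as to identify the limit as the solution of \eqref{limeqn}, and finally upgrade subsequential convergence to convergence of the whole family by uniqueness.

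First, Assumption~\ref{As:Coeff} and Lax--Milgram bound $\Norm{u^\varepsilon}_{H^1_0(D)}$ uniformly in $\varepsilon$, so along a subsequence $u^\varepsilon \rightharpoonup u_0$ weakly in $H^1_0(D)$. The essential compactness input is the $(n+1)$-scale gradient-structure theorem: a sequence bounded in $H^1_0(D)$ admits a subsequence whose gradient $(n+1)$-scale converges to a limit of the precise form $\nabla u_0 + \sum_{j=1}^n \nabla_{\! j} u_j$, with correctors $u_j$ lying in the reiterated spaces $V_j$ of \eqref{spacei}; this limit is exactly $v_n$ of \eqref{eq:def-vi}. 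Establishing this decomposition is the heart of the matter and the step I expect to be the main obstacle: it is proved by induction on the number of scales, peeling off one fast variable at a time, and it relies crucially on the asymptotic scale-separation \eqref{eq:scale-sep} to guarantee that the scales genuinely decouple and that no cross-scale interaction survives in the limit.

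Second, I would test \eqref{Eq:ProblemEps} with oscillating functions
\[
	\phi^\varepsilon(x) = \phi_0(x) + \sum_{j=1}^n \varepsilon_j \, \phi_j(x, x/\varepsilon_1, \ldots, x/\varepsilon_j),
\]
where $\phi_0 \in C^\infty_c(D)$ and each $\phi_j$ is smooth and $Y_j$-periodic in its last block of variables. Differentiating and invoking \eqref{eq:scale-sep}, the scaled correctors contribute $\nabla \phi_0 + \sum_{j=1}^n \nabla_{\! j} \phi_j$ to the limiting gradient while the $\varepsilon_j$ prefactors annihilate all lower-order cross terms. Since $A \in L^{\infty}(D; C_{\#}(\bm{Y}_n))$ is continuous in the fast variables, $A^\varepsilon$ is an admissible multiplier for $(n+1)$-scale convergence, so one may pass to the limit in the bilinear expression and obtain
\[
	\int_{D \times \bm{Y}_n} \Par[2]{\nabla \phi_0 + \sum_{j=1}^n \nabla_{\! j} \phi_j}^\MT A_n \, v_n = \int_D f \, \phi_0.
\]
By density of such oscillating test tuples in $\bm{V}_n$, this is precisely the limit variational problem \eqref{limeqn} for $\bm{u} = (u_0, u_1, \ldots, u_n)$.

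Finally, the coercivity and continuity \eqref{BcoerBcont} of $\mathsf{B}_n$ guarantee, via Lax--Milgram, that \eqref{limeqn} has a unique solution; hence the limit $(u_0, \ldots, u_n)$, and thus $v_n$, is independent of the extracted subsequence. A standard subsequence argument then promotes the convergence to the whole family, yielding $u^\varepsilon \rightharpoonup u_0$ in $H^1_0(D)$ and $(n+1)$-scale convergence of $\nabla u^\varepsilon$ to $v_n$. I would note that the unfolding method alluded to in Section~\ref{sec:UpSc} furnishes an equivalent route, replacing the multiscale-convergence compactness by exact unfolding identities together with Rellich compactness on the product domain $D \times \bm{Y}_n$.
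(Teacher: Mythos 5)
Your proposal is correct, but note that the paper does not prove this statement at all: it is imported verbatim as Theorem~2.11 and equation~(2.9) of \cite{Allaire:1996:ReiteratedHomog}, so there is no in-paper argument to compare against. Your outline --- uniform energy bound, the $(n+1)$-scale gradient-compactness theorem under the scale separation \eqref{eq:scale-sep}, passage to the limit against oscillating test functions with admissible multiplier $A\in L^{\infty}(D;C_{\#}(\bm{Y}_n))$, and uniqueness for \eqref{limeqn} via \eqref{BcoerBcont} to promote subsequential to full convergence --- is precisely the classical proof given in that reference.
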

Using the following result, the physical solution $u^\varepsilon$,
including the oscillations induced by the multiscale
structure of the diffusion coefficient~\eqref{eq:Aeps},
can be approximated in terms of the solution of the one-scale
high-dimensional limit problem.
\begin{theorem}[Theorem~2.14 in \cite{Allaire:1996:ReiteratedHomog}]\label{multicorrector}
Assume that the solution $(u,u_1,\ldots,u_n)$ of problem (\ref{limeqn})
is sufficiently smooth, say $u\in C^1(\overline{D})$ and
$u_i\in C^1(\overline{D},C^1_\#(\bm{Y}_i))$
for all $i\in\Set{1,\ldots,n}$.
Then, as $\varepsilon \rightarrow 0$,
\[
u^\varepsilon(x)
\rightarrow
u_0(x) + \sum_{i=1}^n\varepsilon_i u_i\biggl(x,{x\over\varepsilon_1},\ldots,{x\over\varepsilon_i}\biggr)
\quad\text{in}\quad
H^1(D)\;.
\]
\end{theorem}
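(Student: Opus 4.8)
The plan is to prove this corrector result by the classical \emph{energy method}. I would fix the oscillating approximation
\[
	\tilde u^\varepsilon(x)
	=
	u_0(x)
	+
	\sum_{i=1}^n \varepsilon_i\, u_i\Par[1]{x, x/\varepsilon_1, \ldots, x/\varepsilon_i}
	\, ,
\]
which is exactly the right-hand side of the asserted limit, and show that $\Norm{u^\varepsilon - \tilde u^\varepsilon}_{H^1(D)} \to 0$. The first step is to differentiate $\tilde u^\varepsilon$ by the chain rule, separating the slow variable from the fast ones. Writing $\nabla u_i$ for slow-variable gradients and $\nabla_{\!j} u_i$ for fast-variable gradients, one obtains
\[
	\nabla \tilde u^\varepsilon
	=
	\underbrace{\nabla u_0 + \sum_{i=1}^n \nabla_{\!i} u_i}_{=:\, v_n^\varepsilon}
	+
	R^\varepsilon
	\, ,
	\qquad
	R^\varepsilon
	=
	\sum_{i=1}^n \varepsilon_i \nabla u_i
	+
	\sum_{i=1}^n \sum_{j=1}^{i-1} \frac{\varepsilon_i}{\varepsilon_j}\, \nabla_{\!j} u_i
	\, ,
\]
where $v_n^\varepsilon$ denotes the oscillatory realization of the limiting flux $v_n = \nabla u_0 + \sum_{j=1}^n \nabla_{\!j} u_j$ from Theorem~\ref{thm:Highdlimit}. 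The smoothness hypotheses make each $\nabla u_i$ and $\nabla_{\!j} u_i$ bounded, so the diagonal terms $j=i$ survive with coefficient one while, crucially, the scale-separation assumption~\eqref{eq:scale-sep} forces $\varepsilon_i/\varepsilon_j \to 0$ for $j<i$ and $\varepsilon_i \to 0$; hence $\Norm{R^\varepsilon}_{L^\infty(D)} \to 0$ and a fortiori $R^\varepsilon \to 0$ in $L^2(D)^d$.

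Next I would invoke the coercivity of $A^\varepsilon$ (Assumption~\ref{As:Coeff}) to write
\[
	\gamma\, \Seminorm{u^\varepsilon - \tilde u^\varepsilon}_{H^1(D)}^2
	\leq
	\int_D \Par{\nabla u^\varepsilon - \nabla \tilde u^\varepsilon}^\MT A^\varepsilon \Par{\nabla u^\varepsilon - \nabla \tilde u^\varepsilon}
	=
	E_1^\varepsilon - 2 E_2^\varepsilon + E_3^\varepsilon
	\, ,
\]
with $E_1^\varepsilon = \int_D (\nabla u^\varepsilon)^\MT A^\varepsilon \nabla u^\varepsilon$, $E_2^\varepsilon = \int_D (\nabla u^\varepsilon)^\MT A^\varepsilon \nabla \tilde u^\varepsilon$ and $E_3^\varepsilon = \int_D (\nabla \tilde u^\varepsilon)^\MT A^\varepsilon \nabla \tilde u^\varepsilon$, using the symmetry of $A^\varepsilon$. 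For $E_1^\varepsilon$, testing~\eqref{Eq:ProblemEps} with $v=u^\varepsilon$ gives $E_1^\varepsilon = \int_D f u^\varepsilon \to \int_D f u_0$, since $u^\varepsilon \rightharpoonup u_0$ in $H^1_0(D)$ and hence $u^\varepsilon \to u_0$ strongly in $L^2(D)$ by Rellich's theorem. For $E_3^\varepsilon$, substituting $\nabla \tilde u^\varepsilon = v_n^\varepsilon + R^\varepsilon$ and using $A^\varepsilon \le \Gamma$, $\Norm{R^\varepsilon}_{L^2(D)} \to 0$ and the boundedness of $v_n^\varepsilon$ in $L^2(D)^d$, all remainder contributions vanish and $E_3^\varepsilon$ has the same limit as $\int_D (v_n^\varepsilon)^\MT A^\varepsilon v_n^\varepsilon$; since $\boldsymbol{y}_n \mapsto v_n^\MT A\, v_n$ is continuous and $\bm{Y}_n$-periodic and integrable in $x$, the reiterated mean-value lemma for asymptotically separated scales yields $E_3^\varepsilon \to \int_{D\times\bm{Y}_n} v_n^\MT A\, v_n = \mathsf{B}_n(\boldsymbol{u},\boldsymbol{u})$, which equals $\int_D f u_0$ by testing the limit problem~\eqref{limeqn} with $\boldsymbol{\phi}=\boldsymbol{u}$.

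The crux is the cross term $E_2^\varepsilon$, and this is the step I expect to be the main obstacle. Since $\tilde u^\varepsilon$ need not lie in $H^1_0(D)$ --- the correctors generally do not vanish on $\partial D$ --- it cannot be used directly as a test function in~\eqref{Eq:ProblemEps}; instead I would pass to the limit through multiscale convergence. The key observation is that $A^\varepsilon v_n^\varepsilon$ is the oscillatory realization of $A\, v_n \in L^2\Par{D; C_\#(\bm{Y}_n)}^d$, which is an \emph{admissible} test function for $(n+1)$-scale convergence precisely because of the regularity assumed on $A$ and on $(u_0,u_1,\ldots,u_n)$. Writing again $\nabla \tilde u^\varepsilon = v_n^\varepsilon + R^\varepsilon$ and using that $\nabla u^\varepsilon$ is bounded in $L^2(D)^d$ (so that its pairing with $A^\varepsilon R^\varepsilon$ vanishes), the $(n+1)$-scale convergence $\nabla u^\varepsilon \to v_n$ from Theorem~\ref{thm:Highdlimit} tested against $A\, v_n$ gives $E_2^\varepsilon \to \int_{D\times\bm{Y}_n} v_n^\MT A\, v_n = \int_D f u_0$. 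Assembling the three limits yields $\limsup_{\varepsilon\to 0} \gamma\, \Seminorm{u^\varepsilon-\tilde u^\varepsilon}_{H^1(D)}^2 \leq \int_D f u_0 - 2\int_D f u_0 + \int_D f u_0 = 0$, so $\nabla u^\varepsilon - \nabla \tilde u^\varepsilon \to 0$ in $L^2(D)^d$. Finally, since $\tilde u^\varepsilon - u_0 = \sum_{i=1}^n \varepsilon_i u_i(\cdots) \to 0$ uniformly and $u^\varepsilon \to u_0$ in $L^2(D)$, the full $H^1(D)$ convergence follows. The pervasive technical heart is justifying the passage to multiscale means for reiterated, asymptotically separated scales, for which the continuity and periodicity encoded in $C_\#(\bm{Y}_n)$ together with~\eqref{eq:scale-sep} are indispensable.
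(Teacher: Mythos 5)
Your proposal is correct, and it coincides with the proof of record: the paper itself offers no argument for this statement, importing it verbatim as Theorem~2.14 of \cite{Allaire:1996:ReiteratedHomog}, whose proof is exactly your classical energy method --- expand $\int_D (\nabla u^\varepsilon - \nabla \tilde u^\varepsilon)^\MT A^\varepsilon (\nabla u^\varepsilon - \nabla \tilde u^\varepsilon)$, kill the remainder $R^\varepsilon$ via the scale separation~\eqref{eq:scale-sep} and the assumed $C^1$ bounds, evaluate $E_3^\varepsilon$ by the reiterated mean-value lemma, and handle the cross term $E_2^\varepsilon$ by $(n+1)$-scale convergence of $\nabla u^\varepsilon$ against the admissible test function $A\,v_n \in L^2\Par{D;C_\#(\bm{Y}_n)}^d$, all three limits collapsing to $\int_D f u_0$. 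You also correctly identify the two genuinely delicate points (inadmissibility of $\tilde u^\varepsilon$ as a test function in~\eqref{Eq:ProblemEps}, and admissibility of the oscillatory test functions for multiscale means), so no gaps remain.
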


\subsection{Convergence in physical variables for multiple scales.
Unfolding and averaging operators}
\label{sc:CnvPhysVar}
For problems with $n+1 > 2$ scales,
an error estimate in the form~\eqref{corrector} appears not to be available.
We still base the rank bounds for the QTT discretization on the
structure of the one-scale limiting problem. To this end,
generalizing~\cite[Definitions~2.1 and 2.16]{Cioranescu:2008:PeriodicUnfolding}
to the case of multiple microscales,
we introduce unfolding and averaging operators.
\begin{definition}\label{def:unfolding-folding}
	For all $i\in\Set{1,\ldots,n}$,
	the operators
	$
		\mathcal{T}_i^\varepsilon
		\colon
		L^2(D \times Y_{i+1} \times\cdots\times Y_n)
		\to
		L^2(D\times Y_i \times\cdots\times Y_n)
	$
	and
	$
		\mathcal{U}_i^\varepsilon
		\colon
		L^2(D\times Y_i \times\cdots\times Y_n)
		\to
		L^2(D \times Y_{i+1} \times\cdots\times Y_n)
	$
	of unfolding and averaging with respect to the $i$th microscale
	are defined by
	\[
		\Par[1]{\mathcal{T}_i^{\varepsilon} \QQ \phi}
		(x,y_i,y_{i+1},\ldots,y_n)
		=
		\phi\Par[2]{
			\varepsilon_i \SqBr[2]{\frac{x}{\varepsilon_i}}
			+
			\varepsilon_i \QQ y_i,
			y_{i+1},\ldots,y_i
		}
	\]
	for a.e. $(x,y_i,\ldots,y_n)\in D \times Y_i \times\cdots\times Y_n$
	and all $\phi\in L^2(D \times Y_{i+1} \times\cdots\times Y_n)$,
	where $\phi$ is extended by zero outside its domain,
	and
	\[
		\Par{\mathcal{U}^\varepsilon_i \QQ \varPhi} \Par{x,y_{i+1},\ldots,y_n}
		=
		\Abs{Y_i}^{-1}
		\int_{Y_i}
		\varPhi
		\Par[2]{
			\varepsilon_i \SqBr[2]{ \frac{x}{\varepsilon_i} }
			+
			\varepsilon_i \QQ z
			,
			\CuBr[2]{ \frac{x}{\varepsilon_i} },
			y_{i+1},\ldots,y_n
		}
		\d z
	\]
	for a.e. $(x,y_{i+1},\ldots,y_n)\in D \times Y_{i+1} \times\cdots\times Y_n$
	and all $\varPhi\in L^2(D\times \bm{Y}_n)$.

	For every $i\in\Set{1,\ldots,n}$, the
	$n$-microscale unfolding and averaging operators are defined as
	$
		\mathcal{T}^\varepsilon
		=
		\mathcal{T}_1^\varepsilon
		\circ \cdots \circ
		\mathcal{T}_n^\varepsilon
		\colon
		L^2(D)
		\to
		L^2(D\times \bm{Y}_n)
	$
	and
	$
		\mathcal{U}^\varepsilon
		=
		\mathcal{U}_n^\varepsilon
		\circ \cdots \circ
		\mathcal{U}_1^\varepsilon
		\colon
		L^2(D\times \bm{Y}_n)
		\to
		L^2(D)
	$.

\end{definition}
In the case of one microscale,
certain basic properties of the unfolding and averaging operators are analyzed
in~\cite{Cioranescu:2008:PeriodicUnfolding}.
In particular, by~\cite[Proposition~2.17]{Cioranescu:2008:PeriodicUnfolding},
the operator
	$
		\mathcal{U}_i^\varepsilon
		\colon
		L^2(D\times Y_i \times\cdots\times Y_n)
		\to
		L^2(D \times Y_{i+1} \times\cdots\times Y_n)
	$
is continuous and has norm $\Abs{Y_i}^{-1/2}$ for all $i\in\Set{1,\ldots,n}$.
This implies
\begin{equation}\label{eq:avg-op-err}
	\Norm{ \mathcal{U}^\varepsilon \Par{\varPhi -\widetilde{\varPhi} \QQ } }_{L^2(D)}
	\leq
	\Norm{ \varPhi -\widetilde{\varPhi} \QQ }_{L^2(D \times \bm{Y}_n)}
\end{equation}
for all $\varPhi,\widetilde{\varPhi}\in L^2(D \times \bm{Y}_n)$.

As in~\cite{Cioranescu:2008:PeriodicUnfolding}, one can show that
the solution $u^\varepsilon$ of the multiscale problem~\eqref{Eq:ProblemEps}
under the scale-separation condition~\eqref{eq:scale-sep} satisfies
\begin{equation}\label{eq:unfolded-grad-conv}
	\mathcal{T}^\varepsilon
	\nabla u^\varepsilon
	\to
	v_n
	\quad
	\text{strongly in }
	L^2(D\times \bm{Y}_{ n})
	\quad\text{as}\quad
	\varepsilon \to 0
	\, .
\end{equation}

Using the folding operator $\mathcal{U}^\varepsilon$, we can state
an analog of~\eqref{corrector} for several microscales, showing that
the scale-interaction functions $u_1,...,u_n$ in~\eqref{eq:unfolded-grad-conv} describe
to leading order
the
oscillations of the functions $u^\varepsilon$ with $\varepsilon > 0$
as they approach the weak limit
$u^0$.
\begin{lemma}\label{nscalecorrector}
	Under the scale-separation condition~\eqref{eq:scale-sep},
	for the multiscale problem~\eqref{Eq:ProblemEps}
	we have
	$
		\nabla u^\varepsilon
		-
		\mathcal{U}^\varepsilon
		v_n
		\to
		0
	$ 
	strongly in $L^2(D)$ as $\varepsilon \to 0$,
	the averaging operators being applied componentwise.
\end{lemma}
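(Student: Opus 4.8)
The plan is to combine two ingredients: the strong convergence of the unfolded gradient~\eqref{eq:unfolded-grad-conv}, and the contraction/continuity properties of the averaging operator recorded in~\eqref{eq:avg-op-err}. The key observation is that $\mathcal{U}^\varepsilon$ is a left inverse of $\mathcal{T}^\varepsilon$ in a suitable averaged sense, so applying $\mathcal{U}^\varepsilon$ to~\eqref{eq:unfolded-grad-conv} should recover $\nabla u^\varepsilon$ up to an error controlled by the $L^2(D\times\bm{Y}_n)$-distance between $\mathcal{T}^\varepsilon\nabla u^\varepsilon$ and its limit $v_n$.

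First I would establish the algebraic identity $\mathcal{U}^\varepsilon \mathcal{T}^\varepsilon = \id$ on $L^2(D)$ (componentwise), or at least that $\mathcal{U}^\varepsilon \mathcal{T}^\varepsilon \nabla u^\varepsilon = \nabla u^\varepsilon$. This follows from the definitions in Definition~\ref{def:unfolding-folding}: unfolding with respect to the $i$th scale replaces $x$ by $\varepsilon_i\SqBr{x/\varepsilon_i}+\varepsilon_i y_i$ and introduces the fast variable $y_i$, while averaging integrates $y_i$ back out over $Y_i$ after substituting the fractional part $\CuBr{x/\varepsilon_i}$; composing the two returns the original function of $x$ alone. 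Here one uses that $\nabla u^\varepsilon \in L^2(D)$ depends only on the physical variable, so the averaging faithfully reconstructs it. I would verify this scale by scale, exploiting that the composite operators factor as $\mathcal{T}^\varepsilon = \mathcal{T}_1^\varepsilon\circ\cdots\circ\mathcal{T}_n^\varepsilon$ and $\mathcal{U}^\varepsilon = \mathcal{U}_n^\varepsilon\circ\cdots\circ\mathcal{U}_1^\varepsilon$, so that the innermost unfolding is undone by the innermost averaging, and so on outward.

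Granting this identity, I would then write
\[
	\nabla u^\varepsilon - \mathcal{U}^\varepsilon v_n
	=
	\mathcal{U}^\varepsilon \mathcal{T}^\varepsilon \nabla u^\varepsilon - \mathcal{U}^\varepsilon v_n
	=
	\mathcal{U}^\varepsilon \Par[1]{ \mathcal{T}^\varepsilon \nabla u^\varepsilon - v_n }
\]
and apply the bound~\eqref{eq:avg-op-err} (componentwise) to obtain
\[
	\Norm{ \nabla u^\varepsilon - \mathcal{U}^\varepsilon v_n }_{L^2(D)}
	\leq
	\Norm{ \mathcal{T}^\varepsilon \nabla u^\varepsilon - v_n }_{L^2(D \times \bm{Y}_n)}
	\, .
\]
The right-hand side tends to zero as $\varepsilon\to 0$ by the strong convergence~\eqref{eq:unfolded-grad-conv}, which yields the claim. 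The contraction estimate~\eqref{eq:avg-op-err} is exactly what makes the passage clean: since $\mathcal{U}^\varepsilon$ has operator norm at most one on each scale (the norms $\Abs{Y_i}^{-1/2}$ compose against the $\Abs{Y_i}^{1/2}$ factors from $L^2$-normalization, and here $\Abs{Y_i}=1$), no amplification of the error occurs.

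The main obstacle I anticipate is the careful justification of the left-inverse identity $\mathcal{U}^\varepsilon\mathcal{T}^\varepsilon = \id$ in the presence of the zero-extension used in defining $\mathcal{T}_i^\varepsilon$ near $\partial D$. The unfolding operator extends $\phi$ by zero outside $D$, so on the boundary cells of the $\varepsilon_i$-grid the reconstruction $\mathcal{U}^\varepsilon\mathcal{T}^\varepsilon$ will differ from the identity. This introduces a boundary-layer discrepancy supported on a set whose measure is $O(\varepsilon_1)$ (the coarsest microscale), so its $L^2(D)$-contribution vanishes as $\varepsilon\to 0$ provided $\nabla u^\varepsilon$ is bounded in $L^2(D)$ uniformly in $\varepsilon$ — which is guaranteed by the stability bound following~\eqref{Eq:ProblemEps}. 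I would therefore refine the identity to $\mathcal{U}^\varepsilon\mathcal{T}^\varepsilon\nabla u^\varepsilon = \nabla u^\varepsilon + R^\varepsilon$ with $\Norm{R^\varepsilon}_{L^2(D)}\to 0$, absorb $R^\varepsilon$ into the error, and conclude as above. This boundary-layer estimate, rather than the interior algebra, is where the technical care is needed, and it parallels the corresponding argument for a single microscale in~\cite{Cioranescu:2008:PeriodicUnfolding}.
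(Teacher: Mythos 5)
Your route is essentially a reconstruction of the proof behind the reference the paper itself invokes: the paper gives no self-contained argument for Lemma~\ref{nscalecorrector}, but points to \cite[Theorem~6.1 and Remark~7.5]{Cioranescu:2008:PeriodicUnfolding}, and that proof has exactly your skeleton --- the near-inverse identity for $\mathcal{U}^\varepsilon\mathcal{T}^\varepsilon$, the contraction \eqref{eq:avg-op-err}, and the strong convergence \eqref{eq:unfolded-grad-conv}. Your interior algebra is correct: with the zero extension one has $\mathcal{U}^\varepsilon\mathcal{T}^\varepsilon\phi=\phi$ on the union $\widehat{D}_\varepsilon$ of cells entirely contained in $D$ and $=0$ on the boundary layer $\Lambda_\varepsilon = D\setminus\widehat{D}_\varepsilon$, and the scale-by-scale cancellation of the compositions works as you describe.

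There is, however, a genuine gap at the one step you flagged as technical. You dispose of the remainder $R^\varepsilon$, which up to sign is $\nabla u^\varepsilon\,\chi_{\Lambda_\varepsilon}$, by arguing that $|\Lambda_\varepsilon|\to 0$ together with the uniform $L^2(D)$-bound on $\nabla u^\varepsilon$ forces $\|R^\varepsilon\|_{L^2(D)}\to 0$. That implication is false: a family bounded in $L^2$ can concentrate its entire mass on sets of vanishing measure (take $g_\varepsilon=|\Lambda_\varepsilon|^{-1/2}\chi_{\Lambda_\varepsilon}$); uniform boundedness carries no equi-integrability. Worse, hypothesis \eqref{eq:unfolded-grad-conv} cannot help you here, because $\mathcal{T}^\varepsilon$ is, by the zero-extension convention, blind to the values of $\nabla u^\varepsilon$ on $\Lambda_\varepsilon$ --- it acts isometrically on $L^2(\widehat{D}_\varepsilon)$ and discards the boundary cells. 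Indeed, in \cite{Cioranescu:2008:PeriodicUnfolding} the corrector statement is shown to be \emph{equivalent}, given the strong convergence of the unfolded gradients, to $\int_{\Lambda_\varepsilon}\lvert\nabla u^\varepsilon\rvert^2\to 0$, so this step is the entire remaining content of the lemma, not a boundary technicality. The repair requires the PDE and coercivity, which your sketch never uses: testing \eqref{Eq:ProblemEps} with $u^\varepsilon$ gives convergence of the total energy, $\int_D \bigl(\nabla u^\varepsilon\bigr)^\MT A^\varepsilon\,\nabla u^\varepsilon=\int_D f u^\varepsilon\to\int_D f u_0$, which by the limit problem \eqref{limeqn} equals $\int_{D\times\bm{Y}_n} v_n^\MT A\, v_n$; on the other hand, the exact unfolding identity for integrals together with \eqref{eq:unfolded-grad-conv} and the bounded a.e.\ convergence $\mathcal{T}^\varepsilon A^\varepsilon\to A$ (available since $A\in L^\infty(D; C_\#(\bm{Y}_n;\R^{d\times d}_{\text{\rm sym}}))$) yields $\int_{\widehat{D}_\varepsilon}\bigl(\nabla u^\varepsilon\bigr)^\MT A^\varepsilon\,\nabla u^\varepsilon\to\int_{D\times\bm{Y}_n} v_n^\MT A\, v_n$ as well, whence $\gamma\int_{\Lambda_\varepsilon}\lvert\nabla u^\varepsilon\rvert^2\leq\int_{\Lambda_\varepsilon}\bigl(\nabla u^\varepsilon\bigr)^\MT A^\varepsilon\,\nabla u^\varepsilon\to 0$. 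With this energy argument inserted in place of your boundedness claim, your proof closes and coincides with the cited one. (Only in the special dyadic tensor-product setting of Assumption~\ref{ass:Analytic1}, where the cells tile $D=(0,1)^d$ exactly and $\Lambda_\varepsilon$ is empty, would your original argument survive as written; the lemma is stated under \eqref{eq:scale-sep} alone.)
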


For a proof,
we refer to \cite[Theorem~6.1]{Cioranescu:2008:PeriodicUnfolding} for the case $n=1$ of a single microscale
and \cite[Remark~7.5]{Cioranescu:2008:PeriodicUnfolding} regarding the case of $n>1$ microscales.

\begin{remark}
%\rm{
When the unfolded solution $(u_0,u_1,\ldots,u_n)$
consists of infinitely differentiable functions of all variables,
this result can be inferred from the corrector result in Theorem~\ref{multicorrector}.
\end{remark}
\begin{theorem} \cite{VHHCS2004}
Assume $A\in C^{0,1}(D,C^{0,1}_\#(Y_1,\ldots,C^{0,1}_\#(Y_n)\ldots))$
so that in particular $A$ is Lipschitz with respect to each variable,
and is symmetric.
Then the homogenized coefficient $A_0$ is Lipschitz in $D$.

Assume moreover that the physical domain $D$
has a smooth boundary and that $f\in L^2(D)$.
Then the solution $(u_0,u_1,\ldots,u_n)$ of the limit problem~\eqref{limeqn}
satisfies $u_0\in H^2(D)$.
\end{theorem}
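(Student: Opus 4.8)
The plan is to prove the two assertions in turn: first the Lipschitz regularity of the homogenized coefficient $A_0$, and then, granting this, the $H^2$-regularity of the macroscopic component $u_0$ by classical elliptic theory. For the first assertion I would argue by downward induction on $i\in\Set{0,\ldots,n}$, exploiting the fact that reiterated homogenization removes one microscale at a time: the coefficient $A_{i-1}$ on $D\times\bm{Y}_{i-1}$ is obtained from $A_i$ on $D\times\bm{Y}_i$ by solving, for each $k\in\Set{1,\ldots,d}$, the cell problem
\begin{equation*}
	\text{find }
	\chi^k\in H^1_{\#}\Par{Y_i}/_\R
	\text{ such that }
	\int_{Y_i}
	\Par{\nabla_{\! i}\phi}^\MT
	A_i\,\Par{e_k+\nabla_{\! i}\chi^k}
	=0
	\quad\text{for all}\quad
	\phi\in H^1_{\#}\Par{Y_i}/_\R
	\, ,
\end{equation*}
where the parameter $p=(x,\bm{y}_{i-1})$ enters only through $A_i=A_i(p,\cdot)$, and then setting
\begin{equation*}
	\Par{A_{i-1}(p)}_{jk}
	=
	\Abs{Y_i}^{-1}
	\int_{Y_i}
	\Par{e_j+\nabla_{\! i}\chi^j}^\MT
	A_i(p,\cdot)\,\Par{e_k+\nabla_{\! i}\chi^k}
	\, .
\end{equation*}
The inductive hypothesis is that $A_i$ is symmetric, $Y_j$-periodic in $y_j$ for $j\le i$, uniformly elliptic with the same constants $\gamma,\Gamma$, and Lipschitz jointly in $(x,\bm{y}_i)$; the base case $i=n$ is exactly the standing hypothesis on $A=A_n$. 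The inductive step reduces to showing that the corrector, and hence $A_{i-1}$, depends Lipschitz-continuously on $p$.

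To this end I would compare the cell solutions $\chi^k_p,\chi^k_{p'}$ for two parameter values, test the difference of the corresponding cell equations with $\chi^k_p-\chi^k_{p'}$, and use coercivity together with the uniform energy bound $\Norm{e_k+\nabla_{\! i}\chi^k_{p'}}_{L^2(Y_i)}\le C$ (which itself follows from testing the cell problem with its own solution) to obtain
\begin{equation*}
	\Norm{\nabla_{\! i}\chi^k_p-\nabla_{\! i}\chi^k_{p'}}_{L^2(Y_i)}
	\leq
	\gamma^{-1}C\,
	\Norm{A_i(p,\cdot)-A_i(p',\cdot)}_{L^\infty(Y_i)}
	\leq
	C'\,\Abs{p-p'}
	\, ,
\end{equation*}
the last inequality by the inductive Lipschitz hypothesis. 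Inserting this into the defining formula for $A_{i-1}$ and splitting the difference $A_{i-1}(p)-A_{i-1}(p')$ into a term carrying $A_i(p,\cdot)-A_i(p',\cdot)$ and terms carrying corrector differences, each estimated by the displayed bound, yields $\Abs{A_{i-1}(p)-A_{i-1}(p')}\le C''\Abs{p-p'}$. Symmetry, periodicity and the ellipticity bounds $\gamma,\Gamma$ are preserved by the cell-problem construction in the standard way, so $A_{i-1}$ again satisfies the inductive hypothesis; after $n$ steps this gives that $A_0$ is Lipschitz, symmetric and uniformly elliptic on $D$.

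For the second assertion, I would first recall that the first component $u_0$ of the solution of the coupled limit problem~\eqref{limeqn} solves the homogenized scalar equation $-\nabla\cdot(A_0\nabla u_0)=f$ in $D$ with $u_0\in V=H^1_0(D)$: choosing test tuples supported in the macroscopic component and eliminating the correctors through the cell problems decouples~\eqref{limeqn} into this equation, which is precisely the content of reiterated homogenization as recorded in Theorem~\ref{thm:Highdlimit}. Since $A_0$ is symmetric, uniformly elliptic, and—by the first part—Lipschitz on $D$, while $f\in L^2(D)$ and $\partial D$ is smooth, the classical global $H^2$-regularity theory for divergence-form elliptic operators with Lipschitz coefficients on smooth domains applies and gives $u_0\in H^2(D)$.

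I expect the main obstacle to lie entirely in the first part: establishing the Lipschitz dependence of the corrector on the parameter $p$, and, crucially, arranging the induction so that \emph{joint} Lipschitz continuity in $(x,\bm{y}_i)$—rather than in the slow variable $x$ alone—is propagated through all $n$ homogenization steps while the ellipticity constants $\gamma,\Gamma$ do not degrade. Once the effective coefficient $A_0$ is known to be Lipschitz and uniformly elliptic, the $H^2$-regularity of $u_0$ is a routine appeal to standard elliptic regularity.
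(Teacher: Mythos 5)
Your proposal is correct and follows essentially the same route as the cited source: the paper itself states this theorem without proof, deferring to \cite{VHHCS2004}, and your argument --- downward induction with the parameter-perturbation estimate for the cell problems \eqref{Eq:wi}, Lipschitz stability of the upscaled coefficient \eqref{Eq:CoeffUpscaling} (with ellipticity preserved as in \cite[Theorem~3.9]{BLP1978}), followed by classical global $H^2$ elliptic regularity for the decoupled effective problem \eqref{eq:Phom} --- is precisely the standard proof there and is consistent with the paper's own recurrence machinery for the scale-interaction functions. The only cosmetic point is that the decoupling of $u_0$ from \eqref{limeqn} is justified by the uniqueness of the Lax--Milgram solution combined with the recurrence \eqref{eq:uivec2}, rather than by Theorem~\ref{thm:Highdlimit}, which concerns the convergence of $u^\varepsilon$.
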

\subsection{Convergence in physical variables for two scales}
\label{sec:Cnv2scal}
%{
We estimate the error between the solution $u^\varepsilon$ of the physical
problem~\eqref{Eq:ProblemEps}
in terms of the FE approximations of the limit problem (\ref{limeqn}).
We base this on an explicit error estimate between $u^\varepsilon$ and the correctors
for the two scale case ($n=1$).
\begin{proposition}
Assume that
$A\in C^\infty(\overline{D},C^\infty_\#(Y_1))^{d\times d}_{sym}$
and that the homogenized solution $u_0$ belongs to $H^2(D)$.
Then
\begin{equation}\label{corrector}
	\Norm[2]{ u^\varepsilon - \Par[2]{u_0(x) + \varepsilon u_1\Par[2]{x,\frac{x}{\varepsilon} } } }_{H^1(D)}
	\leq
	C \varepsilon^{\frac12}
	\, .
\end{equation}
The constant $C$ is independent of $\varepsilon$ but depends on $u_0$ and $u_1$.
\end{proposition}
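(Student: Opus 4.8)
The plan is to prove the standard first-order corrector estimate for periodic homogenization with the extra half-power loss coming from the boundary layer. I would introduce the corrector error
\[
	r^\varepsilon(x)
	=
	u^\varepsilon(x)
	-
	u_0(x)
	-
	\varepsilon \, u_1\Par[2]{x,\frac{x}{\varepsilon}}
	\, ,
\]
and the goal is to bound $\Seminorm{r^\varepsilon}_{H^1(D)}$, which is equivalent to the full $H^1$-norm by the Poincar\'e inequality once one checks that $r^\varepsilon$ is, up to a controlled error, in $H^1_0(D)$. First I would compute the defect $A^\varepsilon \nabla u^\varepsilon - A^\varepsilon \nabla \Par{u_0 + \varepsilon u_1(\cdot,\cdot/\varepsilon)}$ in the distributional sense. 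Expanding the gradient, $\nabla\SqBr[1]{\varepsilon u_1(x,x/\varepsilon)} = \varepsilon \nabla_x u_1 + \nabla_y u_1$, so the leading-order oscillatory part of the gradient is $\nabla u_0 + \nabla_y u_1(x,x/\varepsilon) = v_1(x,x/\varepsilon)$ in the notation of~\eqref{eq:def-vi}. Testing the weak form~\eqref{Eq:ProblemEps} against $v\in H^1_0(D)$ and subtracting the contribution of the corrector ansatz, the residual should collapse, by the cell problem satisfied by $u_1$ and the homogenized equation satisfied by $u_0$, to terms that are formally of order $\varepsilon$.

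The key algebraic step is to introduce the \emph{flux corrector} (the skew-symmetric matrix potential $B_{jk}$ whose divergence equals $A(\cdot)(\nabla u_0 + \nabla_y u_1) - A_0 \nabla u_0$ at the level of the cell problem). Under the smoothness hypotheses $A\in C^\infty(\overline{D},C^\infty_\#(Y_1))$ and $u_0\in H^2(D)$, this corrector is well-defined and $Y_1$-periodic with the right regularity, and it lets me rewrite the residual flux as a divergence of an $O(\varepsilon)$ quantity. Plugging $v=r^\varepsilon$ into the coercivity inequality from Assumption~\ref{As:Coeff} (the lower bound $\gamma \Seminorm{\cdot}^2 \le \mathsf{B}$) and bounding the right-hand side by Cauchy--Schwarz, the interior contribution is $O(\varepsilon)\Seminorm{r^\varepsilon}_{H^1(D)}$, which after dividing yields an $O(\varepsilon)$ interior bound. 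The $\varepsilon^{1/2}$ rather than $\varepsilon$ enters solely through the boundary layer.

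The hard part is the boundary correction. The ansatz $u_0 + \varepsilon u_1(x,x/\varepsilon)$ does not vanish on $\partial D$ because $u_1(x,x/\varepsilon)$ is generally nonzero there, so $r^\varepsilon \notin H^1_0(D)$ and I cannot test with it directly. The standard remedy is to insert a cutoff $\theta^\varepsilon$ equal to $1$ in the bulk and $0$ in a boundary layer of width $O(\varepsilon)$, replacing $\varepsilon u_1$ by $\varepsilon \theta^\varepsilon u_1$. The error committed by this cutoff is supported in a strip of volume $O(\varepsilon)$; there $\nabla\theta^\varepsilon = O(\varepsilon^{-1})$ while $\varepsilon u_1 = O(\varepsilon)$, so the gradient of the truncation error is $O(1)$ on a set of measure $O(\varepsilon)$, giving an $L^2$-contribution of size $O(\varepsilon^{1/2})$. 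This is exactly the term that dominates and produces the rate $C\varepsilon^{1/2}$ in~\eqref{corrector}, with $C$ depending on $u_0$ and $u_1$ through their $C^1$-norms (equivalently $\Norm{u_0}_{H^2(D)}$ and the regularity of $A$) but not on $\varepsilon$. I would finish by collecting the interior $O(\varepsilon)$ estimate and the boundary-layer $O(\varepsilon^{1/2})$ estimate, the latter being the binding one, and invoking coercivity one final time to close the bound on $\Seminorm{r^\varepsilon}_{H^1(D)}$.
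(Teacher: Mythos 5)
Your outline is, in substance, the classical two-scale corrector proof, and it is worth noting that the paper itself gives no proof of this proposition: it is stated as a known estimate, with the surrounding text deferring to the classical homogenization literature (Bensoussan--Lions--Papanicolaou, Jikov--Kozlov--Oleinik, and the unfolding references cited in Section~3). So there is no in-paper argument to diverge from, and your route --- corrector ansatz $r^\varepsilon$, flux corrector (skew-symmetric potential) to rewrite the residual as a divergence of an $O(\varepsilon)$ quantity, coercivity, and a boundary cutoff $\theta^\varepsilon$ producing the half-power loss --- is exactly the standard one.

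Two points need sharpening before the sketch closes. First, under the stated hypothesis $u_0\in H^2(D)$ \emph{only}, the pointwise bounds ``$\varepsilon u_1=O(\varepsilon)$'' and ``the gradient of the truncation error is $O(1)$'' in the boundary strip are not available: by \eqref{eq:uivec} one has $u_1(x,y)=w_1(x,y)^\MT\,\nabla u_0(x)$, so the $x$-regularity of $u_1$ comes only through $\nabla u_0\in H^1(D)^d$, which need not be bounded. The correct replacement is the strip estimate $\Norm{v}_{L^2(\Sigma_\varepsilon)}\leq C\,\varepsilon^{1/2}\,\Norm{v}_{H^1(D)}$ for the layer $\Sigma_\varepsilon$ of width $O(\varepsilon)$ supporting $\nabla\theta^\varepsilon$, applied with $v=\partial_k u_0$; this yields $\Norm{\varepsilon\,\nabla\theta^\varepsilon\, u_1}_{L^2(D)}\lesssim\Norm{\nabla u_0}_{L^2(\Sigma_\varepsilon)}\lesssim \varepsilon^{1/2}\,\Norm{u_0}_{H^2(D)}$, which is precisely the claimed rate with the claimed dependence of $C$. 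Second, since $A=A(x,y)$ depends on the slow variable, the cell solution $w_1$ and the flux corrector $B$ carry parametric $x$-dependence, so the residual contains additional commutator terms involving $\partial_x w_1$ and $\partial_x B$ evaluated at $(x,x/\varepsilon)$; these are $O(\varepsilon)\Norm{u_0}_{H^2(D)}$ in $L^2$ thanks to $A\in C^\infty(\overline{D},C^\infty_\#(Y_1))$, but they must be written out rather than absorbed silently into the constant-coefficient computation. With these two repairs your argument is complete and matches the classical proof the paper implicitly invokes.
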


\subsection{Recurrence for scale-interaction functions}

Let $i\in\Set{1,\ldots,n}$ and assume that $A_i$
is a matrix function satisfying Assumption~\ref{As:Coeff-i}
with $i$ microscales and positive constants $\gamma$ and $\Gamma$.
Then the limit problem~\eqref{limeqn}, posed on $D \times\bm{Y}_i$,
is well posed and has a unique solution.

Assume that $\xi\in\R^d$ is a unit vector. 
For a.e. $\Tuple{x,\bm{y}_{i-1}}\in D \times \bm{Y}_{ i-1}$,
define a bilinear form
$\mathsf{b}_i\Par{x,\bm{y}_{i-1},\cdot \,,\cdot\,} \!:  H^1_{\#}\Par{Y_i}/_\R \times H^1_{\#}\Par{Y_i}/_\R \rightarrow \R$
and a linear form
$\mathsf{f}_i\Par{x,\bm{y}_{i-1},\xi,\cdot \,} \!:  H^1_{\#}\Par{Y_i}/_\R \rightarrow \R$
as follows:
\begin{equation}\label{eq:bi-fi}
	\begin{aligned}
		\mathsf{b}_i \Par{x,\bm{y}_{i-1},\psi,\phi}
		&=
		\int_{Y_i}
% 		\d y_i
		\;
		\Par{ \nabla \psi}^\MT
		A_i\Par{x,\bm{y}_{i-1}, \cdot \,}
		\,
		\nabla \phi
		\, ,
		\\
		\mathsf{f}_i \Par{x,\bm{y}_{i-1},\xi,\phi}
		&=
		-
		\int_{Y_i}
% 		\d y_i
		\;
		\xi^\MT \!
		A_i\Par{x,\bm{y}_{i-1}, \cdot \,}
		\,
		\nabla \QQ \phi
	\end{aligned}
\end{equation}
for all $\psi,\phi\in H^1_{\#}\Par{Y_i}/_\R$.
Then the following holds
for a.e.
$\Tuple{x,\bm{y}_{i-1}}\in D \times \bm{Y}_{ i-1}$.

First, the assumption regarding $A_i$
results in
the continuity and ellipticity of
$\mathsf{b}_i \Par{x,\bm{y}_{i-1},\cdot \,,\cdot \,}$:
for all $\psi,\phi \in H^1_{\#}\Par{Y_i}/_\R$,
$
	\mathsf{b}_i \Par{x,\bm{y}_{i-1},\psi,\phi}
	\leq
	\Gamma
	\QQ
	\Seminorm{\psi}_{H^1\Par{Y_i} } \Seminorm{\phi}_{H^1\Par{Y_i} }
$
and
$
	\mathsf{b}_i \Par{x,\bm{y}_{i-1},\phi,\phi}
	\geq
	\gamma
	\QQ
	\Norm{\phi}_{H^1_{\#}\Par{Y_i}/_\R}^2
$.
Second, by the same argument, the linear form
$\mathsf{f}_i \Par{x,\bm{y}_{i-1},\xi,\cdot \,}$
is continuous:
\begin{equation}\nonumber
	\Abs[1]{ \mathsf{f}_i \Par{x,\bm{y}_{i-1},\xi,\phi} }
	\leq
	\Gamma
	\QQ
	\Norm{\phi}_{H^1_{\#}\Par{Y_i}/_\R}
	\quad\text{for all}\quad
	\phi \in H^1_{\#}\Par{Y_i}/_\R
	\, .
\end{equation}
By the Lax--Milgram theorem, the problem
of finding
$\mathsf{w}_\xi\Par{x,\bm{y}_{i-1}, \cdot \,} \in H^1_{\#}\Par{Y_i}/_\R$
such that
\begin{equation}\label{Eq:wi}
	\mathsf{b}_i
	\Par{x,\bm{y}_{i-1},
		\mathsf{w}_\xi \Par{x,\bm{y}_{i-1}, \cdot \,}
		,
		\phi
	}
	=
	\mathsf{f}_i
	\Par{x,\bm{y}_{i-1},\xi,\phi}
	\quad\text{for all}\quad
	\phi \in H^1_{\#}\Par{Y_i}/_\R
	\, .
\end{equation}
admits a unique solution, which satisfies
$
	\Norm{\mathsf{w}_\xi \Par{x,\bm{y}_{i-1}, \cdot \,}}_{ H^1_{\#}\Par{Y_i}/_\R }
	\leq
	\gamma^{-1} \QQ \Gamma
$.

Let
$\xi_1,\ldots,\xi_d$ be the columns of the identity matrix $I$ of order $d$.
Being valid for a.e. $\Tuple{x,\bm{y}_{i-1}}\in D \times \bm{Y}_{ i-1}$
and every unit vector $\xi\in\R^d$,
the above argument
defines
$w_i \in W_i^d$
whose components
$w_{i k} \in W_i$ with $k\in\Set{1,\ldots,d}$
are given by
$
	w_{i k} \Par{x,\bm{y}_{i-1},y_i}
	=
	\mathsf{w}_{\xi_k}\Par{x,\bm{y}_{i-1},y_i}
$
for a.e.
$\Tuple{x,\bm{y}_{i-1},y_i} \in D \times \bm{Y}_{ i-1} \times Y_i$
and for each $k\in\Set{1,\ldots,d}$.
Note that $w_i$ is also an element of $V_i^d$. Furthermore,
it is the only element of $V_i^d$ such that
%
% \begin{multline}
\begin{equation}\label{Eq:wiw}
	\int_{D \times \bm{Y}_i}
	\Par[1]{I + \Jac_i w_i
% 	\Par{x,\bm{y}_{i-1},y_i}
	}
	\,
	A_i
% 	\Par{x,\bm{y}_{i-1},y_i}
	\,
	\nabla_{\! i} \QQ \phi
% 	\Par{x,\bm{y}_{i-1},y_i}
	=
	0
\end{equation}
% \end{multline}
%
for all $\phi \in V_i$.
Here, 
$\Jac_i$ denotes the differential operator returning the Jacobi matrix
with respect to the last variable (varying in $Y_i$),
as a function of all variables (taking values in $D \times \bm{Y}_i$).

Since
$A_i \in L^{\infty}\Par{D  \QQ ; \,   C_{\#} \Par{\bm{Y}_i \QQ ; \, \R^{d \times d}_{\text{\rm sym}} \, }}$,
one can define
$A_{i-1} \in L^{\infty}\Par{D  \QQ ; \,   C_{\#} \Par{\bm{Y}_{ i-1} \QQ ; \, \R^{d \times d}_{\text{\rm sym}} \, }}$
by setting
\begin{equation}\label{Eq:CoeffUpscaling}
	A_{i-1}(x,\bm{y}_{i-1})
	\begin{aligned}[t]
		&=
		\int_{Y_i}
	% 	\!\!\!
	% 	\d y_i
	% 	\,
		\Par[1]{I + \Jac_i \QQ  w_i\Par{x,\bm{y}_{i-1}, \cdot \,} }
		A_i(x,\bm{y}_{i-1},\cdot \,)
		\,
		\Par[1]{I + \Jac_i \QQ  w_i\Par{x,\bm{y}_{i-1}, \cdot \,} }^\MT
		\\
		&=
		\int_{Y_i}
	% 	\!\!\!
	% 	\d y_i
	% 	\,
		\Par[1]{I + \Jac_i \QQ  w_i\Par{x,\bm{y}_{i-1}, \cdot \,} }
		A_i(x,\bm{y}_{i-1},\cdot \,)
	\end{aligned}
\end{equation}
for a.e. $x\in D$
and
for all $\bm{y}_{i-1} \in \bm{Y}_{ i-1}$. 
By~\cite[Theorem~3.9]{BLP1978},
the matrix function $A_{i-1}$,
which is called an \emph{upscaled coefficient},
satisfies Assumption~\ref{As:Coeff-i}
with $i$ microscales and with the identical positive constants $\gamma$ and $\Gamma$.
The corresponding problem~\eqref{limeqn}, involving $i$ variables,
is therefore well posed and has a unique solution
$\Tuple{u_0,\ldots,u_{i-1}} \in \bm{V}_{i-1}$.

Since $u_{i-1} \in V_{i-1}$, we have
$\nabla_{\! i-1} \, u_{i-1} \in L^2 \Par{D \times \bm{Y}_{ i-1} }^d$.
On the other hand, we have noted that
$w_i \in W_i^d$,
so we can define $u_i \in V_i$ by setting
\begin{equation}\label{eq:uivec2}
	u_i(x,\bm{y}_{i-1},\cdot \,)
	=
	\Par[1]{ w_i \Par{x,\bm{y}_{i-1},\cdot \,} }^\MT
	\,
	\nabla_{\! i-1} \, u_{i-1} \Par{x,\bm{y}_{i-1}}
	\quad\text{in}\quad
	H^1_{\#}\Par{Y_i}/_\R
\end{equation}
for a.e. $x\in D$ and $\bm{y}_{i-1} \in \bm{Y}_{ i-1}$.
Indeed, this entails that $u_i(x,\bm{y}_{i-1},\cdot \,)$ has the
gradient
\begin{equation}\label{eq:uivec2grad}
	\nabla_{\! i} \QQ u_i(x,\bm{y}_{i-1},\cdot \,)
	=
	\Jac_i w_i \Par{x,\bm{y}_i}
	\,
	\nabla_{\! i-1} \, u_{i-1} \Par{x,\bm{y}_{i-1}}
	\quad\text{in}\quad
	L^2\Par{Y_i}
\end{equation}
for a.e. $x\in D$ and $\bm{y}_{i-1} \in \bm{Y}_{ i-1}$,
so that the bound
$\Norm{u_i}_{V_i}^2 \lesssim \Norm{w_i}_{W_i^d}^2 \, \Norm{ u_{i-1}}_{V_{i-1}}^2$
holds with an equivalence constant determined by the choice of a norm
for $W_i^d$. 
This implies that $\Tuple{u_0,\ldots,u_{i-1},u_i} \in \bm{V}_i$
and, as one verifies using~\eqref{Eq:wiw} and~\eqref{Eq:CoeffUpscaling},
also that this tuple solves the problem~\eqref{limeqn} with $i+1$ variables.

Applying the above argument iteratively,
we obtain the ``effective'' macroscopic diffusion coefficient
$A_0\in L^\infty\Par{D ; \, \R^{d \times d}_{\text{\rm sym}} }$:
\begin{equation}\label{eq:DefA0}
	A_0
	=
	\int_{Y_1}
	\cdots
	\int_{Y_n}
	\Par[1]{I + \Jac_{1} \QQ  w_{1} }
	\cdots
	\Par[1]{I + \Jac_{n} \QQ  w_{n} }
	\,
	A
	\, ,
\end{equation}
which satisfies Assumption~\ref{As:Coeff-i} with zero microscales
and with the identical constants $\gamma$ and $\Gamma$.
The ``effective'' problem for the homogenized limit $u_0$ reads:
find $u_0 \in V_0$
such that for every $\phi \in V_0$
\begin{equation}\label{eq:Phom}
	\int_D
	\,
	\Par{\nabla \phi}^\MT
	A_0
	\,
	\nabla u_0
	=
	\int_D f \phi
	\;.
\end{equation}
Then the solution $\Tuple{u_0,\ldots,u_n}\in\bm{V}_n$
of the limit problem~\eqref{limeqn}
with $n+1$ variables
can be solved using the recursion~\eqref{eq:uivec2},
so that
the scale-interaction functions $u_i$ and the sums of their gradients given by~\eqref{eq:def-vi}
satisfy
\begin{equation}\label{eq:uivec}
	u_i
	=
	w_i^\MT
	v_{i-1}
	\quad\text{and}\quad
	v_i
	=
	\Par[1]{I + \Jac_i w_i }^\MT
	v_{i-1}
	=
	\Par[1]{I + \Jac_i w_i }^\MT
	\cdots
	\Par[1]{I + \Jac_1 w_{1} }^\MT
	\nabla u_0
\end{equation}
in $V_i$ and $L^2(D \times \bm{Y}_i)^d$ respectively.
\subsection{Approximate recurrence for scale-interaction functions}
\label{sc:ApproxRec}
In order to obtain low-rank tensor-structured approximations of
$\Tuple{u_0,u_1,\ldots,u_n} \in \bm{V}_n$,
we use the following approximation scheme
with a discretization parameter $L\in\N$.
For every $i=1,\ldots,n$,
we approximate $w_i$ and $\Jac_i w_i $
by $w_i^L$ and $J_i^L$
in $W_i^d$ and $L^\infty(D \times \bm{Y}_{ i-1}, L^2(Y_i))^{d \times d}$
respectively.
Assuming that $u_0$ and $\nabla u_0$ are approximated by $u_0^L$ and $v^L_0$
in $V$ and $L^2 (D)^d$ respectively,
we follow~\eqref{eq:uivec} to define the corresponding approximations
$u_i^L$ and $v_i^L$ to $u_i$ and $v_i$ with $i\in\Set{1,\ldots,n}$:
in $V_i$ and $L^2(D \times \bm{Y}_i)^d$ respectively, we set
\begin{equation}\label{eq:uvi-approx}
	u_i^L
	=
	\Par[1]{ w_i^L }^\MT
	\!
	v_{i-1}^L
	\quad\text{and}\quad
	v_i^L
	=
	\Par[1]{I + J_i^L }^\MT
	v_{i-1}^L
	=
	\Par[1]{I + J_i^L }^\MT
	\cdots
	\Par[1]{I + J_1^L }^\MT
	\!
	v_0^L
	\, .
% 	\quad\text{in}\quad
% 	V_i
	%
\end{equation}
The associated errors can be represented by telescoping sums:
for example,
\begin{multline}\nonumber
	v_i - v_i^L
	=
		\Par[1]{I + \Jac_i w_i }^\MT
		\cdots
		\Par[1]{I + \Jac_1 w_{1} }^\MT
		\Par[1]{ v_0 - v_0^L}
		\\
		+
		\sum_{j=1}^i
		\CuBr[3]{
			\prod_{m=j+1}^i
			\Par[1]{I + \Jac_m w_m }^\MT
		}
		\,
		\Par[1]{\Jac_j w_j - J_j^L }^\MT
		\,
		\CuBr[3]{
			\prod_{m=1}^{j-1} \Par[1]{I + J_m^L}^\MT
		}
		\,
		v_0^L
\end{multline}
for every $i\in\Set{1,\ldots,n}$,
where sums and products over empty ranges are to be omitted.
Assuming that the errors
$w_i - w_i^L$, $\Jac_i w_i - J_i^L$ and
$v_0 - v_0^L$
are bounded, respectively, in
$W_i^d$, $L^\infty(D \times \bm{Y}_{ i-1}, L^2(Y_i))^{d \times d}$
and $L^2 (D)^d$
uniformly with respect to $L\in\N$ and $i\in\Set{1,\ldots,n}$,
we obtain, with a positive equivalence constant
% $C$
independent of the discretization parameter $L\in\N$, the bounds
\begin{equation}\label{eq:rec-approx-error-grad}
	\Norm[1]{
		v_i - v_i^L
	}_{
		L^2(D \times \bm{Y}_i)^d
	}
% 	\leq
	\lesssim
% 		C
% 		\,
		\Norm{ v_0 - v_0^L }_{L^2 (D)^d}
% 	\\
		+
% 		C
		\sum_{j=1}^i
		\Norm{\Jac_j w_j - J_j^L}_{
			L^\infty(D \times \bm{Y}_{ i-1}, L^2(Y_i))^{d \times d}
		}
\end{equation}
and
\begin{equation}\label{eq:rec-approx-error}
	\Norm{
		u_i - u_i^L
	}_{
		V_i
	}
	\lesssim
% 	\leq
% 		C
		\,
		\Norm{ v_0 - v_0^L }_{L^2 (D)^d}
% 		\\
		+
% 		C
		\sum_{j=1}^{i-1}
		\Norm{\Jac_j w_j - J_j^L}_{
			L^\infty(D \times \bm{Y}_{ i-1}, L^2(Y_i))^{d \times d}
		}
		+
% 		C
		\,
		\Norm{w_i - w_i^L}_{W_i}
% 	\, ,
	%
\end{equation}
for $i\in\Set{1,\ldots,n}$.
In Section~\ref{sec:high-dim-approx}, we construct
particular approximations
$w_i^L$, $J_i^L$, $u_i^L$ and $v_i^L$ with $i\in\Set{1,\ldots,n}$ and $L\in\N$
in the finite-element spaces specified in Section~\ref{sec:FEBasSpc}.

%%%%%%%%%%%%%%%%%%%%%%%%%%%%%%%%%%%%%%%%%%%%%%%%%%%%%%%%%%%%%%%%%%%%%%
\section{Approximability under the assumption of analyticity}
\label{sec:Approx}
%%%%%%%%%%%%%%%%%%%%%%%%%%%%%%%%%%%%%%%%%%%%%%%%%%%%%%%%%%%%%%%%%%%%%%%
In the present section, we investigate
regularity and approximability of $u_0,u_1,\ldots,u_n$.
With the aim of establishing convergence rates and
(quantized) tensor rank bounds which are independent of the scales,
we impose additional assumptions on the data $D$, $A$ and $f$.
Specifically, we consider a \emph{tensor-product physical domain}
and \emph{analytic data}.

The first set of additional assumptions consists in the following.
\begin{assumption}\label{ass:Analytic1}
For every $\varepsilon$ and $i\in\Set{0,1,\ldots,n}$, we have
$\varepsilon_i=2^{-\lambda_i}$ with
$\lambda_i\in\N$ depending on $\varepsilon$
(we set $\lambda_0 \equiv 0$ for notational convenience).
For the physical domain and the unit cells, we have
$D = Y_1=\cdots=Y_n = \IntOO{0}{1}^d$.
The diffusion coefficient
$A$ is analytic and one-periodic with respect to
each of the last $nd$ scalar variables on $\overline{D \times \bm{Y}_{ n}}$.
The right-hand side $f$ is analytic on $\overline{D \times \bm{Y}_{ n}}$.

\end{assumption}

Assumption~\ref{ass:Analytic1} allows to prove that the
solution of the one-scale high-dimensional limiting problem
can be approximated by finite-element functions
of tensor ranks that are logarithmic in accuracy.
This implies that the solution of the
one-scale high-dimensional limiting problem
admits an infinite sequence of approximations that converge exponentially
with respect to the number of parameters used to represent them.

%%%%%%%%%%%%%%%%%%%%%%%%%%%%%%%%%%%%%%%%%%%%%%%%%%%%%%%%%%%%%%%%%%%%%%%%%%%%%%%%%
\subsection{Low-order finite-element approximation}
\label{subsec:FE}
%%%%%%%%%%%%%%%%%%%%%%%%%%%%%%%%%%%%%%%%%%%%%%%%%%%%%%%%%%%%%%%%%%%%%%%%%%%%%%%%%
%
In this section, we extend the construction of
finite-element spaces given in Section~\ref{sc:fesp-intro}
to address the boundary conditions of the
high-dimensional problem~\eqref{limeqn}
and establish main approximation results.
As stated in Assumption~\ref{ass:Analytic1},
We consider the case $D=Y_1=\cdots=Y_n=\IntOO{0}{1}^d$.
%%%%%%%%%%%%%%%%%%%%%%%%%%%%%%%%%%%%%%%%%%%%%%%%%%%%%%%%%%%%%%%%%%%%%%%%%%%5
\subsubsection{Low-order approximation on an interval}
\label{sc:LowOrdApprox}
%%%%%%%%%%%%%%%%%%%%%%%%%%%%%%%%%%%%%%%%%%%%%%%%%%%%%%%%%%%%%%%%%%%%%%%%%%%
%
For $L\in\N$, in order to accommodate the periodic boundary conditions of the
high-dimensional problem~\eqref{limeqn},
we consider the following subspace of $\widetilde{U}^L$:
\begin{equation}\nonumber
	U_{\#}^L
	=
	\Span \Set[1]{\varphi^L_{\# \QQ j} \!:\; j\in\mathcal{I}^L}
	\, ,
\end{equation}
where
$\varphi^L_{\# \QQ j} = \varphi^L_j$
for every
$j\in\mathcal{J}^L$
and
$\varphi^L_{\# \QQ 2^L} = \varphi^L_0 + \varphi^L_{2^L}$.

We will use the \emph{analysis operators}
introduced in \eqref{Eq:DefOpEF1d-intro}
to extract the coefficients of finite-element approximations
in
$U^L$, $\bar{U}^L$ and $U_{\#}^L$.
To construct such approximations, we will use
the following projection operators, 
$
	\pi^L
	\colon
	H^1\IntOO{0}{1}
	\to
	\widetilde{U}^L
$
and
$
	\bar{\pi}^L
	\colon
	L^2\IntOO{0}{1}
	\to
	\bar{U}^L
$.
The first we define as the
operator of continuous, piecewise-linear Lagrange interpolation
at the nodes given in~\eqref{Eq:Part1dNodes-intro},
in the basis of
$\varphi^L_j$ with $j\in\Set{0}\cup\mathcal{I}^L$.
The second operator we define as
the operator of piecewise-constant $L^2$ approximation
associated with the basis functions
$\bar\varphi_i^L$ with $i \in \mathcal{I}^L$,
which are defined in Section~\ref{sc:fesp-intro}.
Note that
$
(\pi^L v)' = \bar{\pi}^L v'
$
for every
$v\in H^1\IntOO{0}{1}$.
Finally, both the projection operators
can be expressed in terms of the analysis operators
defined in~\eqref{Eq:DefOpEF1d-intro}:
for all $u\in H^1_0\IntOO{0}{1}$,
$v\in H^1_{\#}\IntOO{0}{1}$
and $w\in L^2\IntOO{0}{1}$,
we have
\begin{equation}\label{Eq:ReApprOpDef1d}
	\pi^L u
	=
	\sum_{j\in\mathcal{I}^L}
	\Par{\varPhi^L u}_j
	\,
	\varphi^L_j
	\, , \quad
	\pi^L v
	=
	\sum_{j\in\mathcal{I}^L}
	\Par{\varPhi^L v}_j
	\,
	\varphi^L_{\# j}
	\quad\text{and}\quad
	\bar{\pi}^L w
	=
	\sum_{i\in\mathcal{I}^L}
	\Par{\varPhi^L w}_i
	\,
	\bar\varphi_i^L
	\, .
\end{equation}

In the following proposition,
we summarize classical bounds for the projection operators
$\pi^L$ and $\bar{\pi}^L$ for $L\in\N$.

\begin{proposition}\label{Pr:Appr01}
	For all
	$v\in C\IntCC{0}{1} \cap C^2\IntOO{0}{1}$,
	$w\in C\IntCC{0}{1} \cap C^1\IntOO{0}{1}$
	and $L\in\N$,
	the projections $\pi^L v$ and $\bar{\pi}^L w$
	satisfy the error bounds
	\begin{equation}\nonumber
		\begin{gathered}
			\Norm{ v - \pi^L v }_{ L^\infty\IntOO{0}{1} }
			\leq
			2^{-2L-3}
			\,
			\Norm{ v'' }_{ L^\infty\IntOO{0}{1}  }
			\, ,\quad
			\Norm{ (v - \pi^L v)' }_{ L^\infty\IntOO{0}{1} }
			\leq
			2^{-L}
			\,
			\Norm{ v'' }_{ L^\infty\IntOO{0}{1} }
			\, ,
			\\
			\Norm{ w-\bar{\pi}^L w }_{ L^\infty\IntOO{0}{1} }
			\leq
			2^{-L}
			\,
			\Norm{ w' }_{ L^\infty\IntOO{0}{1} }
		\end{gathered}
	\end{equation}
	and the stability bounds
	\begin{equation}\nonumber
		\begin{gathered}
			\Norm{ \pi^L v }_{ L^\infty\IntOO{0}{1} }
			\leq
			\Norm{ v }_{ L^\infty\IntOO{0}{1}  }
			\, ,\quad
			\Norm{ (\pi^L v)' }_{ L^\infty\IntOO{0}{1} }
			\leq
			\Norm{ v' }_{ L^\infty\IntOO{0}{1} }
			\, ,
			\\
			\Norm{ \bar{\pi}^L w }_{ L^\infty\IntOO{0}{1} }
			\leq
			\Norm{ w }_{ L^\infty\IntOO{0}{1} }
			\, .
		\end{gathered}
	\end{equation}

\end{proposition}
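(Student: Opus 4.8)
The plan is to establish each of the five bounds in Proposition~\ref{Pr:Appr01} by reducing the analysis on $\IntOO{0}{1}$ to a single mesh interval $\IntCC{t^L_{i-1}}{t^L_i}$ and then taking the maximum (the supremum norm) over the $2^L$ intervals. On each interval, the projections $\pi^L v$ and $\bar{\pi}^L w$ act locally: $\pi^L v$ is the affine interpolant of $v$ at the two endpoints, and $\bar{\pi}^L w$ is the constant equal to the interval-average of $w$. Since $h_L = 2^{-L}$ is the common length of all intervals, each local estimate will carry exactly the displayed power of $2$, and passing to the global supremum is immediate because $L^\infty\IntOO{0}{1}$ is the pointwise maximum of the local contributions.

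For the interpolation error bounds, I would use the standard remainder formula for linear Lagrange interpolation. On a generic interval $\IntCC{a}{b}$ of length $h_L$, writing $e = v - \pi^L v$, we have $e(a) = e(b) = 0$, and for any interior point the standard estimate gives $\Abs{e} \leq \tfrac18 h_L^2 \Norm{v''}_{L^\infty}$; with $h_L = 2^{-L}$ this yields $2^{-2L}/8 = 2^{-2L-3}$, matching the first bound. For the derivative bound, since $e' = v' - (\pi^L v)'$ and $(\pi^L v)'$ is the slope $(v(b)-v(a))/h_L$, the mean value theorem places this slope equal to $v'(\eta)$ for some $\eta\in\IntOO{a}{b}$; then $\Abs{e'(x)} = \Abs{v'(x) - v'(\eta)} \leq h_L \Norm{v''}_{L^\infty} = 2^{-L}\Norm{v''}_{L^\infty}$, giving the second bound. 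For the piecewise-constant projection, $\bar{\pi}^L w$ on $\IntCC{a}{b}$ equals $h_L^{-1}\int_a^b w$, which by the mean value theorem for integrals equals $w(\zeta)$ for some $\zeta\in\IntOO{a}{b}$; hence $\Abs{w(x) - \bar{\pi}^L w} = \Abs{w(x) - w(\zeta)} \leq h_L \Norm{w'}_{L^\infty} = 2^{-L}\Norm{w'}_{L^\infty}$, which is the third bound.

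For the three stability bounds, the arguments are even more elementary. The interpolant $\pi^L v$ interpolates values of $v$, so by convexity of the affine segment its value lies between $v(a)$ and $v(b)$, giving $\Norm{\pi^L v}_{L^\infty} \leq \Norm{v}_{L^\infty}$. The slope of $\pi^L v$ on $\IntCC{a}{b}$ equals $v'(\eta)$ for some $\eta$ by the mean value theorem, so $\Norm{(\pi^L v)'}_{L^\infty} \leq \Norm{v'}_{L^\infty}$. Finally, the average $h_L^{-1}\int_a^b w$ is bounded in modulus by $\Norm{w}_{L^\infty}$, giving $\Norm{\bar{\pi}^L w}_{L^\infty} \leq \Norm{w}_{L^\infty}$.

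I do not anticipate a serious obstacle here, since all five statements are classical one-dimensional finite-element estimates and the regularity hypotheses ($v\in C\IntCC{0}{1}\cap C^2\IntOO{0}{1}$ and $w\in C\IntCC{0}{1}\cap C^1\IntOO{0}{1}$) are exactly what the remainder formulas and mean value arguments require. The only point demanding minor care is that $v''$ (respectively $w'$) is assumed to exist and be bounded only on the open interval $\IntOO{0}{1}$, so the mean value and interpolation remainder arguments should be applied on each closed subinterval $\IntCC{t^L_{i-1}}{t^L_i}$ with the understanding that continuity up to $\IntCC{0}{1}$ guarantees the endpoint values are well defined, while the second/first derivative bounds are invoked only at interior points. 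Taking the supremum over the finitely many intervals then produces the global $L^\infty\IntOO{0}{1}$ bounds as stated.
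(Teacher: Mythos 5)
Your proof is correct. Note that the paper itself provides no proof of Proposition~\ref{Pr:Appr01}: it explicitly presents these as classical bounds (``we summarize classical bounds for the projection operators''), so there is nothing to compare against beyond the standard textbook argument, which is precisely what you give --- localization to the mesh intervals of length $h_L=2^{-L}$, the linear Lagrange remainder $\lvert e(x)\rvert \leq \tfrac{1}{8}h_L^2\,\Norm{v''}_{L^\infty}$, mean-value arguments for the derivative and averaging bounds, and elementary convexity/averaging for stability. Your closing remark on the regularity hypotheses is the right one to make: since $v''$ and $w'$ are only assumed to exist on the open interval, the Rolle/mean-value steps must be invoked at interior points of each subinterval (and indeed $v\in C\IntCC{0}{1}\cap C^2\IntOO{0}{1}$ does not even force $\Norm{v''}_{L^\infty\IntOO{0}{1}}$ to be finite, in which case the stated bounds hold vacuously), so your handling is complete and the constants $2^{-2L-3}$ and $2^{-L}$ come out exactly as stated.
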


\subsubsection{Low-order approximation on $D \times \bm{Y}_i$}
\label{sec:FEBasSpc}

From the univariate bases defined above,
we obtain by tensorization $d$-variate bases
which span the corresponding finite-element spaces:
\begin{equation}\label{eq:fe-spaces}
	\begin{gathered}
		\widetilde{V}^L
		=
		\bigotimes_{k=1}^{d}
		\widetilde{U}^L
		\subset
		H^{1}(D)
		\, ,\quad
		V^L
		=
		\bigotimes_{k=1}^{d}
		U^L
		=
		\widetilde{V}^L
		\cap
		H^1_0\Par{D}
		\, ,
		\\
		V^L_{\#}
		=
		\bigotimes_{k=1}^{d}
		U_{\#}^L
		=
		\widetilde{V}^L
		\cap
		H^{1}_{\#}(Y)
		\quad\text{and}\quad
		\bar{V}^L
		=
		\bigotimes_{k=1}^{d}
		\bar{U}^L
		\subset
		L^2(D)
		=
		L^2(Y)
	\end{gathered}
\end{equation}
with $L\in\N$.

Using the spaces of $d$-variate finite-element functions specified above, define
\begin{equation}\label{eq:def-i-sp}
	\bar{V}_i^L = \Par[1]{ \bar{V}^L }^{\otimes (i+1)}
	\, ,
	\quad
			\widetilde{V}_i^L
			=
			\bar{V}_{i-1}^L
			\otimes
			\widetilde{V}^L
	\quad\text{and}\quad
			V_{\! \# i}^L
			=
			\bar{V}_{i-1}^L
			\otimes
			V^L_{\#}
\end{equation}
for all $i\in\Set{0,\ldots,n}$ and $L\in\N$.

Further, for all $i\in\Set{0,\ldots,n}$ and $L\in\N$,
to construct approximations by finite-element functions from
$V_{\! \# i}^L$, we will use the operators
$
	\bar\varPi_i^L	\colon
	L^2(D \times \bm{Y}_i)
	\to
	\bar{V}_i^L
$
and
$
	\varPi_i^L
	\colon
	L^2(D \times \bm{Y}_{ i-1})
	\otimes
	\Par[1]{ H^1_{\#}\IntOO{0}{1} }^{\otimes d}
	\to
	V_{\# i}^L
$
given by
\begin{equation}\label{eq-def-proj-op}
	\bar\varPi_i^L
	=
	\bigotimes_{j=0}^i \bigotimes_{k=1}^d
	\bar{\pi}^L
	\quad\text{and}\quad
	\varPi_i^L
	=
	\bar\varPi_{i-1}^L
	\otimes \,
		\bigotimes_{k=1}^d
		\pi^L
	\, .
\end{equation}

The following accuracy bounds for
$\bar\varPi_i^L$ and $\varPi_i^L$
with $i\in\Set{1,\ldots,n}$ and $L\in\N$
can be derived from
Proposition~\ref{Pr:Appr01}.
\begin{lemma}\label{Lm:Approx-ndh}
	Let $i\in\Set{0,\ldots,n}$
	and $\Norm{\cdot}_{\infty}$ denote
	$\Norm{\cdot}_{L^\infty(D \times \bm{Y}_i)}$.
	Assume that
	$v\in C^1(\overline{D \times \bm{Y}_i})$
	and
	$w\in C^3(\overline{D \times \bm{Y}_i})$.
	Then
	the following error bounds hold
	for all $L\in\N$ and $k\in\Set{1,\ldots,d}$:
	\begin{equation}\nonumber
		\begin{aligned}
			\Norm{v - \bar\varPi_i^L v}_{
				\infty%(D \times \bm{Y}_i)
			}
			&\leq
			2^{-L}
			\!
			\sum_{j'=0}^{i}
			\sum_{k'=1}^d
			\Norm{\partial_{j' k'} w}_{
				\infty%(D \times \bm{Y}_i)
			}
			\, ,
			\\
			\Norm{w - \varPi_i^L w}_{
				\infty%(D \times \bm{Y}_i)
			}
			&\leq
			2^{-L}
			\sum_{j'=0}^{i-1}
			\sum_{k'=1}^d
			\Norm{\partial_{j' k'} w}_{
				\infty%(D \times \bm{Y}_i)
			}
			+
			2^{-2L-3}
			\sum_{k'=1}^d
			\Norm{\partial^2_{i k'} w}_{
				\infty%(D \times \bm{Y}_i)
			}
			\, ,
			\\
			\Norm{\partial_{i k} (w - \varPi_i^L w)}_{
				\infty%(D \times \bm{Y}_i)
			}
			&\leq
			2^{-L}
			\!
			\sum_{j'=0}^{i-1}
			\sum_{k'=1}^d
			\Norm{\partial_{j' k'}\partial_{i k} w}_{
				\infty%(D \times \bm{Y}_i)
			}
			+
			2^{-L}
			\sum_{k'=1}^d
			\Norm{\partial^2_{i k'} \partial_{i k} w}_{
				\infty%(D \times \bm{Y}_i)
			}
			\, .
		\end{aligned}
	\end{equation}
\end{lemma}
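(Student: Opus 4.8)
The plan is to reduce all three bounds to the univariate estimates of Proposition~\ref{Pr:Appr01} by a \emph{one-variable-at-a-time telescoping} of the tensor-product projectors, exploiting the fact that the sup-norm makes such operators behave multiplicatively across factors. Throughout I index the $(i+1)d$ ``slots'' of a tensor-product operator by a pair $\mu=(j',k')$ with a scale $j'\in\Set{0,\ldots,i}$ and a coordinate $k'\in\Set{1,\ldots,d}$, and I fix any linear order on these slots.

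First I would record the two structural facts driving the argument. Writing a tensor-product operator as $\bigotimes_\mu P_\mu$, one has the \emph{slicewise} bound $\Norm{\Par[1]{\bigotimes_\mu P_\mu} f}_\infty \le \prod_\mu \Norm{P_\mu} \, \Norm{f}_\infty$, since each factor acts on one variable and may be estimated uniformly with the remaining variables frozen. By the stability bounds of Proposition~\ref{Pr:Appr01}, both $\pi^L$ and $\bar\pi^L$ have $L^\infty$-operator norm at most $1$, so every product of such factors is a contraction. Second I would use the telescoping identity
\[
I - \bigotimes_\mu P_\mu = \sum_\mu \CuBr[2]{\bigotimes_{\nu<\mu} P_\nu} (I - P_{\mu}) \CuBr[2]{\bigotimes_{\nu>\mu} I} \, ,
\]
in which each summand isolates the error of a single factor while the preceding factors (contractions) and the trailing identities leave the sup-norm unchanged. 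Applying $I-P_\mu$ slicewise and invoking the univariate error bounds of Proposition~\ref{Pr:Appr01} — namely $2^{-L}\Norm{\partial_\mu(\cdot)}_\infty$ for a $\bar\pi^L$-slot and $2^{-2L-3}\Norm{\partial^2_\mu(\cdot)}_\infty$ for a $\pi^L$-slot — bounds every summand by the corresponding partial derivative of the argument.

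For the first estimate, $\bar\varPi_i^L$ carries $\bar\pi^L$ in every slot, so the telescoping sum produces exactly one term $2^{-L}\Norm{\partial_{j'k'} v}_\infty$ per slot $(j',k')$, which upon summation yields the claim (with $v\in C^1$ and its first partials). For the second estimate, $\varPi_i^L$ carries $\bar\pi^L$ in the slots of scale $j'<i$ and $\pi^L$ in the $d$ slots of scale $i$; the former contribute the first-derivative terms $2^{-L}\Norm{\partial_{j'k'}w}_\infty$, the latter the second-derivative terms $2^{-2L-3}\Norm{\partial^2_{ik'}w}_\infty$, giving the stated sum.

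The third estimate is the only one requiring an extra idea, namely the commutation relation $(\pi^L v)' = \bar\pi^L v'$ recorded before Proposition~\ref{Pr:Appr01}. Since $\partial_{ik}$ acts only on the $(i,k)$ variable, applying it to $\varPi_i^L w$ converts the $\pi^L$ factor in that slot into $\bar\pi^L$ acting on $\partial_{ik} w$, while all other factors commute with $\partial_{ik}$; hence $\partial_{ik}(w - \varPi_i^L w) = (I - \widetilde Q)\,\partial_{ik} w$, where $\widetilde Q$ is the tensor-product projector agreeing with $\varPi_i^L$ except that its $(i,k)$-slot is $\bar\pi^L$. Telescoping $I-\widetilde Q$ applied to $g=\partial_{ik}w$ then reproduces the claimed bound: the scale-$<i$ slots and the $(i,k)$-slot are $\bar\pi^L$-slots and contribute $2^{-L}\Norm{\partial_{j'k'}\partial_{ik}w}_\infty$ and $2^{-L}\Norm{\partial^2_{ik}w}_\infty$, while the remaining scale-$i$ slots are $\pi^L$-slots contributing $2^{-2L-3}\Norm{\partial^2_{ik'}\partial_{ik}w}_\infty$; absorbing the benign factor $2^{-2L-3}\le 2^{-L}$ collects everything under the single coefficient $2^{-L}$ (here $w\in C^3$ supplies the required third derivatives). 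I expect the main obstacle to be purely organizational: setting up the slicewise $L^\infty$ operator calculus so that the mixed $\bar\pi^L$/$\pi^L$ factors and the commutation step are handled uniformly, and checking that only \emph{preceding} factors — all contractions — multiply each isolated error, so that no constants accumulate across the $(i+1)d$ slots.
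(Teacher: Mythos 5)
Your argument is correct and is essentially the paper's own proof: the appendix likewise represents each error as a telescoping sum over the $(i+1)d$ univariate slots, bounds the already-applied factors by the $L^\infty$-stability estimates of Proposition~\ref{Pr:Appr01}, and bounds each isolated one-dimensional error by the corresponding error estimate of that proposition; your use of the commutation $(\pi^L v)' = \bar{\pi}^L v'$ for the third bound is exactly the mechanism behind the paper's third telescoping identity (the paper inserts $\partial_{ik}$ into the $(i,k)$-slot and then invokes the bound $\Norm{(v-\pi^L v)'}_{L^\infty} \leq 2^{-L}\Norm{v''}_{L^\infty}$, which is the same computation).

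One point deserves explicit attention, however. Your telescoping for the third estimate yields, for the $(i,k)$-slot, the term $2^{-L}\Norm{\partial^2_{ik} w}_{\infty}$ (a \emph{second} derivative), whereas the lemma as printed contains at $k'=k$ the third-order term $2^{-L}\Norm{\partial^2_{ik}\partial_{ik} w}_{\infty}$, so your closing claim that the telescoping ``reproduces the claimed bound'' is not literally accurate. The fault here lies with the printed statement rather than with your derivation: taking $w$ to depend quadratically on the single variable $(i,k)$, every term on the printed right-hand side vanishes while $\partial_{ik}(w-\varPi_i^L w)$ has sup-norm exactly $2^{-L}$, since $\varPi_i^L w$ is then the piecewise-linear interpolant of a quadratic; hence the inequality as printed cannot hold, and the paper's own proof, like yours, actually produces the bound with $2^{-L}\Norm{\partial^2_{ik}w}_{\infty}$ in place of the $k'=k$ term. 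You should state that you prove this (correct) variant instead of asserting agreement with the printed inequality. A minor analogous repair, which you made silently and correctly, is that the right-hand side of the first bound in the statement should involve $v$ rather than $w$.
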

We give a proof of Lemma~\ref{Lm:Approx-ndh} in the Appendix.

For all $i\in\Set{0,\ldots,n}$ and $L\in\N$,
the projections produced by
the operators $\bar\varPi_i^L$ and $\varPi_i^L$, defined
by~\eqref{eq-def-proj-op},
can be parametrized
by the coefficients extracted using
the analysis operators
$
	\bar\varPsi_i^L
	\colon
	L^2(D \times \bm{Y}_i)
	\to
	\C^{2^{(i+1)dL}}
$
and
$
	\varPsi_i^L
	\colon
	L^2(D \times \bm{Y}_{ i-1})
	\otimes \,
	\Par[1]{ H^1\IntOO{0}{1} }^{\otimes d}
	\to
	\C^{2^{(i+1)dL}}
$
given by
\begin{equation}\label{Eq:DefAnOp}
	\bar\varPsi_i^L
	=
	\bigotimes_{j=0}^i
	\Par[1]{
		\Ten{\varPi}^L
		\bigotimes_{k=1}^d
		\bar\varPhi^L
	}
	\quad\text{and}\quad
	\varPsi_i^L
	=
	\bar{\varPhi}_{i-1}^L
	\otimes
	\Par[1]{
		\Ten{\varPi}^L
		\bigotimes_{k=1}^d
		\varPhi^L
	}
	\, .
\end{equation}
Note that while the restrictions of
$\varPsi_i^L$
and
$\bar\varPsi_i^L$
to $V^L_{\# i}$ and $\bar{V}^L_i$
are bijective, 
that of
$\varPsi_i^L$
to $V^L_i$
is only injective.
The lack of bijectivity stems from the fact that we
use nested finite-element spaces~\eqref{eq:fe-spaces}--\eqref{eq:def-i-sp}
and represent every function from $V_i^L$
by $2^{(i+1)dL}$
values associated with a uniform tensor-product grid.
We take into account this lack of bijectivity
in the design of our numerical method.

\subsection{High-order approximation}
\subsubsection{High-order approximation on an interval}
By $\widetilde{T}_\alpha$ with $\alpha\in\Nz$,
we denote the Chebyshev polynomials of the first kind orthogonal
on $\IntOO{0}{1}$:
\begin{equation}\label{Eq:DefCheb}
	\widetilde{T}_\alpha (x)
	=
	\cos \CuBr[1]{n \arccos \Par{2x-1}}
	\quad\text{for all}\quad
	x\in\IntOO{0}{1}
	\quad\text{and}\quad
	\alpha\in\Nz
	\, ,
\end{equation}
so that the orthogonality property
holds
with respect to the weight function
$\omega$ given by
\begin{equation}\label{Eq:ChebWeight}
	\omega(x)
	=
	1/\sqrt{x \, (1-x) \, }
	\quad\text{for all}\quad
	x\in\IntOO{0}{1}
	\, .
\end{equation}
Specifically, we have
\begin{equation}\label{Eq:OrthCheb}
	\IProd{\widetilde{T}_\alpha}{\widetilde{T}_{\alpha'}}_{L^2_\omega \IntOO{0}{1}}
	=
	\int_0^1 \!\! \omega \: \widetilde{T}_\alpha \, \widetilde{T}_{\alpha'}
	=
	\delta_{\alpha \alpha'}
	\,
	\Norm{\widetilde{T}_\alpha}_{L^2_\omega \IntOO{0}{1}}^2
	\quad\text{for all}\quad
	\alpha,\alpha'\in\Nz
	\, ,
\end{equation}
where
$\Norm{\widetilde{T}_0}_{L^2_\omega \IntOO{0}{1}}^2 = \pi$
and
$\Norm{\widetilde{T}_\alpha}_{L^2_\omega \IntOO{0}{1}}^2 = \displaystyle\frac{\pi}{2}$
for all $\alpha\in\N$.

Further, we consider the complex exponentials
$\widehat{T}_{\alpha}$
with $\alpha\in\Z$
defined as follows:
\begin{equation}\label{Eq:DefExp}
	\widehat{T}_{\alpha} \Par{x}
	=
	\exp\Par{ 2 \pi \IU \alpha x}
	\quad\text{for all}\quad
	x\in\IntOO{0}{1}
	\quad\text{and}\quad
	\alpha\in\Z
	\, .
\end{equation}
These are also orthogonal on $\IntOO{0}{1}$:
\begin{equation}\label{Eq:OrthExp}
	\IProd{\widehat{T}_{\alpha}}{\widehat{T}_{\alpha'}}_{L^2 \IntOO{0}{1}}
	=
	\int_0^1 \!\! \widehat{T}_{\alpha}^* \, \widehat{T}_{\alpha'}
	=
	\delta_{\alpha \alpha'}
	\quad\text{for all}\quad
	\alpha,\alpha'\in \Z
	\, .
\end{equation}

We will use the following notation for the spaces
of
univariate algebraic and trigonometric polynomials
of degree at most $p\in\Nz$:
\begin{equation}\label{Eq:DefPSp}
	\mathscr{P}_p
	=
	\Span \Set{\widetilde{T}_\alpha}_{\alpha=0}^p
	\quad\text{and}\quad
	\mathscr{P}_{\# p}
	=
	\Span \Set[1]{\widehat{T}_\alpha}_{\alpha=-p}^p
	\; ,
\end{equation}
where the span is meant with respect to the field $\C$.

We will use polynomial approximations obtained by the following orthogonal
projections onto
$\mathcal{P}_p$ and $\mathcal{P}_{\! \! \# p}$
with $p\in\Nz$:
	\[
		\begin{gathered}
			\pi_p
			=
			\frac{1}{\pi}
			\,
			\widetilde{T}_0
			\,
			\IProd{\widetilde{T}_0}{\cdot \,}_{L^2_\omega\IntOO{0}{1}}
			+
			\frac{2}{\pi}
			\,
			\sum_{ \alpha = 1}^{p}
			\widetilde{T}_{\alpha}
			\,
			\IProd{\widetilde{T}_{\alpha}}{\cdot \,}_{L^2_\omega\IntOO{0}{1}}
			\colon L^2_\omega\IntOO{0}{1} \to \mathscr{P}_p
			\, ,
			\\
			\pi_{\# p}
			=
			\widehat{T}_0
			\,
			\IProd{\widehat{T}_0}{\cdot \,}_{L^2\IntOO{0}{1}}
			+
			\sum_{ \pm\alpha = 1 }^p
			\widehat{T}_{\alpha}
			\,
			\IProd{\widehat{T}_\alpha}{\cdot \,}_{L^2\IntOO{0}{1}}
			\colon L^2\IntOO{0}{1} \to \mathscr{P}_{\# p}
			\, .
		\end{gathered}
	\]

\subsubsection{High-order approximation on $\overline{D \times \bm{Y}_i}$}
\label{sec:HOapprDY}

For every $i\in\Set{1,\ldots,n}$,
denoting by $\mathsf{id}$ the identity transformation of $\C^{id}$,
let us define the following tensor-product operators:
\[
	\varPi_{ i,p}
	=
	\Par[2]{
		\bigotimes_{k=1}^d
		\pi_p
	}
	\otimes
	\Par[2]{
		\bigotimes_{j=1}^i \bigotimes_{k=1}^d
		\pi_{\# p}
	}
	\colon
	L_{\omega^{\otimes d} \otimes \mathsf{id}}^2(D \times \bm{Y}_i)
	\to
	\Par[2]{
		\bigotimes_{k=1}^d \mathscr{P}_p
	}
	\otimes
	\Par[2]{
		\bigotimes_{j=1}^i \bigotimes_{k=1}^d \mathscr{P}_{\# p}
	}
\]
for all $p\in\Nz$.
Here, $\omega$ denotes the weight function in \eqref{Eq:ChebWeight}.
The following lemma verifies that,
when applied to analytic functions, these operators yield
approximations that converge exponentially with respect to $p\in\Nz$.

\begin{lemma}\label{Lm:Approx-ndp}
	Assume that $i\in\Set{1,\ldots,n}$ and $w\in V_i$ is analytic
	and one-periodic with respect to each of the last $id$ scalar variables
	on
	$\overline{D \times \bm{Y}_i}$.
	Let $\epsilon_0 > 0$.
	Then there exist positive constants $C$ and $c$
	such that, for any
	$\epsilon\in\IntOO{0}{\epsilon_0}$
	and for
	$p = \Ceil{ c \QQ \log \epsilon^{-1} }$,
	the following bounds hold
	for all $k\in\Set{1,\ldots,d}$ and $j\in\Set{1,\ldots,i}$:
	\begin{equation}\label{Eq:Approx-nd-bound}
		\begin{gathered}
			\Norm{
				w - \varPi_{ i,p} \, w
			}_{L^\infty\Par{D \times \bm{Y}_i}}
			\leq
			C \QQ \epsilon
			\, ,
			\qquad
			\Norm{
				\partial_{k}
				(w - \varPi_{ i,p} \, w)
			}_{L^\infty\Par{D \times \bm{Y}_i}}
			\leq
			C \QQ \epsilon \QQ p^2
			\, ,
			\\
			\Norm{
				\partial_{jk}
				(w - \varPi_{ i,p} \, w)
			}_{L^\infty\Par{D \times \bm{Y}_i}}
			\leq
			C \, \epsilon \QQ p
			\, .
		\end{gathered}
	\end{equation}
\end{lemma}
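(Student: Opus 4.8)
The plan is to reduce the multivariate estimate to a collection of univariate ones, exploiting the tensor-product structure of $\varPi_{i,p}$, and then to invoke the classical exponential convergence of spectral projections for analytic functions. Since $\varPi_{i,p}=\bigotimes_{m=1}^{M}P_m$ is a tensor product of the univariate orthogonal projections $P_m\in\{\pi_p,\pi_{\#p}\}$ (Chebyshev truncation in each of the $d$ physical variables and Fourier truncation in each of the $id$ periodic fast variables, with $M=(i+1)d$), I would write the error operator as a telescoping sum
\[
	I - \bigotimes_{m=1}^{M} P_m
	=
	\sum_{m=1}^{M}
	\Par[3]{\bigotimes_{l<m} P_l}
	\otimes
	(I - P_m)
	\otimes
	\Par[3]{\bigotimes_{l>m} I}
	\, .
\]
Each summand isolates a single univariate error $I-P_m$, preceded by a product of stable projections. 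Applying a physical derivative $\partial_k$ or a fast derivative $\partial_{jk}$ then strikes exactly one factor, so only one differentiated univariate estimate enters each term.

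The univariate input is as follows. Because $w$ is analytic on the compact set $\overline{D\times\bm{Y}_i}$, it extends holomorphically to a complex neighborhood; viewed as a function of a single physical variable (with the remaining coordinates frozen) it extends to a Bernstein ellipse, so its Chebyshev coefficients decay geometrically, $\Abs{a_\alpha}\lesssim\rho^{-\alpha}$ with $\rho>1$ uniform in the frozen coordinates; viewed as a function of a single fast variable it is one-periodic and extends to a complex strip, so its Fourier coefficients decay like $\Abs{c_\alpha}\lesssim e^{-\sigma\Abs{\alpha}}$ with $\sigma>0$ uniform. Truncating the orthogonal series gives $\Norm{(I-\pi_p)w}_{L^\infty}\lesssim\rho^{-p}$ and $\Norm{(I-\pi_{\#p})w}_{L^\infty}\lesssim e^{-\sigma p}$. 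For derivatives I would differentiate the tails termwise and use $\Norm{\widetilde{T}_\alpha'}_{L^\infty(0,1)}\lesssim\alpha^2$ (Markov) and $\Norm{\widehat{T}_\alpha'}_{L^\infty(0,1)}=2\pi\Abs{\alpha}$ (Bernstein), whence $\sum_{\alpha>p}\alpha^2\rho^{-\alpha}\lesssim p^2\rho^{-p}$ and $\sum_{\Abs{\alpha}>p}\Abs{\alpha}e^{-\sigma\Abs{\alpha}}\lesssim p\,e^{-\sigma p}$. Choosing $p=\Ceil{c\log\epsilon^{-1}}$ with $c$ exceeding both $1/\log\rho$ and $1/\sigma$ forces $\rho^{-p},e^{-\sigma p}\lesssim\epsilon$, which already yields the three scalings $\epsilon$, $\epsilon\,p^2$, $\epsilon\,p$ at the univariate level.

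It remains to control the stable factors in the telescoping. Each of $\pi_p$ and $\pi_{\#p}$ is an $L^2$-orthogonal projection whose $L^\infty\!\to\!L^\infty$ operator norm is the associated Lebesgue constant, which grows only like $\log p$. Since the number $M=(i+1)d$ of factors is fixed, the product of these stability constants is $(C\log p)^{M}$, subexponential in $p$, and can be absorbed into the geometric tails by enlarging $c$ slightly (equivalently, by replacing $\rho$ with a marginally smaller base). Summing the $M$ terms costs only a factor $M$. For $w$ itself every summand is $\lesssim\epsilon$, giving the first bound. When $\partial_k$ is applied it acts on a single Chebyshev direction in each summand—either on the error factor $(I-\pi_p)$, yielding $\partial_k(I-\pi_p)w\lesssim p^2\rho^{-p}$, or on a surviving projection $\pi_p$, where the Markov inequality supplies the extra $p^2$—so every summand is $\lesssim\epsilon\,p^2$. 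Likewise $\partial_{jk}$ with $j\ge 1$ acts on a single Fourier direction and picks up one Bernstein factor $p$, giving $\lesssim\epsilon\,p$.

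The main obstacle I anticipate is precisely this bookkeeping of stability constants and derivative factors in the tensorized telescoping: one must verify that the Lebesgue constants of the orthogonal (truncation) projections grow no faster than polylogarithmically in $p$, that these polylog factors do not interact with the geometric tails to spoil the exponential rate, and that differentiation contributes the claimed \emph{single} power $p^2$ (Chebyshev) or $p$ (Fourier) rather than a compounded one. A secondary point requiring care is the uniformity of the decay rates $\rho$ and $\sigma$ with respect to the frozen coordinates, which I would secure from the joint analyticity of $w$ on the compact domain $\overline{D\times\bm{Y}_i}$.
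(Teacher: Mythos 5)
Your proposal is correct, but it is organized genuinely differently from the paper's proof. The paper never telescopes projection operators: it expands $w$ at once in the full tensor-product orthogonal basis of shifted Chebyshev polynomials and complex exponentials, converts all Chebyshev directions into Fourier directions through the substitution $\mathscr{z}(\zeta)=(1-\cos 2\pi\zeta)/2$, and applies the Cauchy--Goursat theorem a single time on a polydomain of strips to obtain geometric decay of the \emph{joint} multivariate coefficients, $\Abs{c_{\alpha,\beta_1,\ldots,\beta_i}}\leq M_i\,\kappa_\alpha\exp\Par[1]{-\sum_{j}2\pi\delta_{ij}\Abs{\beta_j}}$; the remainder $w-\varPi_{i,p}\,w$ is then bounded by summing absolute values of coefficients over all multi-index patterns in which at least one block exceeds the cutoff, and the derivative bounds come from differentiating this tail termwise using $\Norm{\widetilde{T}_\alpha'}_{L^\infty\IntOO{0}{1}}=2\alpha^2$ and $\Norm{\widehat{T}_\beta'}_{L^\infty\IntOO{0}{1}}=2\pi\Abs{\beta}$ together with elementary weighted geometric-series estimates. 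Because each basis function has unit sup-norm and the coefficients decay geometrically in every index simultaneously, the paper needs no $L^\infty$ stability of the truncation operators at all: the Lebesgue-constant bookkeeping that you single out as the main obstacle of your route simply never arises. Your operator-telescoping approach buys modularity (each univariate projector is analyzed once, and the argument would port to other stable univariate projections), at the price of the $(C\log p)^{(i+1)d}$ stability factor, whose absorption into the geometric tail by enlarging $c$ is indeed legitimate since the polylog is subexponential. Two small points to tighten in your version: first, in the telescoping the differentiated direction can carry a third kind of factor, the identity (summands whose error factor sits in an earlier slot), in which case $\partial_k$ commutes through to $w$ itself and you must apply the \emph{undifferentiated} univariate error bound to $\partial_k w$ — harmless, since $\partial_k w$ is analytic on a marginally smaller neighborhood by Cauchy estimates, but it is a case beyond the two you enumerate; second, the uniformity of the ellipse and strip parameters in the frozen variables, which you correctly promise to extract from joint analyticity on the compact set $\overline{D\times\bm{Y}_i}$, is exactly what the paper makes explicit by fixing the holomorphic extension domain $\mathcal{G}_i$ once and for all.
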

The result is classical; for completeness,
we provide a proof of Lemma~\ref{Lm:Approx-ndp} in the Appendix.

\begin{lemma}\label{Lm:Approx-ndhp}
	Let the assumptions of Lemma~\ref{Lm:Approx-ndp} hold
	and $\Norm{\cdot}_{\infty}$ denote
	$\Norm{\cdot}_{L^\infty(D \times \bm{Y}_i)}$.
	Then there exist positive constants $C$ and $c$
	such that, for any
	$L \in \N$
	and for
	$p = \Ceil{ c \QQ L }$,
	the following bounds hold
	for every $k\in\Set{1,\ldots,d}$:
	\begin{equation}\nonumber%\label{Eq:Approx-ndh-bound}
		\Norm{
			\partial_{i k} \Par{
				w
				-
				\varPi_i^L \varPi_{ i,p}
				\,
				w
			}
		}_{\infty}
		\leq
		C \, p^2 \, 2^{-L}
		\quad\text{and}\quad
		\Norm{
			\partial_{i k} w
			-
			\bar\varPi_i^L
			\partial_{i k}
			\QQ
			\varPi_{ i,p}
			\,
			w
		}_{\infty}
		\leq
		C \, p^2 \, 2^{-L}
		\, .
	\end{equation}
\end{lemma}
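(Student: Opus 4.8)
The plan is to combine the two previously established approximation lemmas—the low-order (mesh-refinement) bounds of Lemma~\ref{Lm:Approx-ndh} and the high-order (spectral) bounds of Lemma~\ref{Lm:Approx-ndp}—by applying them in succession, so that the composed operator $\varPi_i^L \varPi_{i,p}$ inherits a product of their error estimates. The essential point is that $\varPi_{i,p}\,w$ is a \emph{polynomial} (algebraic in the first $d$ variables, trigonometric in the remaining $id$), hence smooth, and therefore a legitimate argument for the low-order bounds of Lemma~\ref{Lm:Approx-ndh}, whose derivative-type estimates require $C^3$ regularity.

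First I would treat the first inequality, controlling $\partial_{ik}(w - \varPi_i^L \varPi_{i,p}\,w)$. I split the error via the triangle inequality into the \emph{spectral part} $\partial_{ik}(w - \varPi_{i,p}\,w)$ and the \emph{mesh part} $\partial_{ik}(\varPi_{i,p}\,w - \varPi_i^L \varPi_{i,p}\,w) = \partial_{ik}\bigl((\id - \varPi_i^L)\,\varPi_{i,p}\,w\bigr)$. For the spectral part, the third bound in~\eqref{Eq:Approx-nd-bound} gives $\Norm{\partial_{jk}(w - \varPi_{i,p}\,w)}_\infty \leq C\,\epsilon\,p$, and with the calibration $p = \Ceil{cL}$ together with the choice $\epsilon \sim 2^{-L}$ (so that $c\log\epsilon^{-1}$ matches $p$), this term is $O(p\,2^{-L})$, which is absorbed into $C\,p^2\,2^{-L}$. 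For the mesh part, I would apply the third estimate of Lemma~\ref{Lm:Approx-ndh} to the smooth function $\varPi_{i,p}\,w$; this produces a factor $2^{-L}$ multiplied by $L^\infty$-norms of third-order mixed derivatives of $\varPi_{i,p}\,w$. The key quantitative input is then the Bernstein/Markov-type inequality: differentiating a degree-$p$ polynomial costs a factor $O(p^2)$ in the sup norm, so the relevant third derivatives of $\varPi_{i,p}\,w$ are bounded by $p^{\,O(1)}$ times $\Norm{w}$-type quantities, uniformly in $p$. Tracking the exact powers and confirming they do not exceed the stated $p^2$ is where the bookkeeping concentrates.

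For the second inequality, which estimates $\partial_{ik}w - \bar\varPi_i^L\,\partial_{ik}\,\varPi_{i,p}\,w$, I would exploit the commutation identity $(\pi^L v)' = \bar\pi^L v'$ recorded just after Proposition~\ref{Pr:Appr01}. Tensorized appropriately, this identity lets me interchange the low-order projection with the differentiation $\partial_{ik}$, so that $\bar\varPi_i^L\,\partial_{ik}\,\varPi_{i,p}\,w = \partial_{ik}\,\varPi_i^L\,\varPi_{i,p}\,w$ up to matching the piecewise-constant and piecewise-linear factors correctly. Once this rewriting is performed, the second bound reduces to the first, and the same split into spectral and mesh contributions applies verbatim.

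The main obstacle I anticipate is the careful alignment of the operator structure under the commutation step, since $\varPi_i^L$ and $\bar\varPi_i^L$ act with different univariate projections ($\pi^L$ versus $\bar\pi^L$) on different tensor factors, and $\partial_{ik}$ differentiates only in the $Y_i$-block of variables. I must verify that $\bar\varPi_i^L$ is precisely the operator obtained from $\varPi_i^L$ by replacing the affected linear factor with its piecewise-constant counterpart, so that the identity $(\pi^L v)' = \bar\pi^L v'$ lifts cleanly to the $d(i{+}1)$-variate setting without introducing extraneous terms. The secondary technical point is making the dependence on $p$ fully explicit in the Bernstein-type bounds for the high-dimensional spectral projection, ensuring the $p^2$ factor is not exceeded; this is routine but requires care to keep the constants independent of $L$ and of the scale index $i$.
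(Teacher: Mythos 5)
Your treatment of the first inequality is sound and is in fact the paper's own argument: split by the triangle inequality into the spectral error $\partial_{ik}(\mathsf{id}-\varPi_{ i,p})\,w$, controlled by Lemma~\ref{Lm:Approx-ndp} with $\epsilon\sim 2^{-L}$ and $p=\Ceil{cL}$, and the mesh error $\partial_{ik}(\mathsf{id}-\varPi_i^L)\,\varPi_{ i,p}\,w$, controlled by applying the third estimate of Lemma~\ref{Lm:Approx-ndh} to the smooth polynomial $\varPi_{ i,p}\,w$ and then removing derivatives by Bernstein's inequality. One bookkeeping point you flag but do not resolve: you must \emph{not} strip all derivatives from $\varPi_{ i,p}\,w$. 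For the term $\Norm{\partial^2_{ik'}\partial_{ik}\,\varPi_{ i,p}\,w}_{\infty}$, applying trigonometric Bernstein three times yields $(2\pi p)^3\Norm{\varPi_{ i,p}\,w}_{\infty}$ and hence an inadmissible $p^3\,2^{-L}$; the paper instead removes only two derivatives, $(2\pi p)^2\Norm{\partial_{ik}\,\varPi_{ i,p}\,w}_{\infty}$, and bounds the remaining first derivative by $\Norm{\partial_{ik}\,w}_{\infty}+C_0\,p\,2^{-L}$ via the already-established spectral bounds, so that the excess power of $p$ is paired with an extra $2^{-L}$ and absorbed (since $p\,2^{-L}$ is bounded). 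With that precaution your first half closes.

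The second inequality is where your plan has a genuine gap. The commutation identity $(\pi^L v)' = \bar{\pi}^L v'$ does \emph{not} lift to the identity $\bar\varPi_i^L\,\partial_{ik}\,\varPi_{ i,p}\,w = \partial_{ik}\,\varPi_i^L\,\varPi_{ i,p}\,w$ when $d\geq 2$. By~\eqref{eq-def-proj-op}, $\varPi_i^L$ applies $\pi^L$ in \emph{all} $d$ coordinates of the $Y_i$ block, so $\partial_{ik}\,\varPi_i^L$ produces $\bar{\pi}^L\,\partial$ in coordinate $k$ but retains the piecewise-linear interpolant $\pi^L$ in the other $d-1$ coordinates of that block; $\bar\varPi_i^L$, by contrast, applies the piecewise-constant projector $\bar{\pi}^L$ in \emph{every} coordinate. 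The two operators therefore differ on $d-1$ tensor factors, and the verification you defer (``matching the piecewise-constant and piecewise-linear factors correctly'') fails except in the case $d=1$; since $k$ ranges over $\Set{1,\ldots,d}$ with $d$ arbitrary, the reduction of the second bound to the first collapses. The remedy is not to commute at all: split directly, $\partial_{ik}\,w - \bar\varPi_i^L\,\partial_{ik}\,\varPi_{ i,p}\,w = \partial_{ik}(\mathsf{id}-\varPi_{ i,p})\,w + (\mathsf{id}-\bar\varPi_i^L)\,\partial_{ik}\,\varPi_{ i,p}\,w$, and estimate the second term with the \emph{first} (piecewise-constant) bound of Lemma~\ref{Lm:Approx-ndh} applied to $v=\partial_{ik}\,\varPi_{ i,p}\,w$, which requires only first derivatives of $v$, i.e.\ mixed second derivatives of $\varPi_{ i,p}\,w$; Bernstein then yields $C\,p^2\,2^{-L}$ exactly as in the first part. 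This is how the paper argues, and it is the natural repair of your proposal.
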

We give a proof of Lemma~\ref{Lm:Approx-ndhp} in the Appendix.

\subsection{Low-rank tensor approximation}
\label{sc:LowRkTnsAppr}
In this section, for $i\in\Set{1,\ldots,n}$ and $L\in\N$, we consider
\begin{equation}\label{Eq:wi-pol-approx}
	w_i^L = \Par{ \varPi_i^L \varPi_{ i,p_L} w_{ik}}_{k=1}^d \in V_i^L
	\quad\text{and}\quad
	J_i^L  = \Par{ \bar\varPi_i^L \partial_{i k} \QQ \varPi_{ i,p_L} w_{i k'}}_{k',k=1}^d \in \Par[1]{ \QQ \bar{V}_i^L \QQ}^{d \times d}
\end{equation}
with a suitable $p_L \in \N$
as approximations to $w_i$ and $\Jac_i w_i$, where
$w_i$ is the solution of~\eqref{Eq:wiw}.
Then the approximation scheme~\eqref{eq:uvi-approx}
produces $u_i \in V_i^L$ and $v_i \in \Par[1]{ \QQ \bar{V}_i^L \QQ}^d$.

Section~\ref{sec:approx-acc} relates the error of
the approximation scheme~\eqref{eq:uvi-approx},
bounded by~\eqref{eq:rec-approx-error-grad}--\eqref{eq:rec-approx-error},
to the error of $w_i^L$ and $J_i^L$ as approximations to $w_i$ and $\Jac_i w_i$
for all $i\in\Set{1,\ldots,n}$ and $L\in\N$.

In Section~\ref{sec:approx-aux},
the error bounds proved in  Section~\ref{sec:approx-acc} are
followed by a quantized tensor-rank analysis,
which is based on auxiliary definitions and
rank bounds which are also provided in Section~\ref{sec:approx-aux}.

The analysis is based
on the following assumption regarding the approximation $v_0^L \in \bar{V}^L$
to $v_0$.
\begin{assumption}\label{ass:u0-low-rank}
	For all $L\in\N$,
	the subspace
	$\mathscr{S}^L = \Span\Set{\bar\varPsi_0^L v_{0,k}^L}_{k=1}^d$
	satisfies the following with some rank $r_L\in\N$:
	for every $\ell\in\Set{1,\ldots,L-1}$,
	there exist subspaces
	$\mathscr{L}^L_\ell\subset\R^{2^{d\ell}}$ and $\mathscr{M}^L_\ell\subset\R^{2^{d(L-\ell)}}$
	of dimensions at most $r_L$
	such that
	$\mathscr{S}^L \subset \mathscr{L}^L_\ell \otimes \mathscr{M}^L_\ell$.
\end{assumption}

The purpose of Sections~\ref{sec:high-dim-approx} and~\ref{sec:multiscale-approx}
is to bound the tensor ranks of
the coefficients of $u_1^L,\ldots,u_n^L$ and $\mathcal{U}^\varepsilon v_n^L$ 
in terms of both $L\in\N$ and $r_L\in\N$.
Finally, in Section~\ref{sec:RegAnSingFct}, we restrict the setting to the case of $d=2$
physical dimensions and invoke a result from~\cite{Kazeev:PhD,KS:2017:QTTFE2d}.
We remark that corresponding results
in space dimension $d=3$ are also available in \cite{MRS19_872}.
This gives simultaneous bounds on $r_L$ and on the errors
$\Norm{u_0 - u_0^L}_{L^2(D)}$
and
$\Norm{v_0 - v_0^L}_{L^2(D)^d}$
for every $L\in\N$,
which lead to an analogous result for 
$\bar{\varPsi}_{\! \star n}^L \, \mathcal{U}^\varepsilon v_n^L$.
\subsubsection{Accuracy of the approximation scheme}\label{sec:approx-acc}
Under Assumption~\ref{ass:Analytic1},
differentiating the equation expressing
the cell problem~\eqref{Eq:wiw} in the strong form
iteratively for $i=n,\ldots,1$,
one verifies that the solutions $w_i \in V_i^d$
with $i\in\Set{1,\ldots,n}$
satisfy the assumption of Lemma~\eqref{Lm:Approx-ndp}.
This gives that, 
with a positive constant $c$,
for any
$L \in \Nz$
and for
\begin{equation}\label{eq:wi-pol-degree}
	p_L = \Ceil{ c \QQ L }
	\, ,
\end{equation}
the approximations $w_i^L$ and $J_i^L$ defined by~\eqref{Eq:wi-pol-approx}
satisfy the error bounds
\begin{equation}\nonumber
		\Norm{ w_i - w_i^L }_{W_i}
% 		\leq
	\lesssim
% 		C \,
		L^2 \, 2^{-L}
		\quad\text{and}\quad
		\Norm{
			\Jac_i w_i - J_i^L
		}_{
			L^\infty(D \times \bm{Y}_{ i-1}, L^2(Y_i))^{d \times d}
		}
% 		\leq
	\lesssim
% 		C \,
		L^2 \, 2^{-L}
% 		\, .
	%
\end{equation}
The equivalence holds with a positive constant
% $C'$
that is independent of $L\in\Nz$ and $i\in\Set{1,\ldots,n}$.
Then the bounds~\eqref{eq:rec-approx-error-grad}--\eqref{eq:rec-approx-error}
for the approximation scheme~\eqref{eq:uvi-approx} show that the resulting approximations
$u_i^L\in V_i$ and $v_i^L\in L^2(D \times \bm{Y}_i)^d$
satisfy the bounds
\begin{equation}\label{eq:rec-approx-error-exp}
	\Norm{
		u_i - u_i^L
	}_{
		V_i
	}
	\, , \,
	\Norm[1]{ v_i - v_i^L }_{ L^2(D \times \bm{Y}_i)^d }
	\lesssim
% 	\leq
% 	C'
% 	\,
	\Norm{ v_0 - v_0^L }_{L^2 (D)^d} + L^2 \, 2^{-L}
\end{equation}
with a positive equivalence constant
% $C'$
independent of $L\in\N$ and $i\in\Set{1,\ldots,n}$.

\subsubsection{Auxiliary subspaces and results}\label{sec:approx-aux}

For all $L\in\Nz$ and $p\in\Nz$,
we will use the following notation for the sets of tensors obtained by evaluating
$d$-variate
algebraic and trigonometric polynomials
of maximum degree at most $p$
on a uniform tensor-product grid with $2^L$ nodes in each variable:
\begin{equation}\label{Eq:DefPolCoeffSp}
	\mathcal{P}_p^{L,d}
	=
	\Ten{\varPi}^L
	\bigotimes_{k=1}^d
	\varPsi^L
	\QQ
	\mathscr{P}_p
	\subset
	\C^{2^{dL}}
	\quad\text{and}\quad
	\mathcal{P}_{\! \! \# p}^{L,d}
	=
	\Ten{\varPi}^L
	\bigotimes_{k=1}^d
	\bar\varPsi^L
	\QQ
	\mathscr{P}_{\# p}
	\subset
	\C^{2^{dL}}
	\, .
\end{equation}

Let us extend~\eqref{Eq:DefPSp}
and~\eqref{Eq:DefPolCoeffSp}
%--\eqref{Eq:DefPolCoeffSp-d}
by introducing,
for all $p\in\Nz$, $L\in\N$ and $\lambda\in\Z$,
\begin{equation}\label{Eq:DefPSp-Coeff-Trig-ell}
	\mathscr{P}_{\# p,\lambda}
	=
	\Span \Set[1]{\widehat{T}_\alpha (2^{\lambda} \,\cdot\, ) }_{\alpha=-p}^p
	\quad\text{and}\quad
	\mathcal{P}_{\! \! \# p,\lambda}^{L,d}
	=
	\Ten{\varPi}^L
	\bigotimes_{k=1}^d
	\bar\varPsi^L
	\,
	\mathscr{P}_{\# p,\lambda}
	\subset
	\C^{2^{dL}}
	\, .
\end{equation}

We will use several results, stated below,
to analyze the low-rank structure of the approximations
$u_i^L\in V_i$ and $v_{i}^L\in L^2(D \times \bm{Y}_i)^d$
with
$i\in\Set{1,\ldots,n}$ and $L\in\N$,
defined by
\eqref{eq:uvi-approx} and~\eqref{Eq:wi-pol-approx},
as elements of the respective spaces
$\mathscr{Q}_i^L$ with $i\in\Set{1,\ldots,n}$ and $L\in\N$,
given by~\eqref{eq:ui-approx-Space}.
\begin{proposition}\label{Pr:AlgPolTensStruct}
	For all $p\in\Nz$, $L\in\N$ and $\ell\in\Set{1,\ldots,L-1}$,
	we have
	$
		\mathcal{P}_p^{L,d}
		\subset
		\mathcal{P}_p^{\ell,d}
		\otimes
		\mathcal{P}_p^{L-\ell,d}
	$.
\end{proposition}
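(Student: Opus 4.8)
The plan is to reduce the claim to the univariate case and to settle that case by the elementary polynomial \emph{addition theorem}. Throughout I identify $\C^{2^{dL}}$ with $\bigotimes_{\ell=1}^{L}\C^{2^d}$ as prescribed by the permutation $\Ten{\varPi}^L$ of~\eqref{Eq:IndTranspMatrix}, so that the $\ell$th factor collects the bits $i_{1,\ell},\ldots,i_{d,\ell}$ at level $\ell$ of all $d$ physical variables, level $1$ being the coarsest (the opposite convention is handled identically, since the two blocks below play symmetric roles). Under this identification, the splitting $\C^{2^{dL}}\simeq\C^{2^{d\ell}}\otimes\C^{2^{d(L-\ell)}}$ appearing on the right-hand side separates the coarse levels $1,\ldots,\ell$ from the fine levels $\ell+1,\ldots,L$. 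Since $\mathcal{P}_p^{L,d}$ is spanned by the grid samples of products $\prod_{k=1}^{d}q_k(x_k)$ with each $q_k\in\mathscr{P}_p$, by linearity it is enough to place the sample of one such product in $\mathcal{P}_p^{\ell,d}\otimes\mathcal{P}_p^{L-\ell,d}$.

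First I would treat a single factor. The coordinate of a level-$L$ node splits as $t=t_{\mathrm c}+2^{-\ell}t_{\mathrm f}$, where $t_{\mathrm c}$ runs over the level-$\ell$ grid (the bits $i_{k,1},\ldots,i_{k,\ell}$) and $t_{\mathrm f}$ over the level-$(L-\ell)$ grid (the bits $i_{k,\ell+1},\ldots,i_{k,L}$). Since $q_k$ has degree at most $p$, its Taylor expansion about $t_{\mathrm c}$ terminates and is exact,
\[
  q_k\Par{t_{\mathrm c}+2^{-\ell}t_{\mathrm f}}
  =
  \sum_{m=0}^{p}
  \frac{2^{-\ell m}}{m!}\,
  q_k^{(m)}\Par{t_{\mathrm c}}\,
  t_{\mathrm f}^{\,m}
  ,
\]
which exhibits $q_k(x+y)$ as an element of $\mathscr{P}_p\otimes\mathscr{P}_p$, since each summand is a polynomial of degree at most $p$ in $t_{\mathrm c}$ and at most $p$ in $t_{\mathrm f}$, the factor $2^{-\ell m}$ not affecting degrees. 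Sampling $q_k^{(m)}(t_{\mathrm c})$ on the level-$\ell$ grid hence yields a vector $c_{k,m}\in\mathcal{P}_p^{\ell,1}$, while sampling $t_{\mathrm f}^{\,m}$ on the level-$(L-\ell)$ grid yields $f_{k,m}\in\mathcal{P}_p^{L-\ell,1}$, so that the sample of $q_k$ equals $\sum_{m=0}^{p}c_{k,m}\otimes f_{k,m}$. The discrepancy between the node convention $t_i^L=i\,2^{-L}$ built into $\varPhi^L$ and the splitting above is a shift $q_k(\,\cdot\,+2^{-L})$, which leaves $\mathscr{P}_p$ invariant and is therefore immaterial.

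Finally I would reassemble the $d$ variables. Multiplying the univariate expansions,
\[
  \prod_{k=1}^{d}q_k
  =
  \sum_{m_1,\ldots,m_d}
  \Par[3]{\prod_{k=1}^{d}c_{k,m_k}}
  \otimes
  \Par[3]{\prod_{k=1}^{d}f_{k,m_k}}
  ,
\]
and after reordering by $\Ten{\varPi}^L$ the coarse vectors $c_{k,m_k}$ (levels $1,\ldots,\ell$, all variables) combine into the sample of a $d$-variate polynomial of coordinate-degree at most $p$ on the level-$\ell$ grid, i.e.\ an element of $\mathcal{P}_p^{\ell,d}$, and likewise the fine vectors combine into an element of $\mathcal{P}_p^{L-\ell,d}$. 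Thus each of the $(p+1)^d$ summands lies in $\mathcal{P}_p^{\ell,d}\otimes\mathcal{P}_p^{L-\ell,d}$, hence so does $\prod_k q_k$, and by linearity $\mathcal{P}_p^{L,d}\subset\mathcal{P}_p^{\ell,d}\otimes\mathcal{P}_p^{L-\ell,d}$. The one point demanding care is the combinatorial bookkeeping: one must verify that the coarse/fine separation of bits carried out by $\Ten{\varPi}^L$ matches exactly the coordinate splitting $t=t_{\mathrm c}+2^{-\ell}t_{\mathrm f}$, up to the harmless index shift; this, rather than the elementary addition theorem, is where I expect the main effort to lie.
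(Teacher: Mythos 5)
Your proof is correct and is essentially the paper's own argument: the paper settles the claim in one line by ``the binomial formula applied to the standard basis of monomials,'' which is exactly your terminating Taylor expansion $q_k(t_{\mathrm c}+2^{-\ell}t_{\mathrm f})=\sum_{m=0}^{p}\frac{2^{-\ell m}}{m!}\,q_k^{(m)}(t_{\mathrm c})\,t_{\mathrm f}^{\,m}$ together with the coarse/fine level splitting that the paper leaves implicit. (One harmless slip: with the node convention $t_i^L=i\,2^{-L}$ the splitting $i=(i_{\mathrm c}-1)2^{L-\ell}+i_{\mathrm f}$ incurs a shift of $2^{-\ell}$ in the coarse variable rather than $2^{-L}$, but, as you yourself note, any shift leaves $\mathscr{P}_p$ invariant, so the argument is unaffected.)
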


The embedding stated in Proposition~\ref{Pr:AlgPolTensStruct} means the following:
for every $u\in \mathcal{P}_p^{L,d} \subset \C^{2^{dL}}$, there exist
$u'\in \mathcal{P}_p^{\ell,d} \subset \C^{2^{d\ell}}$
and
$u''\in \mathcal{P}_p^{L-\ell,d} \subset \C^{2^{d(L-\ell)}}$
such that $u = u' \otimes u''$ in the sense of the Kronecker product of vectors (tensors).
The proof follows trivially from the binomial formula applied to the standard
basis of monomials.

An immediate consequence of Proposition~\ref{Pr:AlgPolTensStruct}
is that
the tensor of the values of any $d$-variate polynomial
of maximum degree at most $p\in\N$
at any tensor-product uniform grid with $2^L$ entries in each dimension
can be represented in the multilevel TT-MPS format
with the transposition~\eqref{Eq:IndTranspMatrix}
with ranks not exceeding $(p+1)^d$.
This was originally shown,
in the case of $d=1$, in~\cite[Corollary~13]{Grasedyck:2010:HTens}.
The language of space factorization,
which we adopt for Assumption~\ref{ass:u0-low-rank}, Proposition~\ref{Pr:AlgPolTensStruct}
and for the whole section,
is different from that of~\cite{Oseledets:2009:QTT:Dokl,Oseledets:2010:QTT,Khoromskij:2011:QuanticsApprox,Grasedyck:2010:HTens};
we use it here to mostly avoid lengthy expressions with numerous indices
associated with nodes of tensor-product grids.

The additional notation~\eqref{Eq:DefPSp-Coeff-Trig-ell}
allows to state the following analog
of Proposition~\ref{Pr:AlgPolTensStruct} for trigonometric polynomials,
which is an immediate consequence of the
separability of the exponential function.
\begin{proposition}\label{Pr:TrigPolTensStruct}
	For all $p\in\Nz$, $L\in\N$
% 	, $\lambda\in\Z$
	and $\ell\in\Set{1,\ldots,L-1}$,
	we have
	$
		\mathcal{P}_{\! \! \# p
% 		,\lambda
		}^{L,d}
		\subset
		\mathcal{P}_{\! \! \# p
% 		,\lambda
		}^{\ell,d}
		\otimes
		\mathcal{P}_{\! \! \# p,
% 		\lambda
		-\ell}^{L-\ell,d}
	$.
\end{proposition}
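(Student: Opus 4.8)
The plan is to mimic the proof of Proposition~\ref{Pr:AlgPolTensStruct}, replacing the binomial expansion of monomials by the exact multiplicativity of the complex exponential~\eqref{Eq:DefExp}. Both $\mathcal{P}_{\# p}^{L,d}$ and the product space on the right-hand side are tensor products over the $d$ physical coordinates $k\in\Set{1,\ldots,d}$, and the permutation $\Ten{\varPi}^L$ reorders the $dL$ binary indices so as to group the first $\ell$ discretization levels (over all coordinates) separately from the last $L-\ell$ levels. Hence it suffices to establish the claim for a single univariate factor $\widehat{T}_\alpha$ with $\alpha\in\Set{-p,\ldots,p}$ and then tensorize over $k$ and pass to spans. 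Concretely, I would show that the coefficient vector $\bar\varPhi^L\widehat{T}_\alpha\in\C^{2^L}$ factorizes, at the cut between levels $\ell$ and $\ell+1$, as a Kronecker product of a coarse vector representing an element of $\mathcal{P}_{\# p}^{\ell,1}$ and a fine vector representing an element of $\mathcal{P}_{\# p,-\ell}^{L-\ell,1}$.

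The key computation is the following. By~\eqref{Eq:DefOpEF1d-intro} and~\eqref{Eq:DefExp}, the $i$th entry of $\bar\varPhi^L\widehat{T}_\alpha$ equals $2^L\int_{t^L_{i-1}}^{t^L_i}\exp\Par{2\pi\IU\alpha x}\d x=c_\alpha\,\exp\Par[1]{2\pi\IU\alpha\,(i-1)/2^L}$, where $c_\alpha$ is a constant independent of $i$. Writing the grid index in binary as $i-1=\sum_{\ell'=1}^L b_{\ell'}\,2^{L-\ell'}$ with $b_{\ell'}\in\Set{0,1}$, the exponential separates across levels, $\exp\Par[1]{2\pi\IU\alpha\,(i-1)/2^L}=\prod_{\ell'=1}^L\exp\Par[1]{2\pi\IU\alpha\,b_{\ell'}\,2^{-\ell'}}$. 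Splitting this product at $\ell'=\ell$ produces two factors. The coarse factor, indexed by $\Par{b_1,\ldots,b_\ell}$, evaluates to $\exp\Par[1]{2\pi\IU\alpha\,m/2^\ell}$ with $m=\sum_{\ell'=1}^\ell b_{\ell'}2^{\ell-\ell'}\in\Set{0,\ldots,2^\ell-1}$, i.e. it is (up to the constant) the $2^\ell$-grid coefficient vector of $\widehat{T}_\alpha$ and thus represents an element of $\mathcal{P}_{\# p}^{\ell,1}$. The fine factor, indexed by $\Par{b_{\ell+1},\ldots,b_L}$, equals $\exp\Par[1]{2\pi\IU\,(\alpha\,2^{-\ell})\,m'/2^{L-\ell}}$ with $m'=\sum_{\ell''=1}^{L-\ell}b_{\ell+\ell''}2^{L-\ell-\ell''}$, i.e. it is the $2^{L-\ell}$-grid coefficient vector of $x\mapsto\exp\Par{2\pi\IU\,\alpha\,2^{-\ell}x}=\widehat{T}_\alpha\Par{2^{-\ell}\,\cdot\,}\in\mathscr{P}_{\# p,-\ell}$, and so represents an element of $\mathcal{P}_{\# p,-\ell}^{L-\ell,1}$ by~\eqref{Eq:DefPSp-Coeff-Trig-ell}.

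The only genuine subtlety — and the reason the dilated space $\mathscr{P}_{\# p,-\ell}$, rather than $\mathscr{P}_{\# p}$ itself, appears in the fine block — is that restricting a frequency-$\alpha$ exponential to the fine levels rescales its frequency by $2^{-\ell}$; unlike the algebraic setting of Proposition~\ref{Pr:AlgPolTensStruct}, where the degree is preserved under the dyadic split, here this dilation must be tracked, which is precisely what the notation~\eqref{Eq:DefPSp-Coeff-Trig-ell} was introduced to record. Once the univariate factorization is in hand, I would tensorize over the $d$ physical coordinates, apply $\Ten{\varPi}^L$ to rearrange the $dL$ indices into a coarse block (first $\ell$ levels, all coordinates) and a fine block (last $L-\ell$ levels, all coordinates), and finally take spans over $\alpha_1,\ldots,\alpha_d\in\Set{-p,\ldots,p}$, yielding $\mathcal{P}_{\# p}^{L,d}\subset\mathcal{P}_{\# p}^{\ell,d}\otimes\mathcal{P}_{\# p,-\ell}^{L-\ell,d}$. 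Because the exponential separates exactly, there is no approximation involved, so I do not anticipate any real obstacle beyond the index bookkeeping and the frequency-rescaling observation just noted.
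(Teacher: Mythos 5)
Your proof is correct and is essentially the paper's own argument: the paper disposes of this proposition with the single remark that it is ``an immediate consequence of the separability of the exponential function,'' and your binary-digit factorization of the grid index --- including the frequency dilation $\alpha \mapsto \alpha\, 2^{-\ell}$ across the level-$\ell$ cut, which is exactly what the dilated space $\mathscr{P}_{\# p,-\ell}$ of~\eqref{Eq:DefPSp-Coeff-Trig-ell} was introduced to record --- is precisely that observation carried out in detail. One small point to tidy up in the degenerate regime $p \geq 2^{\ell}$: if $\alpha$ is a nonzero multiple of $2^{\ell}$ (which requires $\Abs{\alpha} \geq 2^{\ell}$, hence is only possible when $p \geq 2^{\ell}$), the coarse cell averages of $\widehat{T}_\alpha$ vanish, since each coarse cell contains an integer number of full periods, so your phrase ``up to the constant the coefficient vector'' fails for those $\alpha$; the coarse evaluation vector $\Par[1]{\exp\Par{2\pi\IU\alpha\, m/2^{\ell}}}_m$ nevertheless lies in $\mathcal{P}_{\! \! \# p}^{\ell,1}$ because $\widehat{T}_\alpha$ aliases on the coarse grid to $\widehat{T}_{\alpha'}$ for any $\alpha' \equiv \alpha \pmod{2^{\ell}}$, and the minimal such representative satisfies $\Abs{\alpha'} \leq 2^{\ell-1} \leq p$, so the conclusion stands unchanged.
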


\subsubsection{Approximation of the one-scale high-dimensional limit problem}\label{sec:high-dim-approx}

% Under Assumption~\ref{}

Apart from realizing arbitrary accuracy, the approximations
given by~\eqref{eq:uvi-approx} and~\eqref{Eq:wi-pol-approx}
are also structured in the sense that
\begin{equation}\label{Eq:wi-pol-approx-struct}
	\varPsi_i^L \QQ w_i^L
	\in
	\mathcal{P}_{p_L}^{L,d}
	\otimes
	\bigotimes_{j=1}^i \mathcal{P}_{\! \! \# p_L}^{L,d}
\end{equation}
for all $L\in\Nz$ and $i\in\Set{1,\ldots,n}$.
Then, for all $L\in\Nz$ and $i\in\Set{1,\ldots,n}$,
relations~\eqref{Eq:wi-pol-approx-struct}
and \eqref{eq:uvi-approx} with~\eqref{Eq:wi-pol-approx}
result in
$
	\varPsi_i^L \QQ u_i^L
	\in
	\mathscr{Q}_i^L
$
and
$
	\bar\varPsi_i^L \QQ v_i^L
	\in
	(\mathscr{Q}_i^L)^d
$, 
where
\begin{equation}\label{eq:ui-approx-Space}
	\mathscr{Q}_i^L
	=
	\CuBr[2]{ \mathscr{S}^L  \odot \, \mathcal{P}_{(i+1) \QQ p_L}^{L,d} }
	\otimes \,
	\bigotimes_{j=1}^i \mathcal{P}_{\! \! \# (i+1-j) \QQ p_L}^{L,d}
	\subset
	\C^{2^{(i+1)dL}}
	\, .
\end{equation}
Here, the operation ``$\odot$'' between two spaces
denotes taking the span of the set of pointwise products
of all pairs of elements from the respective spaces.

\begin{theorem}\label{thm:limit-problem}
	Let Assumptions~\ref{As:Coeff}, \ref{ass:Analytic1} and~\ref{ass:tech} hold
	and $\Tuple{u_0,u_1,\ldots,u_n} \in \bm{V}_n$ be the solution of~\eqref{limeqn}.
	Consider
% 	$u_0^L \in V^L$ and
	$v^L_0 \in \bar{V}^L$ with $L\in\N$ satisfying
	Assumption~\ref{ass:u0-low-rank} and
	such that
	$
% 		\Norm{u_0 - u_0^L}_{V}
% 		,
		\Norm{v_0 - v_0^L}_{L^2 (D)^d}
		\leq
		C_0
		\,
		L^{\gamma_0}
		\,
		2^{-\alpha L}
	$
	for all $L\in\N$ with $\alpha\in\IntOC{0}{1}$
	and with positive constants $C_0$ and $\gamma_0$.
	Then the approximations
	$u_i^L \in V_i^L$ with $L\in\N$ and $i\in\Set{1,\ldots,n}$
	given by~\eqref{eq:uvi-approx} and~\eqref{Eq:wi-pol-approx}
	satisfy the following
	with positive constants $\tilde{C}$ and $\tilde{c}$ and
	$\tilde{\gamma} = \max\Set{2,\gamma_0}$.

	For all $L\in \N$ and $i\in\Set{1,\ldots,n}$,
	the bound
	$
		\Norm{u_i - u_i^L}_{V}
		\leq
		\tilde{C}
		\,
		L^{\tilde{\gamma}}
		\,
		2^{-\alpha L}
	$
	holds
	and
	the coefficient tensor
	$\varPsi_i^L u_i^L$
	admits a decomposition of the form~\eqref{eq:TT-MPS}
	with $(n+1)L$ levels
	and ranks bounded from above by
	$R_L = \tilde{c} \QQ L^{(n+1)d} \QQ r_L$
\end{theorem}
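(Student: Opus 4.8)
The plan is to prove the two assertions—the $V_i$-error bound and the quantized tensor-rank bound—separately, both resting on the apparatus assembled in Sections~\ref{sc:ApproxRec}--\ref{sc:LowRkTnsAppr}. The error bound is the routine part. I would start from~\eqref{eq:rec-approx-error-exp}, which already reduces the scale-interaction error to the error in $v_0^L$ plus a fixed discretization term, $\Norm{u_i - u_i^L}_{V_i} \lesssim \Norm{v_0 - v_0^L}_{L^2(D)^d} + L^2\, 2^{-L}$, uniformly over $i \in \Set{1,\ldots,n}$. Inserting the hypothesized rate $\Norm{v_0 - v_0^L}_{L^2(D)^d} \leq C_0\, L^{\gamma_0}\, 2^{-\alpha L}$ and using $\alpha \in \IntOC{0}{1}$ to absorb $L^2\, 2^{-L} \leq L^2\, 2^{-\alpha L}$, both contributions collapse onto one algebraic-times-exponential envelope, yielding $\Norm{u_i - u_i^L}_{V_i} \lesssim L^{\max\Set{2,\gamma_0}}\, 2^{-\alpha L}$, i.e. the claim with $\tilde{\gamma} = \max\Set{2,\gamma_0}$.

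The substance of the theorem lies in the rank bound. By~\eqref{eq:uvi-approx}, \eqref{Eq:wi-pol-approx} and~\eqref{eq:ui-approx-Space} the coefficient tensor satisfies $\varPsi_i^L u_i^L \in \mathscr{Q}_i^L$, so it suffices to bound, at each level cut of the multilevel format, the dimension of a factor space through which $\mathscr{Q}_i^L$ passes: if $\mathscr{Q}_i^L \subset \mathscr{A} \otimes \mathscr{B}$ with $\mathscr{A}$, $\mathscr{B}$ supported on the left and right modes of the cut, then the matricization of any element has column space in $\mathscr{A}$ and row space in $\mathscr{B}$, whence its rank is at most $\min\Set{\Dim\mathscr{A},\Dim\mathscr{B}}$. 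I would first record that $\mathscr{Q}_i^L$ is an $(i+1)$-fold tensor product of single-variable blocks, block $0$ carrying $\mathscr{S}^L \odot \mathcal{P}_{(i+1)p_L}^{L,d}$ and block $j$ (for $j=1,\ldots,i$) carrying $\mathcal{P}_{\# (i+1-j)p_L}^{L,d}$, each block a tensor in $\C^{2^{dL}}$ with $L$ levels, so that $\varPsi_i^L u_i^L$ is naturally an $(i+1)L$-level tensor (and $i+1 \leq n+1$). A cut of this tensor either (a) falls on a block boundary, leaving every block entirely on one side, or (b) splits one block at an intermediate level $\ell$. In case (b) within a trigonometric block $j\geq 1$, Proposition~\ref{Pr:TrigPolTensStruct} factorizes that block with both factor dimensions at most $(2(i+1-j)p_L+1)^d$; within block $0$, Proposition~\ref{Pr:AlgPolTensStruct} factorizes the algebraic factor (dimensions $\leq ((i+1)p_L+1)^d$) while Assumption~\ref{ass:u0-low-rank} factorizes $\mathscr{S}^L$ (dimensions $\leq r_L$), and since the Hadamard product respects Kronecker structure the product space $\mathscr{S}^L \odot \mathcal{P}_{(i+1)p_L}^{L,d}$ factorizes with factor dimensions at most $r_L\,((i+1)p_L+1)^d$.

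It then remains to estimate the factor dimension at every cut and maximize. With $p_L = \Ceil{c\, L}$ from~\eqref{eq:wi-pol-degree}, each trigonometric block has full dimension $(2(i+1-j)p_L+1)^d \lesssim L^d$ and block $0$ has full dimension $\lesssim L^d$. A cut interior to block $0$ thus contributes, on the left side, a factor of dimension $\lesssim r_L\, L^d$; a cut interior to a block $j\geq1$, or a block-boundary cut, involves only full polynomial/trigonometric block dimensions over at most $i+1$ blocks and contributes $\lesssim L^{(i+1)d}$, with no $r_L$ since block $0$ then lies undivided on one side. Taking the minimum over the two sides of each cut only improves these estimates, and because $i+1 \leq n+1$ and $r_L \geq 1$, every cut is bounded by $\tilde{c}\, L^{(n+1)d}\, r_L$ for a suitable $\tilde{c}$, which is the asserted $R_L$. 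The hard part is precisely this last bookkeeping: one must check that the algebraic, trigonometric and low-rank factorizations are compatible at a common cut, that both $\odot$ and $\otimes$ multiply factor dimensions rather than inflate them, and that the $r_L$-dependence is confined to cuts interior to block $0$; the exponent $(n+1)d$, rather than the sharper $(i+1)d$, is only the price of a bound uniform in $i \leq n$.
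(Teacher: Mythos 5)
Your proposal is correct and follows essentially the same route as the paper's proof: the error bound is read off from~\eqref{eq:rec-approx-error-exp} with the term $L^2\,2^{-L}$ absorbed into $L^{\tilde\gamma}\,2^{-\alpha L}$, and the rank bound is obtained by factorizing $\mathscr{Q}_i^L$ at each level cut via Assumption~\ref{ass:u0-low-rank} together with Propositions~\ref{Pr:AlgPolTensStruct} and~\ref{Pr:TrigPolTensStruct}, exactly as in the paper's two-case treatment (macroscale cuts yielding $\lesssim r_L\,L^d$, microscale and block-boundary cuts yielding $\lesssim L^{(i+1)d}$ from the right-hand factor, uniformized to $\tilde c\,L^{(n+1)d}\,r_L$). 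Your additional remarks on the min-over-sides rank estimate and the compatibility of the Hadamard product with the Kronecker factorization only make explicit what the paper uses implicitly.
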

\begin{proof}
	Consider $i\in\Set{1,\ldots,n}$ and $L\in \N$.
	The claimed accuracy bound follows from~\eqref{eq:rec-approx-error-exp}.

	To bound the first $L$ ranks,
	we consider $\ell\in\Set{1,\ldots,L}$
	and factorize $\mathscr{Q}_i^L$ so that the first factor is a subspace
	of $\C^{2^{d\ell}}$. Then this subspace corresponds
	to the $\ell$ coarsest levels of the macroscale, and its dimension
	majorates the corresponding rank of $\varPsi_i^L \QQ u_i^L \in \mathscr{Q}_i^L$.
	First, we obtain from Proposition~\ref{Pr:AlgPolTensStruct} that
	$
		\mathcal{P}_{(i+1) \QQ p_L}^{L,d}
		\subset
		\mathcal{P}_{(i+1) \QQ p_L}^{\ell,d}
		\otimes
		\C^{2^{d(L-\ell)}}
	$,
	where the dimension of $\mathcal{P}_{(i+1) \QQ p_L}^{\ell,d}\subset\C^{2^{d\ell}}$
	is $((i+1)p_L + 1)^d$.
	On the other hand, by Assumption~\ref{ass:u0-low-rank},
	we have
	$\mathscr{S}^L \subset \mathscr{L}^L_\ell \otimes \C^{2^{d(L-\ell)}}$,
	where the dimension of $\mathscr{L}^L_\ell \subset\C^{2^{d\ell}}$
	does not exceed $r_L$.
	This results in
	$
		\mathscr{Q}_i^L
		\subset
		\widetilde{\mathscr{L}}^L_\ell
		\otimes
		\C^{2^{d(L-\ell)}}
% 		\Par{\mathscr{M}^L_\ell \odot \mathcal{P}_{(i+1) \QQ p_L}^{L-\ell,d} }
		\otimes
		\,
		\bigotimes_{j=1}^i \mathcal{P}_{\! \! \# (i+1-j) \QQ p_L}^{L,d}
% 		\, ,
		%
	$,
	where
	$
		\widetilde{\mathscr{L}}^L_\ell
		=
		\mathscr{L}^L_\ell \odot \mathcal{P}_{(i+1) \QQ p_L}^{L,d}
	$.
	We note then that
	$
		\Dim \widetilde{\mathscr{L}}^L_\ell
		\leq
		\Dim
		\mathscr{L}^L_\ell
		\cdot
		\Dim
		\mathcal{P}_{(i+1) \QQ p_L}^{L,d}
		\leq
		((i+1)p_L+1)^d \, r_L
		\leq
		\tilde{c} \QQ L^d \QQ r_L = R_L
	$
	for a suitable positive constant $\tilde{c}$
	independent of $L$,
	due to the linear dependence~\eqref{eq:wi-pol-degree}
	of $p_L$ on $L$.

	To bound the other ranks,
	we now consider $j\in\Set{1,\ldots,i}$ and $\ell\in\Set{0,\ldots,L}$
	and factorize $\mathscr{Q}_i^L$ so that the last factor is a subspace
	of $\C^{2^{d(L-\ell) + (i-j)dL}}$. This subspace corresponds
	to
	the $L-\ell$ finest levels of the $j$th microscale and
	all levels of all finer microscales.
	The dimension of this
	subspace
	majorates the corresponding rank of $\varPsi_i^L \QQ u_i^L \in \mathscr{Q}_i^L$.

	Applying Proposition~\ref{Pr:TrigPolTensStruct}, we obtain
	$
		\mathcal{P}_{\! \! \# (i+1-j) \QQ p_L}^{L,d}
		\subset
		\mathcal{P}_{\! \! \# (i+1-j) \QQ p_L}^{\ell,d}
		\otimes
		\mathcal{P}_{\! \! \# (i+1-j) \QQ p_L,-\ell}^{L-\ell,d}
	$,
	where the dimension of both the factors is $(2(i+1-j)p_L + 1)^d$.
	Then we have
	\begin{equation}\nonumber
		\mathscr{Q}_i^L
		\subset
		\CuBr[2]{ \mathscr{S}^L  \odot \, \mathcal{P}_{(i+1) \QQ p_L}^{L,d} }
		\, \otimes \,
		\bigotimes_{m=1}^{j-1} \mathcal{P}_{\! \! \# (i+1-m) \QQ p_L}^{L,d}
		\, \otimes \,
		\mathcal{P}_{\! \! \# (i+1-j) \QQ p_L}^{\ell,d}
		\, \otimes \,
		\tilde{\mathscr{M}}^L_{j,\ell}
	\end{equation}
	with
	$
		\tilde{\mathscr{M}}^L_{j,\ell}
		=
		\mathcal{P}_{\! \! \# (i+1-j) \QQ p_L,-\ell}^{L-\ell,d}
		\, \otimes \,
		\bigotimes_{m=j+1}^i \mathcal{P}_{\! \! \# (i+1-m) \QQ p_L}^{L,d}
	$.
	For the last factor, we find that
	\begin{multline}\nonumber
		\Dim \tilde{\mathscr{M}}^L_{j,\ell}
		\leq
		\Dim \mathcal{P}_{\! \! \# (i+1-j) \QQ p_L,-\ell}^{L-\ell,d}
		\prod_{m=j+1}^i
		\Dim \mathcal{P}_{\! \! \# (i+1-m) \QQ p_L}^{L,d}
		=
		\prod_{m=j}^i
		(2 (i+1-m) \QQ p_L + 1)^d
		\\
		\leq
		\tilde{c} \QQ L^{(i+1)d}
		=
		R'_L
		\, .
	\end{multline}
	As above, the latter inequality holds with a suitable positive constant $\tilde{c}$
	independent of $L$
	due to the linear dependence~\eqref{eq:wi-pol-degree}
	of $p_L$ on $L$.
\end{proof}

\subsubsection{Approximation of the multiscale problem}\label{sec:multiscale-approx}

%%%%%%%%%%%%%%%%%%%%%%%%%%%%%%%%%%%%%%%%%%%%%%%%%%%%%%%%%%%

In this section, we analyze the low-rank structure and accuracy of
$\mathcal{U}^\varepsilon \QQ v_n^L$ as
an approximation to $\mathcal{U}^\varepsilon \QQ v_n$.
As in Section~\ref{sec:high-dim-approx},
we develop our analysis here under Assumptions~\ref{As:Coeff},
\ref{ass:Analytic1} and~\ref{ass:u0-low-rank}.
Additionally, we make the following technical assumption, which
simplifies the analysis of tensor structure in the present section.
\begin{assumption}\label{ass:tech}
	For every $i\in\Set{1,\ldots,n}$, we have $L \geq \lambda_i - \lambda_{i-1}$.
\end{assumption}

We start with defining finite-element subspaces in which we will consider
averaged approximations.
To this end, we set
	\begin{equation}\nonumber
		I
		=
		\begin{pmatrix}
			1 & 0 \\ 0 & 1
		\end{pmatrix}
		\quad\text{and}\quad
		\xi
		=
		\frac12
		\begin{pmatrix}
			1 \\ 1
		\end{pmatrix}
	\end{equation}
	and, for every $L\in\N$ and $i\in\Set{1,\ldots,n}$,
	consider the matrices
	\begin{equation}\label{eq:def-M}
		\tilde{\bm{M}}_{i-1}^L
		=
		I^{\otimes d(\lambda_i-\lambda_{i-1})}
		\otimes
		\Par[2]{\QQ\frac12 \QQ \xi^\MT}^{\otimes d(L - \lambda_i + \lambda_{i-1})}
		\quad\text{and}\quad
		\bm{M}_{i-1}^L
		=
		I^{\otimes d\lambda_{i-1}}
		\otimes
		\tilde{\bm{M}}_i^L
		\, .
	\end{equation}
	The action of the matrix $\bm{M}_{i-1}^L$ on the coefficient tensor of
	a piecewise-constant function subordinate to a uniform partition with
	$2^{d(L+\lambda_{i-1})}$ elements of linear size $2^{-(L+\lambda_{i-1})}$
	consists in averaging the function
	over the $2^{d \lambda_i}$ cells of scale $\varepsilon_i = 2^{-\lambda_i}$.
	The matrix $\tilde{\bm{M}}_{i-1}^L$, on the other hand, represents the same
	averaging operation on subtensors corresponding to single cells
	of scale $\varepsilon_{i-1} = 2^{-\lambda_{i-1}}$.
	The order of the factors in~\eqref{eq:def-M}
	reflects the use of transposition~\eqref{Eq:IndTranspMatrix}
	for the coefficient tensors, see~\eqref{Eq:DefAnOp}.

	Using the matrices introduced in~\eqref{eq:def-M}, we
	iteratively define the following spaces:
	$
		\mathscr{S}_0^L
		=
		\mathscr{S}^L
		\odot
		\,
		\mathcal{P}_{(n+1) \QQ p_L}^{L,d}
		\subset
		\C^{2^{dL}}
	$
	and
	\begin{equation}\label{eq:ui-approx-SpaceAvgX}
		%
% 		\quad\text{and}\quad
		\mathscr{S}_i^L
		=
		\Par[1]{
			\bm{M}_{i-1}^L
			\,
			\mathscr{S}_{i-1}^L
		}
		\otimes
		\mathcal{P}_{\# (n+1-i) \QQ p_L}^{L,d}
		\subset
		\C^{2^{d(\lambda_i+L)}}
	\end{equation}
	for all $i\in\Set{1,\ldots,n}$ and $L\in\N$.
Eventually, we are interested the subspaces with index $i=n$,
which are relevant due to the following result. It is a corollary
of auxiliary technical Lemmas~\ref{lem:WL-avg} and~\ref{lem:Qstar-avg},
which we present in the Appendix.
\begin{lemma}\label{lem:Savg}
	For all $L\in\N$ 	and $v \in \bar{V}_n^L$ such that
	$\bar{\varPsi}_n^L \, v \in \mathscr{Q}_n^L$,
	we have
	$\mathcal{U}^\varepsilon \QQ v \in \bar{V}_{\! 0}^{\lambda_n+L}$ and
	$\bar{\varPsi}_0^{\lambda_n+L} \, \mathcal{U}^\varepsilon \QQ v \in \mathscr{S}_n^L$.
\end{lemma}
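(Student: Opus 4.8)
The plan is to peel the folding–averaging operator apart one microscale at a time and to track the tensor structure of the coefficient tensor through each step. Writing $\mathcal{U}^\varepsilon = \mathcal{U}_n^\varepsilon \circ \cdots \circ \mathcal{U}_1^\varepsilon$ as in Definition~\ref{def:unfolding-folding}, I set $v^{(0)} = v$ and $v^{(i)} = \mathcal{U}_i^\varepsilon v^{(i-1)}$ for $i \in \Set{1,\ldots,n}$, so that $\mathcal{U}^\varepsilon v = v^{(n)}$. I claim, by induction on $i$, that $v^{(i)}$ is piecewise constant on $D \times Y_{i+1} \times \cdots \times Y_n$ with the physical variable resolved at scale $2^{-(\lambda_i+L)}$ and the remaining microscales at scale $2^{-L}$, and that its coefficient tensor lies in $\mathscr{S}_i^L \otimes \bigotimes_{j=i+1}^n \mathcal{P}_{\# (n+1-j) p_L}^{L,d}$.

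The base case $i=0$ is exactly the hypothesis $\bar{\varPsi}_n^L v \in \mathscr{Q}_n^L$: by~\eqref{eq:ui-approx-Space} with $i=n$ and the definition $\mathscr{S}_0^L = \mathscr{S}^L \odot \mathcal{P}_{(n+1) p_L}^{L,d}$, one has $\mathscr{Q}_n^L = \mathscr{S}_0^L \otimes \bigotimes_{j=1}^n \mathcal{P}_{\# (n+1-j) p_L}^{L,d}$, matching the claim for $i=0$.

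For the inductive step I apply the single-microscale operator $\mathcal{U}_i^\varepsilon$. Its action glues the $i$th fast variable onto the fine structure of the physical variable — setting $y_i$ equal to the fractional part $\CuBr{x/\varepsilon_i}$ — and averages the function over the position of $x$ within each cell of scale $\varepsilon_i$. On piecewise-constant coefficient tensors this is precisely the operation encoded by $\bm{M}_{i-1}^L$ in~\eqref{eq:def-M}: the factor $I^{\otimes d\lambda_i}$ retains the $\lambda_i$ levels indexing the $\varepsilon_i$-cell, the factor $\Par{\frac12\,\xi^\MT}^{\otimes d(L-\lambda_i+\lambda_{i-1})}$ averages out the remaining fine levels of the old physical variable, and the $L$ levels carried by $y_i$ are reattached as the new finest levels of the physical variable, in the order dictated by the transposition~\eqref{Eq:IndTranspMatrix}. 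Assumption~\ref{ass:tech} ensures $L \geq \lambda_i - \lambda_{i-1}$, so these fine levels are genuinely present and the two grids are compatible. The two ingredients — the action of $\mathcal{U}_i^\varepsilon$ on a generic piecewise-constant factor through $\bm{M}_{i-1}^L$, and the fact that the gluing turns the $y_i$-factor $\mathcal{P}_{\# (n+1-i) p_L}^{L,d}$ into a trigonometric-polynomial block of the new physical variable — are supplied by Lemmas~\ref{lem:WL-avg} and~\ref{lem:Qstar-avg}. Combining them, the coefficient tensor of $v^{(i)}$ lands in $\Par{\bm{M}_{i-1}^L \mathscr{S}_{i-1}^L} \otimes \mathcal{P}_{\# (n+1-i) p_L}^{L,d} \otimes \bigotimes_{j=i+1}^n \mathcal{P}_{\# (n+1-j) p_L}^{L,d}$; its leading two factors are $\mathscr{S}_i^L$ by~\eqref{eq:ui-approx-SpaceAvgX}, which closes the induction.

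Taking $i=n$ finishes the argument: $v^{(n)} = \mathcal{U}^\varepsilon v$ is piecewise constant on $D$ at resolution $2^{-(\lambda_n+L)}$, whence $\mathcal{U}^\varepsilon v \in \bar{V}_0^{\lambda_n+L}$, and its coefficient tensor $\bar{\varPsi}_0^{\lambda_n+L}\,\mathcal{U}^\varepsilon v$ lies in $\mathscr{S}_n^L$. The main obstacle — which I expect to be the substance of the two auxiliary lemmas rather than of this corollary — is verifying the exact correspondence between $\mathcal{U}_i^\varepsilon$ and $\bm{M}_{i-1}^L$ at the level of coefficients: one must match the node indexing of the physical grid before and after gluing, check that $y_i = \CuBr{x/\varepsilon_i}$ sends the $y_i$-cell index onto the fine part of the new physical cell index, confirm that integrating over $z \in Y_i$ collapses exactly the levels below scale $\varepsilon_i$, and pin down the normalization (the factor $\frac12$ in $\frac12\,\xi^\MT$ against the $\Abs{Y_i}^{-1}$ weight). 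Keeping the level ordering~\eqref{Eq:IndTranspMatrix} and the periodic wrap-around of the trigonometric factors consistent across the $n$ steps is the delicate bookkeeping.
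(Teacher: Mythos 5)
Your proposal is correct and takes essentially the same route as the paper: the paper likewise obtains Lemma~\ref{lem:Savg} by factoring $\mathcal{U}^\varepsilon = \mathcal{U}_n^\varepsilon \circ \cdots \circ \mathcal{U}_1^\varepsilon$ and iterating Lemmas~\ref{lem:WL-avg} and~\ref{lem:Qstar-avg} over $i=1,\ldots,n$ through exactly the intermediate spaces $\bar{V}_{\! \star \QQ i}^L$ and $\mathscr{Q}_{\star i}^L = \mathscr{S}_i^L \otimes \bigotimes_{j=i+1}^n \mathcal{P}_{\! \! \# (n+1-j) \QQ p_L}^{L,d}$ that your inductive invariant describes, with endpoints $\mathscr{Q}_{\star 0}^L = \mathscr{Q}_n^L$ and $\mathscr{Q}_{\star n}^L = \mathscr{S}_n^L$. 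The coefficient-level bookkeeping you flag at the end (the correspondence between $\mathcal{U}_i^\varepsilon$ and $\bm{M}_{i-1}^L$, the gluing of the $y_i$-levels onto the physical variable, and the normalization) is, as you anticipate, carried out inside the paper's proof of Lemma~\ref{lem:Qstar-avg} rather than in the corollary itself.
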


	Iterating~\eqref{eq:ui-approx-SpaceAvgX} under Assumption~\ref{ass:tech},
	we arrive at
	\begin{multline}\label{eq:ui-approx-SpaceAvgXX}
		\mathscr{S}_i^L
		=
		\Par{ \bm{M}_0^L \mathscr{S}_0^L }
		\otimes
		\Par[2]{
			\QQ
			\bigotimes_{j=1}^{i-1}
			\tilde{\bm{M}}_j^L \, \mathcal{P}_{\# (n+1-j) \QQ p_L}^{L,d}
		}
		\otimes
		\mathcal{P}_{\# p_L}^{L,d}
		\\
		=
		\Par{ \bm{M}_0^L \mathscr{S}_0^L }
		\otimes
		\Par[2]{
			\QQ
			\bigotimes_{j=1}^{i-1}
			\mathcal{P}_{\# (n+1-j) \QQ p_L}^{\lambda_{j+1}-\lambda_j,d}
		}
		\otimes
		\mathcal{P}_{\# p_L}^{L,d}
		\, ,
	\end{multline}
	where the second equality holds due to
	that
	$
		\tilde{\bm{M}}_j^L \, \mathcal{P}_{\# (n+1-j) \QQ p_L}^{L,d}
		=
		\mathcal{P}_{\# (n+1-j) \QQ p_L}^{\lambda_j-\lambda_{j-1},d}
	$
	for any $j\in\Set{1,\ldots,n}$
	by Proposition~\ref{Pr:TrigPolTensStruct}.
\begin{theorem}\label{thm:limit-problem-avg}
	Let Assumptions~\ref{As:Coeff}, \ref{ass:Analytic1} and~\ref{ass:tech} hold
	and $\Tuple{u_0,u_1,\ldots,u_n} \in \bm{V}_n$ be the solution of~\eqref{limeqn}.
	Consider $v_0 = \nabla u_0$,
	$v_n = \nabla u_0 + \nabla_1 u_1 + \cdots + \nabla_n u_n$
	and
	$v^L_0 \in \bar{V}^L$ with $L\in\N$ satisfying
	Assumption~\ref{ass:u0-low-rank} and
	such that
	$
% 		\Norm{u_0 - u_0^L}_{V}
% 		,
		\Norm{v_0 - v_0^L}_{L^2 (D)^d}
		\leq
		C_0
		\,
		L^{\gamma_0}
		\,
		2^{-\alpha L}
	$
	for all $L\in\N$ with $\alpha\in\IntOC{0}{1}$
	and with positive constants $C_0$ and $\gamma_0$.
	Then the approximations
	$\mathcal{U}^\varepsilon \QQ  v_n^L \in (\bar{V}^{\lambda_n+L})^d$
	with $L\in\N$
	defined by~\eqref{eq:uvi-approx} and~\eqref{Eq:wi-pol-approx}
	satisfy the following
	with positive constants $\tilde{C}$ and $\tilde{c}$ and
	$\tilde{\gamma} = \max\Set{2,\gamma_0}$.

	For all $L\in\N$ sufficiently large (to satisfy Assumption~\ref{ass:tech}),
	the error bound
	$
		\Norm{
			\mathcal{U}^\varepsilon \QQ v_n - \mathcal{U}^\varepsilon \QQ  v_n^L
		}_{L^2(D)^d}
		\leq
		\tilde{C}
		\,
		L^{\tilde{\gamma}}
		\,
		2^{-\alpha L}
	$
	holds
	and
	the coefficient tensor
	$\bar{\varPsi}_0^{\lambda_n+L} \, \mathcal{U}^\varepsilon \QQ v_n^L$
	admits a decomposition of the form~\eqref{eq:TT-MPS}
	with $\lambda_n + L$ levels
	and with ranks bounded from above by
	$R_L = \tilde{c} \QQ L^{(n+1)d} \QQ r_L$.
\end{theorem}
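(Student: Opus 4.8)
The plan is to establish the two assertions—accuracy and rank—separately, reducing each to results already in place. The accuracy bound is the lighter half. Since the averaging operators act componentwise and $\mathcal{U}^\varepsilon$ is applied to the difference $v_n - v_n^L$, I would invoke the continuity estimate~\eqref{eq:avg-op-err} on each of the $d$ components and sum the squared contributions to obtain
\[
	\Norm{\mathcal{U}^\varepsilon v_n - \mathcal{U}^\varepsilon v_n^L}_{L^2(D)^d}
	\leq
	\Norm{v_n - v_n^L}_{L^2(D \times \bm{Y}_n)^d}
	\, .
\]
The right-hand side is then bounded by~\eqref{eq:rec-approx-error-exp}, which gives $\Norm{v_0 - v_0^L}_{L^2(D)^d} + L^2 \, 2^{-L}$; combining the hypothesis $\Norm{v_0 - v_0^L}_{L^2(D)^d} \leq C_0 \, L^{\gamma_0} \, 2^{-\alpha L}$ with $\alpha \leq 1$ (so that $2^{-L} \leq 2^{-\alpha L}$) lets me absorb both terms into $\tilde{C} \, L^{\tilde{\gamma}} \, 2^{-\alpha L}$ with $\tilde{\gamma} = \max\Set{2,\gamma_0}$.

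The rank bound requires first identifying the structured space in which the averaged approximation lives. By the membership $\bar{\varPsi}_n^L \, v_n^L \in (\mathscr{Q}_n^L)^d$ established in Section~\ref{sec:high-dim-approx} (cf.\ the discussion preceding~\eqref{eq:ui-approx-Space}), I would apply Lemma~\ref{lem:Savg} componentwise to conclude that $\mathcal{U}^\varepsilon v_n^L \in (\bar{V}_0^{\lambda_n+L})^d$ with coefficient tensor $\bar{\varPsi}_0^{\lambda_n+L} \, \mathcal{U}^\varepsilon v_n^L \in (\mathscr{S}_n^L)^d$. This reduces the task to bounding the multilevel TT ranks of elements of $\mathscr{S}_n^L$, for which I would use the explicit tensor-product factorization~\eqref{eq:ui-approx-SpaceAvgXX} across the $\lambda_n + L$ levels.

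With that factorization in hand, the rank count parallels the proof of Theorem~\ref{thm:limit-problem}. For any splitting level $\ell$, the level falls into exactly one of the $n+1$ scale-indexed factors of $\mathscr{S}_n^L$; I would split that single factor while keeping all factors to its left complete. The algebraic macroscale block $\bm{M}_0^L \mathscr{S}_0^L = \bm{M}_0^L \Par{\mathscr{S}^L \odot \mathcal{P}_{(n+1)p_L}^{L,d}}$ is split via Proposition~\ref{Pr:AlgPolTensStruct} together with Assumption~\ref{ass:u0-low-rank}, its left dimension being at most $r_L \cdot ((n+1)p_L+1)^d \lesssim r_L \, L^d$; each trigonometric microscale block is split via Proposition~\ref{Pr:TrigPolTensStruct}, contributing a dimension of the form $(2(n+1-j)p_L+1)^d \lesssim L^d$, where I use $p_L = \Ceil{cL}$. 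Crucially, the averaging matrices of~\eqref{eq:def-M} act as the identity on the coarse levels and as contraction on the fine ones, so they do not enlarge any of these level-wise dimensions. Multiplying the at most $n+1$ relevant factors—one carrying the $r_L$ from the macroscale and the rest each $\lesssim L^d$—yields the uniform bound $\tilde{c} \, L^{(n+1)d} \, r_L = R_L$, with the factor $d$ from the vector components absorbed into $\tilde{c}$.

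The genuine obstacle lies not in this theorem's own proof but in Lemma~\ref{lem:Savg}, on which the second step rests. There one must reconcile the \emph{physical} averaging operator $\mathcal{U}^\varepsilon$ with the \emph{algebraic} averaging matrices $\bm{M}_{i-1}^L$ of~\eqref{eq:def-M}, verifying that averaging the microscale-oscillatory trigonometric factors over the scale-$\varepsilon_i$ cells lands precisely in the embedded trigonometric spaces supplied by Proposition~\ref{Pr:TrigPolTensStruct}, consistently with the index transposition~\eqref{Eq:IndTranspMatrix}. Granting that lemma—proved through the auxiliary Lemmas~\ref{lem:WL-avg} and~\ref{lem:Qstar-avg} in the Appendix—and the identity~\eqref{eq:ui-approx-SpaceAvgXX}, the remaining work is the routine level-by-level dimension count sketched above.
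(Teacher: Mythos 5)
Your proposal is correct and follows essentially the same route as the paper's proof: the accuracy bound via \eqref{eq:avg-op-err} combined with \eqref{eq:rec-approx-error-exp}, and the rank bound via Lemma~\ref{lem:Savg} and the factorization \eqref{eq:ui-approx-SpaceAvgXX}, with the level-wise dimension count inherited from Theorem~\ref{thm:limit-problem} using Assumption~\ref{ass:u0-low-rank} and Propositions~\ref{Pr:AlgPolTensStruct} and~\ref{Pr:TrigPolTensStruct}. The only cosmetic deviation is that for splits inside the microscale factors you bound the rank by the product of dimensions of the factors to the \emph{left} of the split (carrying the $r_L$ term along), whereas the paper bounds those ranks by the right-hand factors after substituting $\lambda_{j+1}-\lambda_j$ for $L$; both counts stay below $R_L = \tilde{c}\,L^{(n+1)d}\,r_L$, and you correctly locate the genuine content in Lemma~\ref{lem:Savg}, exactly as the paper does by deferring it to the appendix lemmas.
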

\begin{proof}
	The claimed accuracy bound follows from~\eqref{eq:rec-approx-error-exp}
	combined with~\eqref{eq:avg-op-err}.

	The claimed rank bounds follows from the fact that
	$\bar{\varPsi}_0^{\lambda_n+L} \, \mathcal{U}^\varepsilon \QQ v_n^L \in \mathscr{S}_n^L$
	by Lemma~\ref{lem:Savg} combined with~\eqref{eq:ui-approx-SpaceAvgXX},
	Assumption~\ref{ass:u0-low-rank} and
	Propositions~\eqref{Pr:AlgPolTensStruct} and~\eqref{Pr:TrigPolTensStruct}.

	First, let us consider $\ell\in\Set{1,\ldots,\lambda_1 - 1}$.
	From Assumption~\ref{ass:u0-low-rank} and
	Proposition~\ref{Pr:AlgPolTensStruct}, we obtain
	$
		\mathscr{S}^L
		\subset
		\mathscr{L}^L_{\ell}
		\otimes
		\C^{2^{d(L-\ell)}}
	$
	and
	$
		\mathcal{P}_{(i+1) \QQ p_L}^{L,d}
		\subset
		\mathcal{P}_{(i+1) \QQ p_L}^{\ell,d}
		\otimes
		\C^{2^{d(L-\ell)}}
	$,
	where the dimensions of the first factors are bounded form above by
	$r_L$ and $((i+1)p_L + 1)^d$ respectively.
	This implies
	the inclusion
	$
		\bm{M}_0^L \mathscr{S}_0^L
		\subset
		\widetilde{\mathscr{L}}^L_\ell
		\otimes
		\C^{2^{d(\lambda_1-\ell)}}
	$
	with a subspace
	$\widetilde{\mathscr{L}}^L_\ell \subset \C^{2^{d\ell}}$
	of dimension at most $r_L \QQ ((i+1)p_L + 1)^d$
	and hence, by~\eqref{eq:ui-approx-SpaceAvgXX}, also
	$
		\mathscr{S}_n^L
		\subset
		\widetilde{\mathscr{L}}^L_\ell
		\otimes
		\C^{2^{d(\lambda_n+L-\ell)}}
	$.
	The first $\lambda_1-1$ ranks of
	$\bar{\varPsi}_0^{\lambda_n+L} \, \mathcal{U}^\varepsilon \QQ v_n^L$
	therefore do not exceed
	$r_L \QQ ((i+1)p_L + 1)^d$.

	To obtain a bound for all the remaining ranks at once, let us set
	$\lambda_{n+1} = \lambda_n + L$ for notational convenience and
	consider $\ell\in\Set{\lambda_k,\ldots,\lambda_{k+1} - 1}$
	with $k\in\Set{1,\ldots,n}$.
	Inasmuch as the corresponding factors indexed by $j\in\Set{k,\ldots,n}$
	in~\eqref{eq:ui-approx-Space} and~\eqref{eq:ui-approx-SpaceAvgX}
	with $i=n$ are completely analogous,
	the second part of the proof of Theorem~\ref{thm:limit-problem}
	applies herein upon replacing $L$ with $\lambda_{j+1}-\lambda_j$
	in superscript in the $j$th term for every $j\in\Set{k,\ldots,n}$.
\end{proof}

\subsubsection{The case of a separable scalar diffusion coefficient}\label{sec:sep-coeff}

Improved bounds can be obtained under additional
scale-separability assumptions
on the diffusion coefficient $A^\varepsilon$.
For example, let us consider the case when
the function $A$ is of separable form
\begin{equation}\nonumber
	A = \Par{a_0 \otimes a_1 \otimes \cdots \otimes a_n} \, K \, ,
\end{equation}
where
$K_i$ is a symmetric positive-definite matrix with spectrum in $\IntCC{\gamma}{\Gamma}$
for some positive constants $\gamma$ and $\Gamma$,
$a_0 \in L^{\infty} (D)$, $a_i \in C_{\#}(Y_i)$ for every
$i\in\Set{1,\ldots,n}$ and $I$ is the identity matrix of order $d$.
Let us also assume that
$a_0(x) > 1$ for a.e. $x\in D $ and $a_i(y)>1$ for a.e.
$y \in Y_i$ and every $i\in\Set{1,\dots,n}$.

Let us consider the following modification of Assumption~\ref{As:Coeff-i}.
\begin{assumption}[on a coefficient $A_i$ with $i\in\Set{0,\ldots,n}$
  microscales, with positive constants $\gamma$ and $\Gamma$]\label{As:Coeff-Sep}
	The coefficient $A_i$ is of the form
	$
		A_i
		=
		\,
		\Par{a_0 \otimes a_1 \otimes \cdots \otimes a_{i-1} \otimes a_i } \, K_i
	$,
	where $K_i$ is a symmetric positive-definite matrix with spectrum in $\IntCC{\gamma}{\Gamma}$.
\end{assumption}

Note that under the conditions imposed on $A$ in this section,
Assumption~\ref{As:Coeff-Sep} holds for $i=n$ with $K_n=K$.

For every $i\in\Set{1,\ldots,n}$, under Assumption~\ref{As:Coeff-Sep},
the problem~\eqref{eq:bi-fi} becomes
\begin{equation}\label{eq:bi-fi-sep}
	\begin{aligned}
		\mathsf{b}_i \Par{x,y_1,\ldots,y_{i-1},\psi,\phi}
		&=
		a_0(x) \QQ a_1(y_1)\cdots a_{i-1}(y_{i-1})
		\int_{Y_i}
		a_i
		\,
		\Par{ \nabla \psi}^\MT
		K_i
		\,
		\nabla \phi
		\, ,
		\\
		\mathsf{f}_i \Par{x,y_1,\ldots,y_{i-1},\xi,\phi}
		&=
		-
		a_0(x) \QQ a_1(y_1)\cdots a_{i-1}(y_{i-1})
		\int_{Y_i}
% 		\d y_i
		\;
		a_i
		\,
		\xi^\MT
		K_i
		\,
		\nabla \QQ \phi
	\end{aligned}
\end{equation}
for all $\psi,\phi\in H^1_{\#}\Par{Y_i}/_\R$
and for a.e.
$\Tuple{x,y_1,\ldots,y_{i-1}}\in D \times \bm{Y}_{ i-1}$.
As a result, the solution of~\eqref{Eq:wi} is independent of $x\in D$
and $\bm{y}_{i-1} \in \bm{Y}_{i-1}$,
so that $w_i \in \Par{H^1_{\#}\Par{Y_i}/_\R}^d$.
Then the upscaled coefficient $A_{i-1}$, given by~\eqref{Eq:CoeffUpscaling},
satisfies Assumption~\ref{As:Coeff-Sep} with $i-1$ microscales and
\begin{equation}\nonumber
	K_{i-1}
	=
		\int_{Y_i}
		a_i
		\,
		\Par[1]{I + \Jac \QQ  w_i }
		K_i
		\Par[1]{I + \Jac \QQ  w_i }^\MT
	=
		\int_{Y_i}
		a_i
		\,
		\Par[1]{I + \Jac \QQ  w_i }
		K_i
	>
	0
	\, ,
\end{equation}
where the integrand is a function of a single microscale variable
taking values in $Y_i$.

Iterating this argument,
one finds that
Assumption~\ref{As:Coeff-Sep} holds for every $i\in\Set{0,\ldots,n-1}$
with the same constants as for $i=n$
and that
the scale-interaction functions and their gradients,
given by~\eqref{eq:uivec},
are separable.
Consequently, the factors in the right-hand sides
of equalities~\eqref{eq:uivec},
depending on variables corresponding to only a single scale each,
can be approximated independently.
This allows to consider, instead of the spaces $\mathscr{Q}_i^L$
and $\mathscr{S}_{i}^L$ with $i\in\Set{1,\ldots,n}$,
specified in~\eqref{eq:ui-approx-Space} and~\eqref{eq:ui-approx-SpaceAvgX},
spaces of separable tensors,
and to thereby avoid the dependence on $n$ of the exponent in the rank bounds for
$\varPsi_n^L u_n^L,\ldots,\varPsi_n^L u_n^L$ and
$\bar{\varPsi}_0^{\lambda_n+L} \, \mathcal{U}^\varepsilon \QQ v_n^L$
in Theorems~\ref{thm:limit-problem} and~\ref{thm:limit-problem-avg}.

\subsubsection{The case of two dimensions: approximation of functions with corner singularities}\label{sec:RegAnSingFct}
In the remainder of this section, we consider the case of $d=2$.
We will use spaces of functions
defined on a polygonal domain that are analytic on the closure of
the domain except
a number of points
where algebraic singularities of certain order may occur.

With any set
$\varTheta$
of a finite number of distinct points in $\R^2$,
we associate the weight function
$\chi_{\varTheta}$
given by
\begin{equation}\label{Eq:wAlg}
\chi_{\varTheta}(x)
=
\prod_{\theta\in\varTheta}
\|x - \theta \|_{2}
\quad\text{for all}\quad
x\in \R^2
\, ,
\end{equation}
where $\| \cdot \|_{2}$ denotes the Euclidean norm on $\R^2$.

To quantify the analytic regularity of solutions to the high-dimensional one-scale
problem,
we use weighted Sobolev spaces and associated countably normed classes
as introduced in~\cite{Kondratjev:1967:BVP,Kondratjev:1963:BVPconical,Babuska:1988:RegularityI,Babuska:1988:hpCurvedBoundary,Babuska:1989:RegularityII}
and denoted here by
$H^{m,\ell}_{\varTheta, \beta}(\varOmega)$
and
$ C^\ell_{\varTheta, \beta}(\varOmega)$
with $\ell\in\Set{1,2}$, $m\in\Set{0,1,\ldots,\ell}$ and $\beta\in\IntCO{0}{1}$,
where $\varOmega\subset\R^2$ is a polygonal domain and
$\varTheta$
is a set
of $S\in\N$ distinct points in $\overline{\varOmega}$.

Specifically, we will use the following weighted Sobolev spaces:
\begin{equation}\nonumber
	H^{m,0}_{\varTheta, \beta}\Par{\varOmega}
	=
	\Set[1]{
	u\!: \varOmega\rightarrow \R: \; \chi_{\varTheta}^{\beta+\IndNorm{\alpha}} \, \partial^{\alpha} u \in L^2\Par{\varOmega}
	\quad\text{if}\quad
	0 \leq \IndNorm{\alpha} \leq m
	}
\end{equation}
for all $\ell \geq 0$
and
\begin{equation}\nonumber
	H^{m,\ell}_{\varTheta, \beta} \Par{\varOmega}
	=
	\Set[1]{
	u \in H^{\ell-1}\Par{\varOmega}: \; \chi_{\varTheta}^{\beta+\IndNorm{\alpha}-\ell} \, \partial^{\alpha} u \in L^2\Par{\varOmega}
	\quad\text{if}\quad
	0 \leq \IndNorm{\alpha} \leq m
	}
\end{equation}
for all $m \geq \ell \geq 1$, where the differentiation is understood in the weak sense.
By setting
\begin{equation}\label{Eq:HklbetaSeminorm}
\Seminorm{u}_{H^{m,\ell}_{\varTheta, \beta}\Par{\varOmega}}^{2}
=
\sum_{\IndNorm{\alpha}=m}
\Norm{ \chi_{\varTheta}^{\beta+m-\ell} \, \partial^\alpha u }_{L^2\Par{\varOmega}}^{2}
\quad\text{for all}\quad
u\in H^{m,\ell}_{\varTheta, \beta}\Par{\varOmega}
,
\end{equation}
we introduce $\Seminorm{\cdot}_{H^{m,\ell}_{\varTheta, \beta}\Par{\varOmega}}$,
a seminorm on $H^{m,\ell}_{\varTheta, \beta}\Par{\varOmega}$.
Also, by setting
\begin{equation}\nonumber%\label{Eq:HklbetaNorm}
\begin{split}
\Norm{ u }^2_{H^{m,0}_{\varTheta, \beta}\Par{\varOmega}}
&=
\phantom{\Norm{ u }^2_{H^{\ell-1}\Par{\varOmega}}+}
\sum_{k=0}^m \Seminorm{ u }^2_{H^{m,0}_{\varTheta, \beta}\Par{\varOmega}}
\quad\text{for}\quad
\quad
u\in H^{m,0}_{\varTheta, \beta}\Par{\varOmega}
,
\quad
m \geq 0
,
\\
\Norm{ u }^2_{H^{m,\ell}_{\varTheta, \beta}\Par{\varOmega}}
&=
\Norm{ u }^2_{H^{\ell-1}\Par{\varOmega}}
+
\sum_{k=\ell}^m \Seminorm{ u }^2_{H^{k,\ell}_{\varTheta, \beta}\Par{\varOmega}}
\quad\text{for}\quad
\quad
u\in H^{m,\ell}_{\varTheta, \beta}\Par{\varOmega}
,
\quad
m \geq \ell \geq 1
,
\end{split}
\end{equation}
we define $\Norm{ \cdot }^2_{H^{m,\ell}_{\varTheta, \beta}\Par{\varOmega}}$,
a norm on $H^{m,\ell}_{\varTheta, \beta}\Par{\varOmega}$ for any $\ell,m\in\Nz$ such that $m \geq \ell$.

\begin{definition}[%
	analyticity of a function with point algebraic singularities,
	with positive constants $M$ and $\rho$%
  ]\label{Df:Clbeta}
Let $\beta\in\IntCO{0}{1}$, $\varOmega\subset\R^2$ be a polygonal domain,
$\varTheta$
be a finite set of distinct points in $\overline{\varOmega}$
and
$\ell\in\Set{1,2}$.
Then
$u\in  C^\ell_{\varTheta, \beta}(\varOmega)$
if $u\in H^{\ell,\ell}_{\varTheta, \beta}(\varOmega)$
and
there exist positive constants $M$ and $\rho$ such that,
for all $\alpha\in\N_0^2$ with $|\alpha| \geq \ell-1$,
\begin{equation}\nonumber
	\sup_{x \in \varOmega}
	\;
	\chi_{\varTheta}^{\beta + |\alpha| - \ell +1} (x) \,
	\big|\partial^{\alpha} u (x) \big|
	\leq
	M \rho^{|\alpha|} \QQ
	|\alpha|!
	\; .
\end{equation}
\end{definition}

The following result is a consequence
of~\cite[Theorems~3.4--3.5]{Babuska:1988:hpCurvedBoundary}
for the iterated-homogenization scheme of~\ref{Eq:wiw}--\ref{eq:uivec2grad}.
\begin{proposition}\label{Pr:u0-C2beta-anlyticity}
	Assume that $\varTheta$ is the set of vertices of the unit square $D=(0,1)^2$.
	Let Assumptions~\ref{As:Coeff} and~\ref{ass:Analytic1} hold.
	Then the solution $u_0$ of the homogenized problem~\eqref{eq:Phom}
	satisfies $u_0 \in C^2_{\varTheta, \beta}(D)$
	with some $\beta \in [0,1)$.
\end{proposition}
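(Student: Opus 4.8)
The plan is to reduce the statement to the classical analytic regularity theory for scalar second-order elliptic problems on polygons with analytic data, the essential preliminary being to verify that the effective coefficient $A_0$ of \eqref{eq:DefA0} is real-analytic on $\overline{D}$. Once this analyticity is established, the homogenized problem \eqref{eq:Phom} is exactly a homogeneous Dirichlet problem for the divergence-form operator $-\nabla\cdot(A_0\nabla\,\cdot\,)$ on the unit square, with analytic coefficient $A_0$ and analytic right-hand side, so that Theorems~3.4--3.5 of \cite{Babuska:1988:hpCurvedBoundary} apply and yield $u_0\in C^2_{\varTheta,\beta}(D)$ for some $\beta\in\IntCO{0}{1}$ in the sense of Definition~\ref{Df:Clbeta}. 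The admissible $\beta$ is dictated by the corner exponents at the four vertices $\varTheta$, which for the interior angles $\pi/2$ of the square are well separated from zero; since $D$ is convex, one can choose a single $\beta<1$ simultaneously at all corners, and the boundary data (homogeneous Dirichlet) and geometry match the hypotheses of that theory verbatim.

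The core of the argument is therefore the analyticity of $A_0$, which I would prove by downward induction on the homogenization step $i=n,\ldots,1$, showing that the upscaled coefficient $A_{i-1}$ produced by \eqref{Eq:CoeffUpscaling} inherits analyticity with respect to the slow variables $(x,\bm{y}_{i-1})$ together with one-periodicity in $\bm{y}_{i-1}$. The inductive hypothesis, that $A_i$ is analytic on $\overline{D\times\bm{Y}_i}$ and one-periodic in the last $id$ variables, holds for $i=n$ by Assumption~\ref{ass:Analytic1}. For the inductive step I would view the cell problem \eqref{Eq:wi} as a family of uniformly coercive variational problems on $H^1_{\#}(Y_i)/_\R$ parametrized by $(x,\bm{y}_{i-1})\in\overline{D\times\bm{Y}_{i-1}}$, whose bilinear form $\mathsf{b}_i$ and load $\mathsf{f}_i$ depend analytically on these parameters precisely because $A_i$ does. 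Analytic dependence of the solution of a boundedly invertible, parameter-analytic linear problem then furnishes the corrector $w_i$, and hence $\Jac_i w_i$, as analytic $H^1_{\#}(Y_i)/_\R$- and $L^2(Y_i)$-valued functions of $(x,\bm{y}_{i-1})$. Integrating the analytic integrand of \eqref{Eq:CoeffUpscaling} over the fixed cell $Y_i$ preserves analyticity in the remaining parameters and produces $A_{i-1}$, with one-periodicity in $\bm{y}_{i-1}$ inherited from $A_i$. Iterating from $i=n$ down to $i=1$ yields $A_0$ analytic on $\overline{D}$.

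The hard part will be making this ``analytic dependence on parameters'' rigorous in the Banach-space-valued setting, with the factorial-type derivative bounds $\chi_{\varTheta}^{\beta+|\alpha|-\ell+1}|\partial^\alpha u|\le M\rho^{|\alpha|}\,|\alpha|!$ required by Definition~\ref{Df:Clbeta}, rather than mere $C^\infty$ dependence. The clean route is to complexify: since $A_i$ is real-analytic, extend it to a holomorphic matrix-valued function on a complex tube around $\overline{D\times\bm{Y}_{i-1}}$; on a sufficiently thin tube the perturbation of the coefficient away from its real, uniformly positive-definite values is small, so the complexified form $\mathsf{b}_i$ remains bounded and invertible (its real part stays coercive). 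Lax--Milgram with complex coefficients then makes the solution map $(x,\bm{y}_{i-1})\mapsto w_i$ holomorphic into the complexified energy space, and Cauchy's estimates on polydiscs inside the tube deliver exactly the $\rho^{|\alpha|}\,|\alpha|!$ bounds. These pass through the $Y_i$-integration in \eqref{Eq:CoeffUpscaling} and thus propagate down the recursion \eqref{Eq:wiw} to $A_0$.

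A secondary technical point is to confirm that the right-hand side of the homogenized problem is analytic on $\overline{D}$: in the simplest model $f$ is analytic and independent of the fast variables, while in the general setting of Assumption~\ref{ass:Analytic1} the homogenized forcing is the average of the analytic $f$ over $\bm{Y}_n$, which is again analytic because integrating an analytic integrand over a fixed cell is analyticity-preserving. With $A_0$ and the forcing both analytic on $\overline{D}$ and the corner structure of the square identified with $\varTheta$, the invocation of \cite[Theorems~3.4--3.5]{Babuska:1988:hpCurvedBoundary} completes the proof.
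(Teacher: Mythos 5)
Your proposal is correct and takes essentially the same route as the paper: the paper proves this proposition by exactly the reduction you describe, citing \cite[Theorems~3.4--3.5]{Babuska:1988:hpCurvedBoundary} for the homogenized Dirichlet problem~\eqref{eq:Phom} once the upscaled coefficient $A_0$ from the iterated scheme~\eqref{Eq:wiw}--\eqref{eq:uivec2grad} is known to be analytic on $\overline{D}$, the latter being obtained (as the paper indicates in Section~\ref{sec:approx-acc}) by iterating the parametric cell problems under Assumption~\ref{ass:Analytic1}. Your complexification-plus-Cauchy-estimates treatment of the parameter-analytic cell problems is a standard and fully adequate way to make rigorous what the paper phrases as ``differentiating the equation expressing the cell problem in the strong form iteratively,'' and your observation on the admissible $\beta<1$ agrees with the paper's remark that $\beta$ is determined by the transformation diagonalizing $A_0$ at the vertices of $D$.
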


Note that this statement remains valid for an arbitrary curvilinear polygon
with an analytic boundary~\cite{Babuska:1988:hpCurvedBoundary}.
However, even in the setting of Assumption~\ref{ass:Analytic1},
the exponent $\beta$ depends on the transformation diagonalizing the diffusion
coefficient at the vertices of $D$ and can be estimated in terms of
the spectral bounds $\gamma$ and $\Gamma$.

We will now combine
the weighted-analyticity statement of Proposition~\ref{Pr:u0-C2beta-anlyticity}
with rank bounds for the QTT-FE approximation
of functions from
$C^2_{\varTheta, \beta}(D)$ in~\cite{Kazeev:PhD,KS:2017:QTTFE2d}.
\begin{theorem}\label{Th:QTT-FE-exp-C2beta}
Assume that $\beta \in [0,1)$ and
$\varTheta$
is the set of the vertices of $D$.
Let $u_0 \in  V \cap C^2_{\varTheta, \beta}(D)$.
Then the following holds with positive constants $C$ and $c$.

For every $L\in\N$,
there exist $u_0^L \in V^L$ and $v_0^L \in (\bar{V}^L)^d$
satisfying Assumption~\ref{ass:u0-low-rank} with $r_L = \Ceil{cL^2}$
and such that
$
	\Norm {u_0 - u_0^L }_{H^1(D)}
	\, , \,
	\Norm{ \nabla u_0 - v_0^L}_{L^2(D)^d}
	\leq
	C L^3 \,
	2^{-(1-\beta) L}
$.

\end{theorem}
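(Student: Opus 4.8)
The plan is to derive both asserted approximations directly from the QTT finite-element approximation theory for weighted-analytic functions on polygons developed in~\cite{Kazeev:PhD,KS:2017:QTTFE2d}, specialized here to the square $D = \IntOO{0}{1}^2$ and to the corner-singularity class $C^2_{\varTheta,\beta}(D)$. First I would invoke the piecewise-linear part of that theory for the hypothesis $u_0 \in V \cap C^2_{\varTheta,\beta}(D)$: for each $L\in\N$ it produces $u_0^L \in V^L$ whose coefficient tensor $\varPsi^L u_0^L$ admits a decomposition of the form~\eqref{eq:TT-MPS} with ranks bounded by $\Ceil{cL^2}$ and with $\Norm{u_0 - u_0^L}_{H^1(D)} \leq C \, L^3 \, 2^{-(1-\beta)L}$. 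This settles the first error bound and fixes the rank growth $r_L = \Ceil{cL^2}$.

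For the gradient, I would first observe that differentiating the weighted-analyticity estimates of Definition~\ref{Df:Clbeta} once lowers the order from $\ell=2$ to $\ell=1$ while leaving the weight exponent $\beta$ unchanged, so that each component $\partial_k u_0$ of $\nabla u_0$ belongs to $C^1_{\varTheta,\beta}(D)$; the single extra factorial factor incurred by differentiating is absorbed into the analyticity constants $M$ and $\rho$. I would then set $v_0^L = \bar\varPi_0^L \nabla u_0 \in (\bar V^L)^d$, the piecewise-constant $L^2$-projection~\eqref{eq-def-proj-op} of the exact gradient, and apply the piecewise-constant part of the approximation theory of~\cite{Kazeev:PhD,KS:2017:QTTFE2d} componentwise to each $\partial_k u_0 \in C^1_{\varTheta,\beta}(D)$. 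This furnishes coefficient tensors $\bar\varPsi_0^L v_{0,k}^L$ of QTT ranks at most $\Ceil{cL^2}$ together with the bound $\Norm{\nabla u_0 - v_0^L}_{L^2(D)^d} \leq C \, L^3 \, 2^{-(1-\beta)L}$, after enlarging the polynomial prefactor of the piecewise-constant estimate to $L^3$ if necessary and passing to a common pair of constants $C$ and $c$.

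It then remains to verify Assumption~\ref{ass:u0-low-rank} for the subspace $\mathscr{S}^L = \Span\Set{\bar\varPsi_0^L v_{0,k}^L}_{k=1}^d$. Fixing a splitting level $\ell\in\Set{1,\ldots,L-1}$, the rank bound just obtained means that each of the $d=2$ vectors $\bar\varPsi_0^L v_{0,k}^L$ lies in a product $\mathscr{L}^{(k)}_\ell \otimes \mathscr{M}^{(k)}_\ell$ of subspaces of $\C^{2^{d\ell}}$ and $\C^{2^{d(L-\ell)}}$, each of dimension at most $\Ceil{cL^2}$. Setting $\mathscr{L}^L_\ell = \sum_k \mathscr{L}^{(k)}_\ell$ and $\mathscr{M}^L_\ell = \sum_k \mathscr{M}^{(k)}_\ell$, both of dimension at most $d \, \Ceil{cL^2}$, yields the required containment $\mathscr{S}^L \subset \mathscr{L}^L_\ell \otimes \mathscr{M}^L_\ell$; absorbing the factor $d=2$ into $c$ reinstates the rank $r_L = \Ceil{cL^2}$ demanded by the assumption.

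The step I expect to be most delicate is one of translation rather than of fresh analysis: the cited references phrase their accuracy and rank results in their own finite-element bases and tensorization conventions, whereas here the low-rank requirement is expressed through the analysis operators $\bar\varPsi_0^L$ and the \emph{joint} span $\mathscr{S}^L$ of the $d$ gradient components. The key point to confirm is that the per-component QTT ranks supplied by~\cite{Kazeev:PhD,KS:2017:QTTFE2d} really do control the tensor-product containment of $\mathscr{S}^L$ at every level $\ell$ --- at the cost of only the benign factor $d$ --- and that the piecewise-linear approximation of $u_0$ and the piecewise-constant approximation of $\nabla u_0$ are mutually compatible, so that a single exponent $1-\beta$, a single prefactor $L^3$ and a single pair $(C,c)$ serve both error bounds and the rank bound at once.
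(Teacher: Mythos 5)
Your treatment of $u_0^L$ coincides with the paper's (both invoke \cite[Theorem~5.3.7]{Kazeev:PhD}, \cite[Theorem~5.16]{KS:2017:QTTFE2d}), and your regularity observation that $\partial_k u_0 \in C^1_{\varTheta,\beta}(D)$ is correct and standard. The gap is in your construction of $v_0^L$. You set $v_0^L = \bar\varPi_0^L \nabla u_0$, the exact $L^2$-orthogonal projection of the exact gradient, and then claim rank bounds for it from a ``piecewise-constant part'' of the cited theory. Two problems. First, Assumption~\ref{ass:u0-low-rank} must be verified for the \emph{specific} $v_0^L$ you choose, and an orthogonal projection of a generic function carries no a priori low-rank structure: the existence of \emph{some} low-rank piecewise-constant approximant with error $\epsilon$ does not imply that the $L^2$-projection itself lies in a low-dimensional tensor-product subspace $\mathscr{L}^L_\ell \otimes \mathscr{M}^L_\ell$ at every splitting level $\ell$; in general $\bar\varPsi_0^L \, \bar\varPi_0^L \nabla u_0$ is full-rank. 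Second, the references you lean on do not supply an off-the-shelf rank-plus-error theorem for piecewise-constant approximation of $C^1_{\varTheta,\beta}$ functions; their rank bounds are attached to an explicit hand-built piecewise-\emph{linear} approximant of a $C^2_{\varTheta,\beta}$ function. So the step you flagged as ``translation'' is in fact the missing analysis, and as stated the argument would fail exactly where the rank bound for $\mathscr{S}^L$ is needed.

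The paper closes this hole differently, and you should adopt its device: rather than projecting the exact gradient, it \emph{post-processes the already-constructed low-rank function} $u_0^L$, setting
$
	v_{0,k}^L
	=
	\CuBr[1]{
		\Par{ \bar{\pi}^L }^{\otimes (k-1)}
		\otimes
		\mathsf{id}
		\otimes
		\Par{ \bar{\pi}^L }^{\otimes (d-k)}
	}
	\partial_k u_0^L
$,
bounding $\Norm{\nabla u_0^L - v_0^L}_{L^2(D)^d} \lesssim 2^{-L} \Seminorm{u_0^L}_{H^1(D)}$ by Proposition~\ref{Pr:Appr01}-type estimates and concluding via the triangle inequality with the already-proved $H^1$ bound for $u_0 - u_0^L$. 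The point is structural: the discrete operators $\partial_k$ (acting on a piecewise-linear FE function) and $\bar{\pi}^L$ act on the coefficient tensor, up to scaling, as single-position shifts combined by addition and subtraction, which preserve the piecewise-polynomial structure underlying the rank analysis of \cite[Lemma~4.6.1, Corollary~4.6.2]{Kazeev:PhD} and \cite[Lemma~5.13, Corollary~5.14]{KS:2017:QTTFE2d}. Hence $\bar\varPsi_0^L v_{0,k}^L$ lies in the \emph{identical} subspaces $\hat{\mathscr{L}}^L_\ell \otimes \hat{\mathscr{M}}^L_\ell$ constructed for $\varPsi_0^L u_0^L$, for every $k$ simultaneously, so Assumption~\ref{ass:u0-low-rank} holds with $r_L = \Ceil{cL^2}$ with no inflation. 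Your final combination step (summing per-component subspaces and absorbing the factor $d$ into $c$) is sound and would be acceptable if you had valid per-component rank bounds, but it is also unnecessary once $v_0^L$ is derived from $u_0^L$ as above; what cannot be repaired by adjusting constants is the unsupported rank claim for the exact $L^2$-projection.
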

\begin{proof}
	The statement regarding $u_0^L$ with $L\in\N$ follows immediately
	from either~\cite[Theorem~5.3.7]{Kazeev:PhD}
	or~\cite[Theorem~5.16]{KS:2017:QTTFE2d}.
	In particular,
	for all $L\in\N$ and $\ell\in\Set{0,1,\ldots,L}$,
	there exist subspaces
	$\hat{\mathscr{L}}^L_\ell\subset\R^{2^{d\ell}}$ and
	$\hat{\mathscr{M}}^L_\ell\subset\R^{2^{d(L-\ell)}}$, both
	of dimension at most $r_L = \Ceil{cL^2}$,
	where $c$ is a positive constant independent of $L$, such that
	$\varPsi_0^L u_0^L \in \hat{\mathscr{L}}^L_\ell \otimes \hat{\mathscr{M}}^L_\ell$.

	To obtain the statement regarding $v_0^L$ with $L\in\N$,
	we consider
	$
		v_{0,k}^L
		=
		\CuBr[1]{
			\Par{ \bar{\pi}^L }^{\otimes (k-1)}
			\otimes
			\mathsf{id}
			\otimes
			\Par{ \bar{\pi}^L }^{\otimes (d-k)}
		}
		\,
% 		\circ
		\partial_k
		u_0
		\in \bar{V}^L
	$
	for all $k\in\Set{1,\ldots,d}$ and $L\in\N$.
	Bounds analogous to those of Proposition~\ref{Pr:Appr01}
	yield
	$
		\Norm{ \nabla u_0^L - v_0^L}_{L^2(D)^d}
		\lesssim
		2^{-L}
		\Seminorm{u_0^L}_{H^1(D)}
	$
	with an equivalence constant independent of $L\in\N$.
	Then the triangle inequality gives the error bound claimed for
	$v_0^L$.
	Further, the action of the operators $\bar{\pi}^L$ and $\partial_k$,
	up to scaling,
	consists in adding to and subtracting from the coefficient tensor
	its single-position
	shift along the respective dimension,
	which preserves the piecewise-polynomial structure used to establish
	rank bounds in~\cite[Lemma~4.6.1 and Corollary~4.6.2]{Kazeev:PhD}
	and in~\cite[Lemma~5.13 and Corollary~5.14]{KS:2017:QTTFE2d}.
	Inspecting those proofs, one concludes that
	the rank analysis given there applies verbatim to
	$v_{0,k}^L$ with $k\in\Set{1,\ldots,d}$ and $L\in\N$:
	for every $\ell\in\Set{0,1,\ldots,L}$, we have
	$\bar\varPsi_0^L v_{0,k}^L \in \hat{\mathscr{L}}^L_\ell \otimes \hat{\mathscr{M}}^L_\ell$,
	where the subspaces $\hat{\mathscr{L}}^L_\ell$ and $\hat{\mathscr{M}}^L_\ell$
	are identical to those constructed in the same proofs for $\varPsi_0^L u_0^L$.
	This shows that $u_0^L$ and $v_0^L$
	satisfy Assumption~\ref{ass:u0-low-rank} with $r_L = \Ceil{cL^2}$.
\end{proof}

The following is a corollary of
Theorems~\ref{thm:limit-problem}, \ref{thm:limit-problem-avg}
and~\ref{Th:QTT-FE-exp-C2beta}
and Proposition~\ref{Pr:u0-C2beta-anlyticity}.
\begin{corollary}\label{cor:2d}
	Assume that $D=(0,1)^2$.
	Let Assumptions~\ref{As:Coeff}, \ref{ass:Analytic1} and~\ref{ass:tech} hold
	and $\Tuple{u_0,u_1,\ldots,u_n} \in \bm{V}_n$ be the solution of~\eqref{limeqn}
	and $v_n = \nabla u_0 + \nabla_1 u_1 + \cdots + \nabla_n u_n$.
	Then the approximations
	$u_i^L \in V_i^L$ with $L\in\N$ and $i\in\Set{1,\ldots,n}$
	and
	$\mathcal{U}^\varepsilon \QQ  v_n^L \in (\bar{V}^{\lambda_n+L})^d$
	with $L\in\N$,
	defined by~\eqref{eq:uvi-approx} and~\eqref{Eq:wi-pol-approx},
	satisfy the following
	with $\beta\in\IntCO{0}{1}$ and
	with positive constants $\tilde{C}$ and $\tilde{c}$.

	For all $L\in\N$ sufficiently large (to satisfy Assumption~\ref{ass:tech}),
	the error bound
	$
		\sum_{i=0}^n \Norm{u_i - u_i^L}_{V}
		+
		\Norm{
			\mathcal{U}^\varepsilon \QQ v_n - \mathcal{U}^\varepsilon \QQ  v_n^L
		}_{L^2(D)^d}
		\leq
		\tilde{C}
		\,
		L^3
		\,
		2^{-(1-\beta) L}
	$
	holds and each of the coefficient tensors
	$\varPsi_i^L u_i^L$ with $i\in\Set{0,\ldots,n}$
	and
	$\bar{\varPsi}_0^{\lambda_n+L} \, \mathcal{U}^\varepsilon \QQ v_n^L$
	admit decompositions of the form~\eqref{eq:TT-MPS}
	with ranks bounded from above by
	$R_L = \tilde{c} \QQ L^{2(n+2)}$.
\end{corollary}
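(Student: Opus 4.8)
The plan is to obtain Corollary~\ref{cor:2d} as a bookkeeping composition of four results already in hand: the weighted-analyticity statement of Proposition~\ref{Pr:u0-C2beta-anlyticity}, the two-dimensional QTT-FE rank-and-accuracy bound of Theorem~\ref{Th:QTT-FE-exp-C2beta}, and the general approximation theorems~\ref{thm:limit-problem} and~\ref{thm:limit-problem-avg}, the latter two instantiated with fully quantified exponents. First I would invoke Proposition~\ref{Pr:u0-C2beta-anlyticity}: since $D=\IntOO{0}{1}^2$ and Assumptions~\ref{As:Coeff} and~\ref{ass:Analytic1} hold, the homogenized solution satisfies $u_0\in C^2_{\varTheta,\beta}(D)$ for some $\beta\in\IntCO{0}{1}$, where $\varTheta$ is the vertex set of $D$. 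As $u_0\in V = H^1_0(D)$, this places $u_0\in V\cap C^2_{\varTheta,\beta}(D)$, which is exactly the hypothesis of Theorem~\ref{Th:QTT-FE-exp-C2beta}.

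Next I would apply Theorem~\ref{Th:QTT-FE-exp-C2beta} to produce, for every $L\in\N$, approximations $u_0^L\in V^L$ and $v_0^L\in(\bar{V}^L)^d$ which satisfy Assumption~\ref{ass:u0-low-rank} with $r_L = \Ceil{cL^2}$ and which obey $\Norm{u_0-u_0^L}_{H^1(D)}, \Norm{\nabla u_0 - v_0^L}_{L^2(D)^d}\le C L^3\,2^{-(1-\beta)L}$. With $v_0 = \nabla u_0$ this is precisely the data required by Theorems~\ref{thm:limit-problem} and~\ref{thm:limit-problem-avg}: the accuracy bound on $\Norm{v_0 - v_0^L}_{L^2(D)^d}$ holds with $\gamma_0 = 3$ and $\alpha = 1-\beta$, and since $\beta\in\IntCO{0}{1}$ we have $\alpha\in\IntOC{0}{1}$ as those theorems demand, while Assumption~\ref{ass:u0-low-rank} holds with the stated $r_L$. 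Assumptions~\ref{As:Coeff}, \ref{ass:Analytic1} and~\ref{ass:tech} are inherited from the corollary's hypotheses.

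I would then feed these ingredients into Theorem~\ref{thm:limit-problem} (for $u_i^L$ with $i\in\Set{1,\ldots,n}$) and Theorem~\ref{thm:limit-problem-avg} (for $\mathcal{U}^\varepsilon v_n^L$). Both yield $\tilde\gamma = \max\Set{2,\gamma_0} = 3$, so each of the errors $\Norm{u_i - u_i^L}_V$ and $\Norm{\mathcal{U}^\varepsilon v_n - \mathcal{U}^\varepsilon v_n^L}_{L^2(D)^d}$ is bounded by $\tilde{C}\,L^3\,2^{-(1-\beta)L}$; the $i=0$ contribution $\Norm{u_0-u_0^L}_{V}$ is already furnished by Theorem~\ref{Th:QTT-FE-exp-C2beta}. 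Summing the $n+1$ error terms (with $n$ fixed) absorbs the factor $n+1$ into the constant and preserves the claimed form. For the ranks, both theorems give $R_L = \tilde{c}\,L^{(n+1)d}\,r_L$; substituting $d=2$ and $r_L = \Ceil{cL^2}$ gives $L^{2(n+1)}\cdot O(L^2) = O(L^{2(n+2)})$, i.e. the claimed $R_L = \tilde{c}\,L^{2(n+2)}$. The remaining coefficient tensor $\varPsi_0^L u_0^L$ has ranks at most $r_L = O(L^2)\le R_L$ by the subspace factorization established in Theorem~\ref{Th:QTT-FE-exp-C2beta}.

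There is essentially no genuine obstacle here, as the statement is a composition of established results; the only care needed is in matching constants and exponents. The three points to verify are that $\alpha = 1-\beta$ lands in the admissible range $\IntOC{0}{1}$ required by Theorems~\ref{thm:limit-problem} and~\ref{thm:limit-problem-avg}, that the polynomial power $\tilde\gamma$ collapses to $3$ because $\gamma_0 = 3 \ge 2$, and that the rank arithmetic $L^{(n+1)d}\cdot L^2$ with $d=2$ produces exactly the exponent $2(n+2)$. No new estimate is introduced beyond those already proved.
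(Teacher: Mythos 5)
Your proposal is correct and matches the paper's own route exactly: the paper presents Corollary~\ref{cor:2d} precisely as a composition of Proposition~\ref{Pr:u0-C2beta-anlyticity}, Theorem~\ref{Th:QTT-FE-exp-C2beta} (supplying $v_0^L$ with $\gamma_0=3$, $\alpha=1-\beta$ and $r_L=\Ceil{cL^2}$), and Theorems~\ref{thm:limit-problem} and~\ref{thm:limit-problem-avg}, with the same rank arithmetic $L^{(n+1)d}\,r_L = O(L^{2(n+2)})$ for $d=2$. Your three verification points (admissibility of $\alpha\in\IntOC{0}{1}$, the collapse $\tilde\gamma=\max\Set{2,3}=3$, and the exponent bookkeeping) are exactly the details the paper leaves implicit.
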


\section{Numerical results}
\label{sc:NumReS}

We implement two approaches for the approximate numerical solution of
the multiscale problem~\eqref{Eq:ProblemEps}.

The first approach consists in immediately solving
the discretization~\eqref{Eq:ProblemEpsDiscr}
of the multiscale problem~\eqref{Eq:ProblemEps},
seeking the solution in the form of
the multilevel TT-MPS decomposition~\eqref{eq:TT-MPS}.
The implementation is based on the recent result~\cite{BK:2020:StabPrec}
on the preconditioning of elliptic second-order operators,
which allows to avoid the ill-conditioning and numerical instability
associated with the use of fine discretizations (large $L$)
and with using the multilinear decomposition~\eqref{eq:TT-MPS}
instead of storing all the entries of the coefficient tensor independently.
We modified the BPX preconditioner developed in~\cite{BK:2020:StabPrec},
following the original derivations¸
so as to accommodate the Dirichlet boundary conditions of~\eqref{Eq:ProblemEps},
imposed on the whole of the boundary.

The second approach consists in solving
the high-dimensional one-scale limit problem~\eqref{limeqn}
in a form analogous to~\eqref{eq:TT-MPS}
and computing $u_0^L$ and $\mathcal{U}^\varepsilon \QQ  v_n^L$
that approximate
$u^\varepsilon$ and $\nabla u^\varepsilon$
in the respective $L^2$ norms.

We emphasize that the first approach bypasses the limit problem~\eqref{limeqn}
and aims at solving directly discretizations of the
multiscale problem~\eqref{Eq:ProblemEps}.
The second approach, on the contrary, explicitly involves the limit problem
as an auxiliary computational problem.
Neither approach requires the computation of effective (or ``homogenized'',
``upscaled'') coefficients.
We cover the second approach only in some of the experiments,
for reference and comparison.
We did not incorporate the BPX preconditioner
developed in~\cite{BK:2020:StabPrec} in the second approach,
so it can be used only for relatively coarse virtual grids (up to $L=15$ when $d=1$).
The source code
developed for our numerical experiments
is publicly available\footnote{\url{https://bitbucket.org/rakhuba/msqtt2d_numexp}}.

\subsection{Two scale problem, $n=1$, $d=1$}
\label{sc:2Scale}
We start with an instance of the problem~\eqref{eq:Aeps}--\eqref{Eq:ProblemEps}
with two scales, $D = Y = (0,1)$, $d=1$
and
 \begin{equation}\label{eq:1d_knownsol}
 	{d\over dx}
 	\left(A^{\varepsilon}(x){d u^\varepsilon\over dx}\right)=1
   \quad \mbox{in}\quad D, \qquad u^\varepsilon(0)=u^\varepsilon(1)=0\;,
 \end{equation}
 where 
\begin{equation}\label{eq:coef_1d}
	A\Par{x, y} = {2\over 3}(1+x)\Par[1]{1+\cos^2\Par{2\pi y}}
	\quad\text{for all}\quad
	x \in D
	\quad\text{and}\quad
	y\in Y
	\, ,
\end{equation}
see \cite[Section 6.1]{VHHCS2004}, where this problem was solved with a sparse-grid FEM approach.
The two-scale limiting equation has the exact (homogenized) solution
$u_0$ given by
\begin{equation}\label{eq:num_u0}
u_0(x)={3\over 2\sqrt{2}}\left(x-{\log(1+x)\over\log 2}\right)
\end{equation}
for all $x\in D$ and $y\in Y$
and the scale interaction term $u_1$ is given by
\begin{equation}\label{eq:num_u1}
	u_1(x,y)
	=
	{3\over 2\sqrt{2}}
	\left(1-{1\over(1+x)\log 2}\right)\left({1\over 2\pi}\tan^{-1}\left({\tan2\pi y\over\sqrt{2}}\right)-y+\phi(y) + C\right)
\end{equation}
for all $x\in D$ and $y\in Y$,
where $\phi$ is chosen to the ensure continuity of $u_1$:
\begin{equation}\label{eq:phifun}
	\phi(y) =
	\begin{cases}
		0, & y\in [0, 1/4] \\
		\frac 12, & y\in (1/4, 3/4] \\
		1, & y\in (3/4, 1].
	\end{cases}
\end{equation}

We consider two approaches to approximate solution of the problem~\eqref{eq:Aeps}--\eqref{Eq:ProblemEps}
with \eqref{eq:coef_1d}: QTT-FEM discretization of
the multiscale problem~\eqref{eq:Aeps}--\eqref{Eq:ProblemEps} with \eqref{eq:coef_1d}
and the QTT-FEM discretization of the corresponding high-dimensional limit problem~\eqref{limeqn}.
For the first one we introduce nested grids with $2^{\ell} - 1$ interior points
and the corresponding FE discretization using piecewise-linear hat functions.
For every $\ell$ the Galerkin solution is parametrized by
a $2^\ell$-component vector $\mathbf{u}^{\varepsilon,\ell}$,
including zero coefficient of the basis function at corresponding to node $1$.

The multidimensional limiting one-scale problem is discretized using tensor product basis functions with $2^\ell$ basis functions both for the physical variable $x$ and for fast variables $y_i$. 
This discretization produces
coefficient tensors
$\mathbf{u}^{\ell}_i \in \R^{2^{(i+1)\ell}}$ with $i\in\Set{0,1,\dots,n}$.

The goal is to find QTT approximations $\mathbf{u}^{\varepsilon,\ell}_\textrm{qtt}$ and $\mathbf{u}^{\ell}_{i,\textrm{qtt}}$
with $i=0,\ldots,n$
of
$\mathbf{u}^{\varepsilon,\ell}$ of the
multiscale problem
and $\mathbf{u}^{\ell}_i$
with $i=0,\ldots,n$
of the one-scale limit problem
respectively.
We denote the $\Seminorm{\cdot}_{H^1(D)}$ error corresponding to ${u}^{\varepsilon,\ell}_\textrm{qtt}$
as follows:
$
	\delta_\ell^\textrm{exact} = \Seminorm[1]{ {u}^{\varepsilon,\ell}_\textrm{qtt} - u^\varepsilon}_{H^1(D)}
$.
Since the exact solution $u^\varepsilon$ is not
available, we use instead the extrapolated solution
\begin{equation}\label{eq:extrap}
	u_\textrm{ext}^\varepsilon = 2 {u}^{\varepsilon,L}_\textrm{qtt} - {u}^{\varepsilon,L-1}_\textrm{qtt}
\end{equation}
with $L=50$.
In numerical experiments we therefore measure the following error:
$
	\Seminorm[1]{ {u}^{\varepsilon,\ell}_\textrm{qtt} - u^\varepsilon_\textrm{ext} }_{H^1(D)}
	\approx
	\Seminorm[1]{ {u}^{\varepsilon,\ell}_\textrm{qtt} - u^\varepsilon }_{H^1(D)}
$.

As for the one-scale limit problem corresponding to
the problem~\eqref{eq:Aeps}--\eqref{Eq:ProblemEps} with \eqref{eq:coef_1d},
we have its exact solution $(u_0,u_1)$ available through~\eqref{eq:num_u0}
and~\eqref{eq:num_u1}. So errors can be exactly computed as
$
	\tilde\delta_\ell = |||u_0 - u_0^\ell, \{u_i - u_i^\ell\}|||
$,
where
$
	|||u_0, \{u_i\}|||
	=
	\Norm[1]{\nabla u_0}_{L^{2}(D)}
	+
	\sum_{i=1}^n \Norm{\nabla_i u_i}_{L^{2}(D \times \bm{Y}_{1} \times \dots \times \bm{Y}_{i})}
$.

To find QTT approximations $\mathbf{u}^{\varepsilon,\ell}_\textrm{qtt}$
and $\mathbf{u}^{\ell}_{i,\textrm{qtt}}$ with $i\in\Set{0,1,\ldots,n}$,
we take the two approaches described in the beginning of Section~\ref{sc:NumReS}.
Figures~\ref{fig:conv_multscale} and~\ref{fig:conv_onescale} illustrate
convergence with respect to the virtual grid level $l$ for each of them.
In the both cases as anticipated we observe first order convergence.

\begin{figure}
\begin{subfigure}{.5\textwidth}
  \centering
  \includegraphics[width=\linewidth]{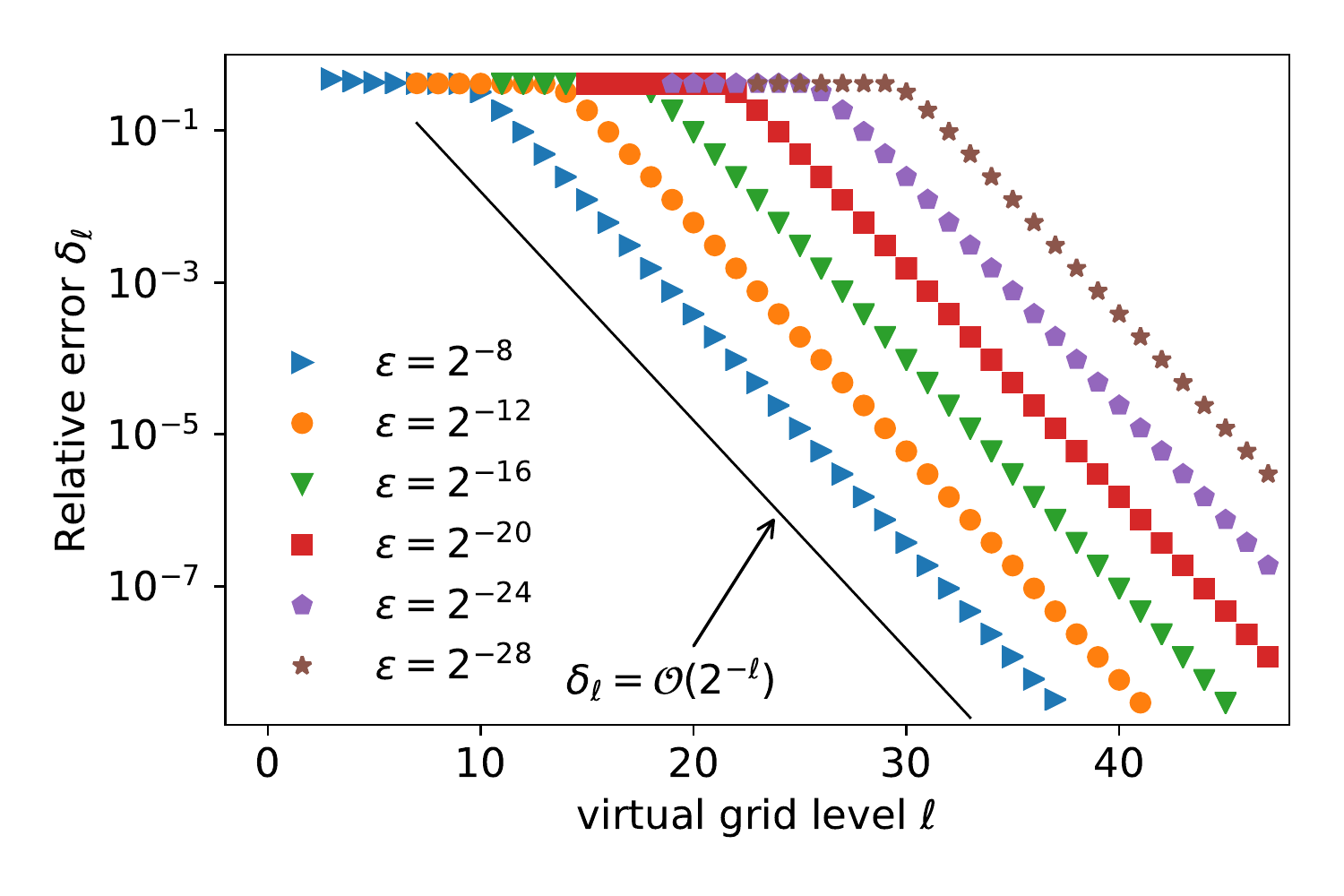}
  \caption{}
  \label{fig:conv_multscale}
\end{subfigure}%
\begin{subfigure}{.5\textwidth}
  \centering
  \includegraphics[width=\linewidth]{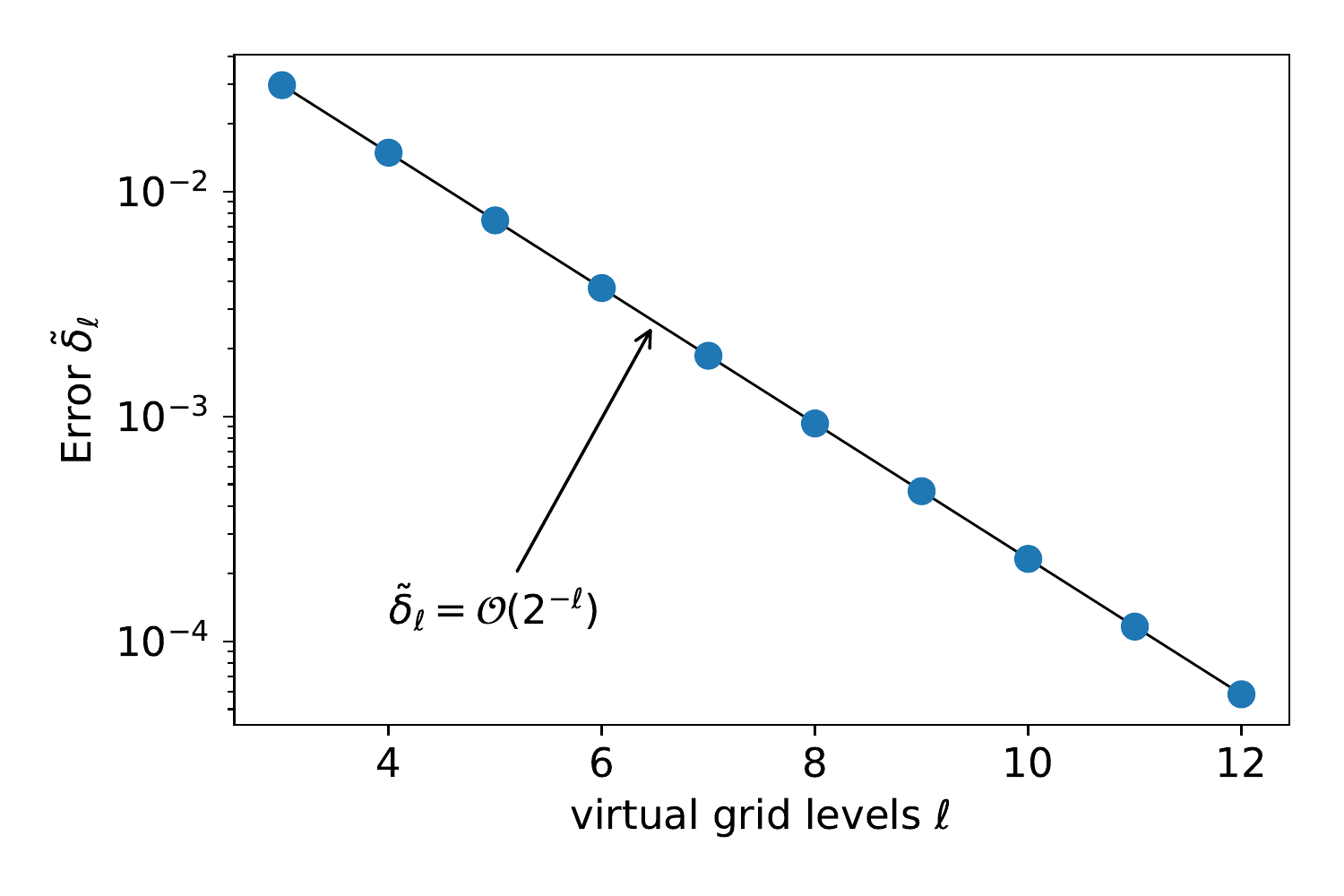}
  \caption{}
  \label{fig:conv_onescale}
\end{subfigure}
\caption{
  Error $\delta_\ell$, defined in~\eqref{eq:ErrDelell},
  w.r.t. the number of virtual grid levels for QTT-FEM for
  (a) the instance~\eqref{eq:1d_knownsol}
  of the multiscale problem~\eqref{Eq:ProblemEps}
  (with different values of the scale parameter $\varepsilon$)
  and
  (b) the one-scale limit problem~\eqref{limeqn}
  corresponding to the problem~\eqref{eq:Aeps}--\eqref{Eq:ProblemEps} with \eqref{eq:coef_1d}.
  Reference lines represent first-order convergence
  w.r.t. the meshwidth $h_\ell = 2^{-\ell}$.
}
\label{fig:QTTvsQT3}
\end{figure}

Next we investigate the QTT rank dependence of
$\mathbf{u}^{\varepsilon,\ell}_\textrm{qtt}$ of the QTT-FE solution of
the multiscale problem~\eqref{eq:Aeps}--\eqref{Eq:ProblemEps} with \eqref{eq:coef_1d}.
To this end, we first approximate $\mathbf{u}^{\varepsilon,\ell}$
by calculating $\mathbf{u}^{\varepsilon,\ell}_\textrm{qtt}$
with $10^{-12}$ tolerance of QTT arithmetic and \texttt{amen\_solve},
which is utilized to solve arising linear systems.
Then we calculate the error $\delta_\ell$
\begin{equation}\label{eq:ErrDelell}
	\delta_\ell = \left|{u}^{\varepsilon,\ell} - u^\varepsilon_\textrm{ext}\right|_{H^1(D)}.
\end{equation}
Finally, we calculate a sequence of truncated representations
$\texttt{round}(\mathrm{u}^{\varepsilon,\ell}, \texttt{tol})$ for different tolerance values $\texttt{tol}$.
We introduce notation
$\mathbf{u}^{\varepsilon,\ell}_\textrm{qtt}[\tau_\ell]
= \texttt{round}(\mathrm{u}^{\varepsilon,\ell}, \tau_\ell)$.
The goal is to find the largest $\tau_\ell$ so that the following inequality holds:
\begin{equation}\label{eq:tau}
	\left|{u}^{\varepsilon,\ell} - u^\varepsilon_\textrm{ext}\right|_{H^1(D)}
 \leq 2 \left|{u}^{\varepsilon,\ell}_\textrm{qtt}[\tau_\ell] - u^\varepsilon_\textrm{ext}\right|_{H^1(D)},
\end{equation}
where ${u}^{\varepsilon,\ell}_\textrm{qtt}[\tau_\ell]$ is the FE interpolant:
\[
{u}^{\varepsilon,\ell}_\textrm{qtt}[\tau_\ell]
= \sum_{j \in \mathcal{I}^\ell} \mathbf{u}^{\varepsilon,\ell}_\textrm{qtt}[\tau_\ell]_j\, \varphi_j^\ell
\]
Figure~\ref{fig:conv_multscale} presents the dependence of the rank
of $\texttt{round}(\mathrm{u}^{\varepsilon,\ell}, \tau_\ell)$
against the $H^1$ error~$\delta_\ell$:
\begin{equation}\label{eq:delta_l}
\delta_\ell = \left|{u}^{\varepsilon,\ell}_\textrm{qtt}[\tau_\ell] - u^\varepsilon_\textrm{ext}\right|_{H^1(D)}
\end{equation}

Next we investigate the QTT rank dependence of the
QTT-FEM solution $\mathbf{u}^{\ell}_\textrm{lim-qtt}$ of the
high-dimensional, one-scale limiting problem:
$
	u^{\ell}_{\textrm{lim}} = u_0^\ell + \mathcal{U}^\varepsilon u_1^\ell
$.
We set $\varepsilon = 2^{-\ell_\varepsilon}$ thus obtaining solution given by
coefficient tensor $\bold{u}^{\ell}_{\textrm{lim}}$ of length  $2^{\ell + \ell_\varepsilon}$,
which is approximated in QTT format by $\bold{u}^{\ell}_{\textrm{lim-qtt}}$.

For the both cases we observe polylogarithmic scaling of the effective QTT-rank
of both $\bold{u}^\ell_\textrm{qtt}$ and $\bold{u}^{\ell}_\textrm{lim-qtt}$
with respect to the error in $|\cdot|_{H^1(D)}$
or with respect to the truncation parameter $\tilde\delta$: 
\begin{equation} \label{eq:fitting}
	r = \mathcal{O}(\log^\kappa \delta^{-1}).
\end{equation}
In Figures \ref{fig:rank_mult_n1d1} and \ref{fig:rank_nodefect_n1d1} we fit the parameter $\kappa$.
Figure~\ref{fig:rank_mult_n1d1} illustrates that $\kappa$
barely depends on
$\varepsilon$.

\begin{figure}
{
	\centering
	\includegraphics[width=0.7\linewidth]{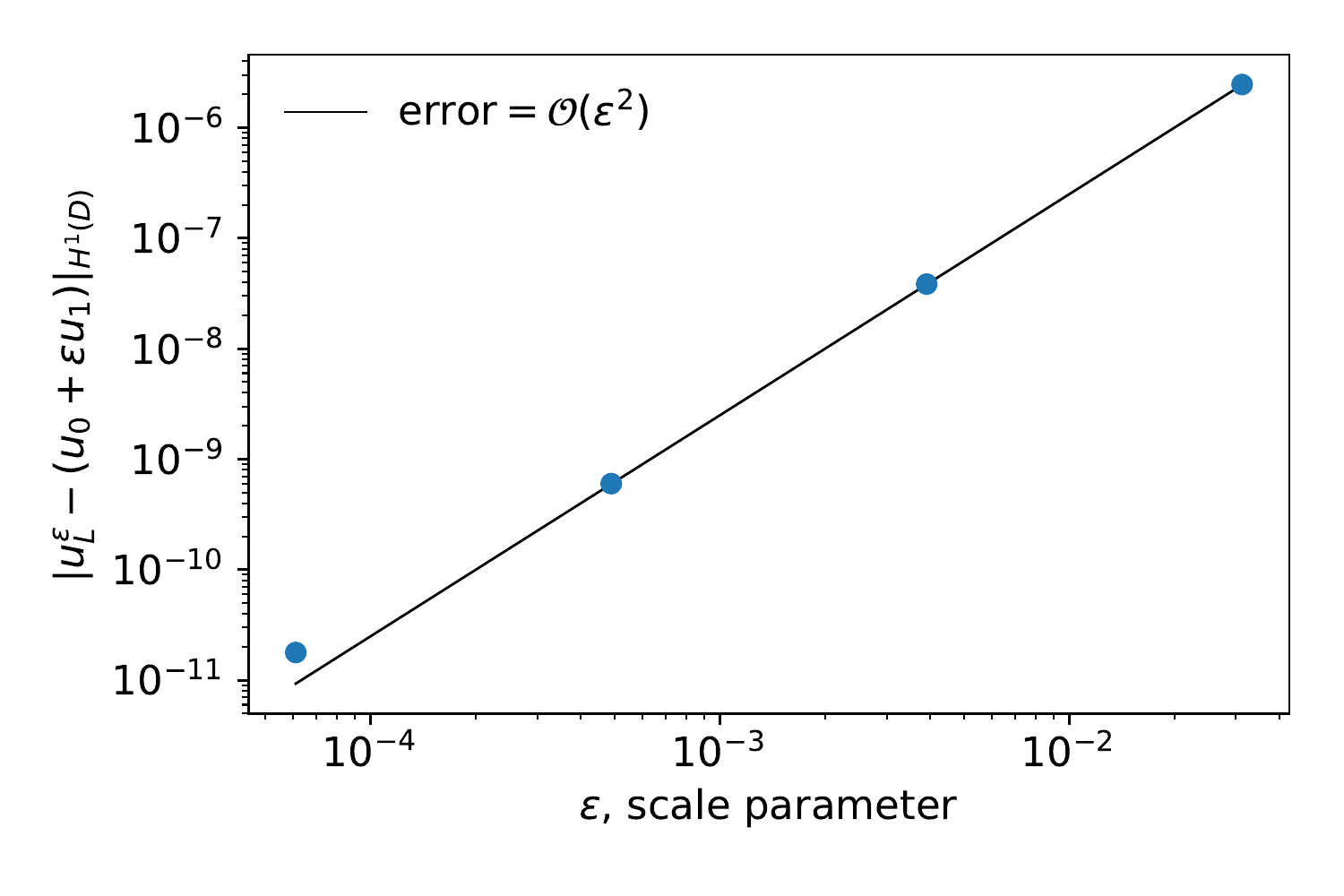}
	\caption{
		$H^1(D)$ error between the FE solution of two-scale limiting problem and FE solution
		$u^\varepsilon_L$, $L=50$ of the physical problem against scale parameter $\varepsilon$.
	}
	\label{fig:1d_homerr}
}
\end{figure}

\begin{figure}
\begin{subfigure}{.5\textwidth}
  \centering
  \includegraphics[width=\linewidth]{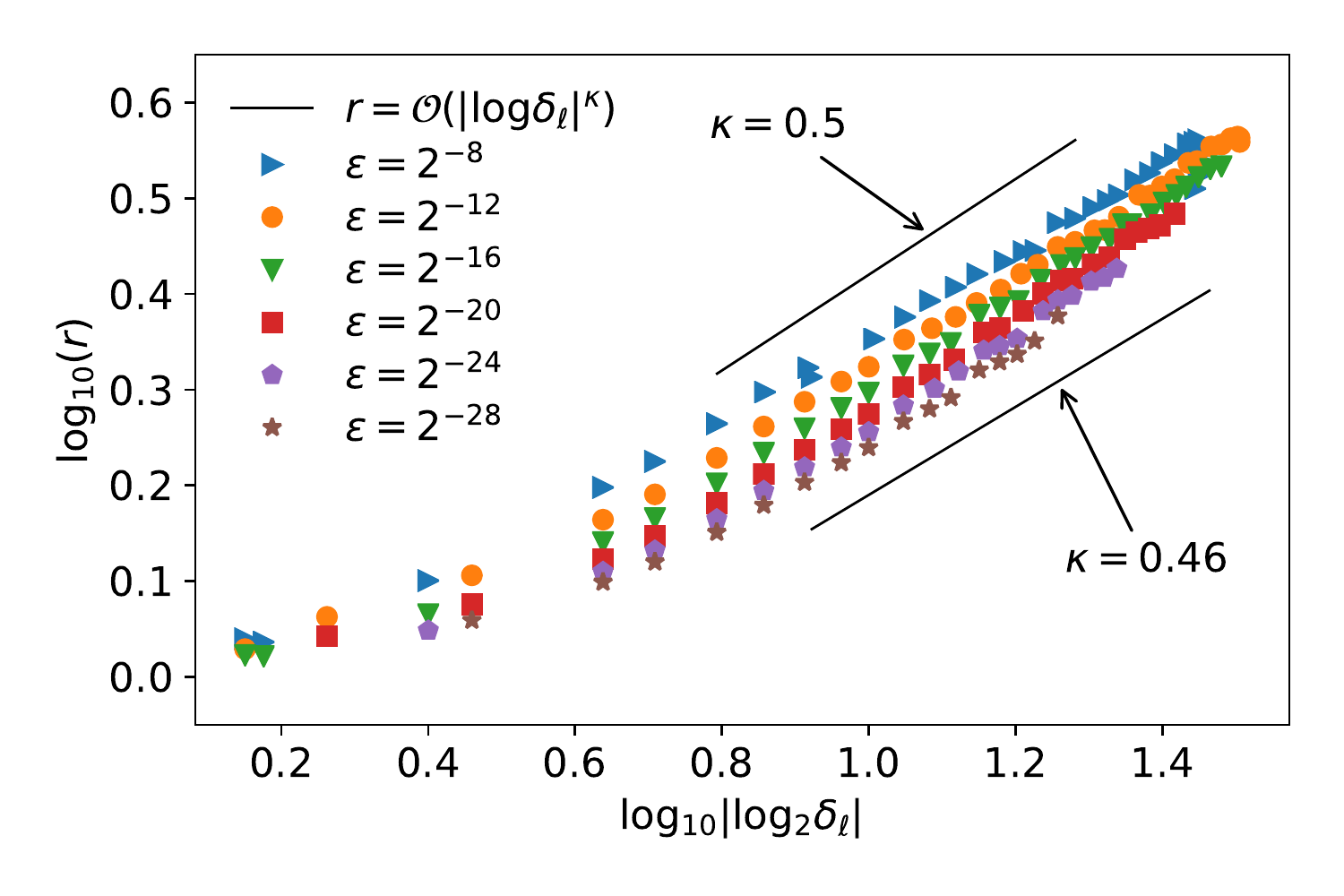}
  \caption{}
  \label{fig:rank_mult_n1d1}
\end{subfigure}%
\begin{subfigure}{.5\textwidth}
  \centering
  \includegraphics[width=\linewidth]{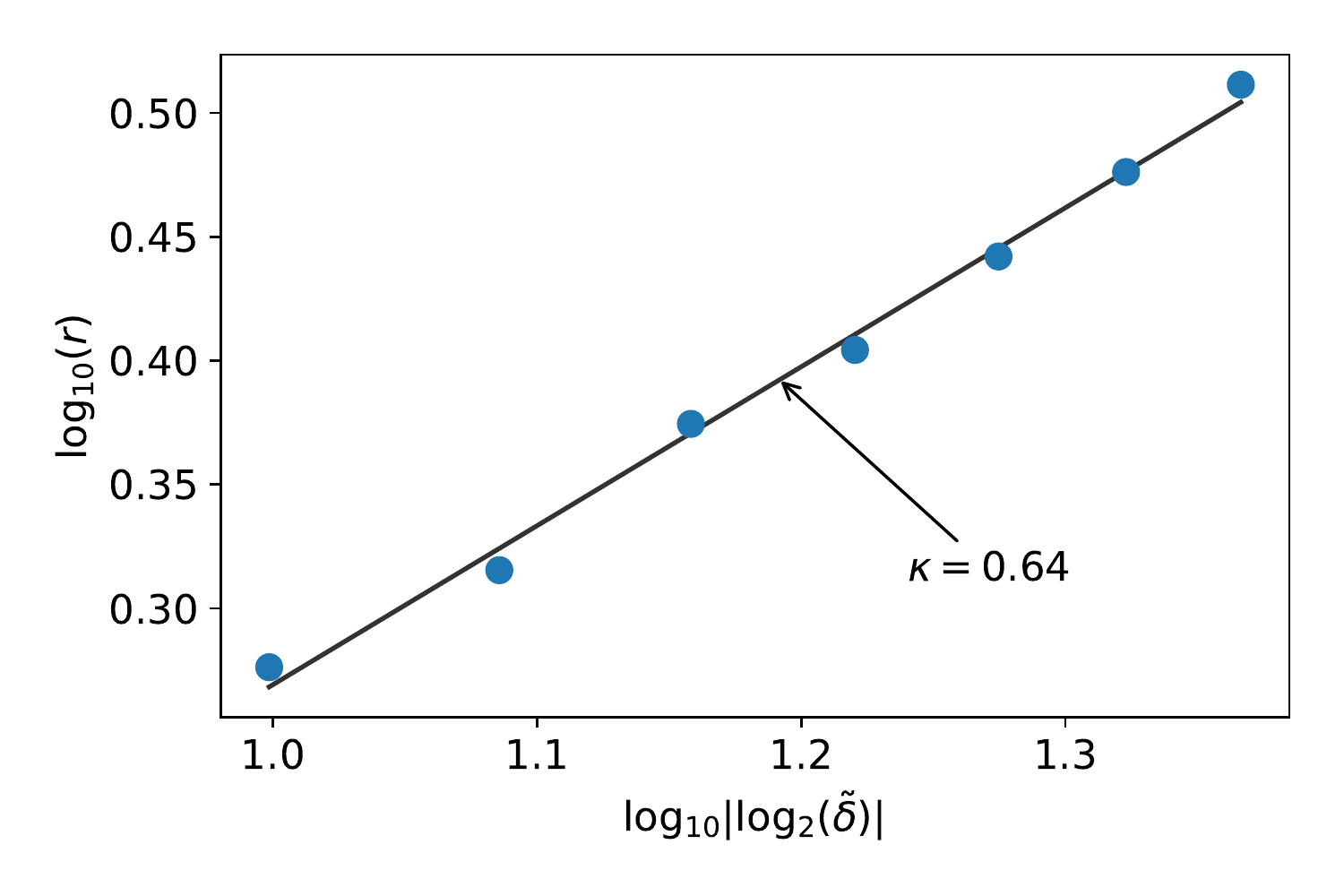}
  \caption{}
  \label{fig:rank_nodefect_n1d1}
\end{subfigure}
\caption{
	Multiscale multiscale problem~\eqref{eq:Aeps}--\eqref{Eq:ProblemEps} with
	the coefficient given by~\eqref{eq:coef_1d}.
	(a) QTT-FEM for the multiscale problem:
	effective rank $r$ vs. $|\cdot|_{H^1(D)}$-error for different $\varepsilon$.
	(b) QTT-FEM for the corresponding one-scale limit problem:
	effective rank $r$ vs. rounding parameter $\tilde\delta$ for $\ell = 10$ and $\ell_\varepsilon=17$.
}
\label{fig:nodefect_n1d1}

\end{figure}

\subsection{$(n+1)$-scale problem}

In this section, we consider the problem~\eqref{eq:Aeps}--\eqref{Eq:ProblemEps}
with $n+1$ scales, $D = Y = (0,1)$ and
\begin{equation}\label{eq:1d_nscale}
	A\Par{x,y_1,\ldots,y_n}
	=
	\Par[3]{ \frac{2}{3}}^n (1+x)
	\prod_{i=1}^n
	\Par[1]{ 1+\cos^2 \Par{2\pi y_i} }
\end{equation}
for all $x\in D$ and $y_1,\ldots,y_n \in Y$.
We discretize the problem using QTT-FEM with number of virtual grid levels $L=50$.
We fix the finest scale parameter to be $\varepsilon_n = 2^{-20}\approx 10^{-6}$
and then select the remaining scale parameters as follows
\[
	\varepsilon_k = 2^{2(n-k)} \varepsilon_n, \quad k=1,\dots, n-1
	\, .
\]
In Figure~\ref{fig:multiscale_rank} the effective rank values (obtained for the fixed truncation threshold $10^{-8}$)
against the number of scales are presented.
In this plot, we observe superlinear growth of the effective rank in the given range of the number of scales.
In absolute values, the effective rank increased approximately from $2.2$ for $n=1$ to $3.8$ for $n=9$, which only leads to a moderate increase of the total amount of work to solve the problem.

\begin{figure}
{
	\centering
	\includegraphics[width=0.7\linewidth]{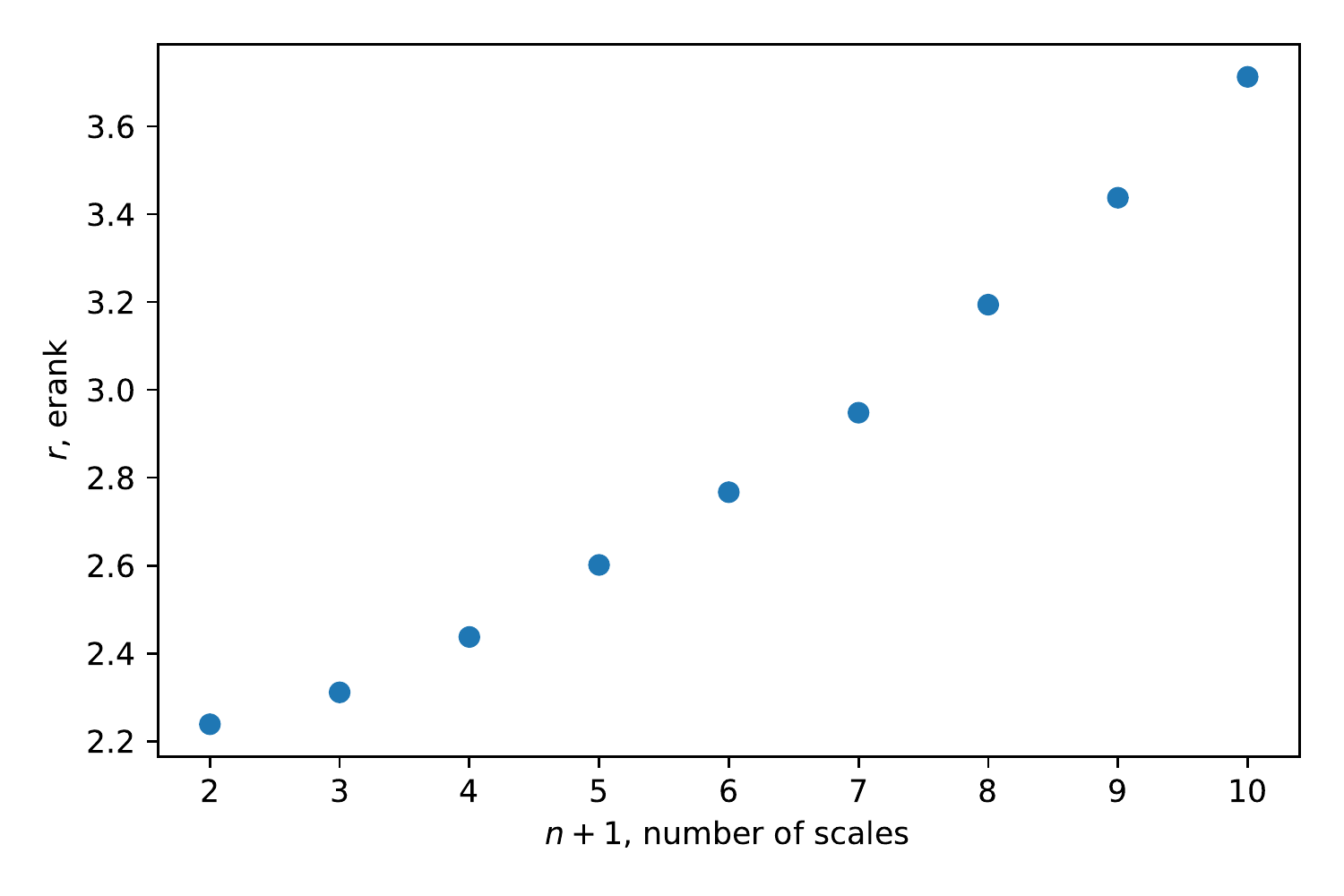}
	\caption{
	QTT-FEM for the multiscale problem~\eqref{eq:Aeps}--\eqref{Eq:ProblemEps} with
	the coefficient given by~\eqref{eq:1d_nscale}:
	effective rank $r$ vs. number $n+1$ of scales.
}
\label{fig:multiscale_rank}
}
\end{figure}

\subsection{Two scale problem in two physical dimensions}
\label{sc:2Sc2d}
In this section, we consider
the problem~\eqref{eq:Aeps}--\eqref{Eq:ProblemEps}
with two scales, $D = Y = (0,1)^2$ and $A = a I$, where $I$
is the identity matrix of order two and
\begin{equation}\label{eq:coef_2d}
	a\Par{x,y}
	=
	\Par[1]{ 1+\cos^2 \Par{2\pi y_1} }
        \Par[1]{ 1+\cos^2 \Par{2\pi y_2} }
\end{equation}
for all $x\in D$ and $y=(y_1,y_2)\in Y$.

Similarly to the one-dimensional case,
we introduce nested tensor-product grids with $(2^{\ell} - 1)^2$
interior points (see Sections~\ref{sc:fesp-intro} and~\ref{subsec:FE}).
On this grid we introduce FE basis functions that are tensor product of one-dimensional piecewise-linear hat functions.
Then for every $\ell$ the Galerkin solution is parametrized by the $2^{2\ell}$-component vector $\mathbf{u}^{\varepsilon,\ell}$.
The error and ranks are measured as described in Section~\ref{sc:2Scale}.
In Figure~\ref{fig:err_mult_2d} we plot the error w.r.t. the extrapolated solution~\eqref{eq:extrap} against virtual grid level $\ell$.
As anticipated we observe
first-order convergence with respect to the meshwidth $h_\ell = 2^{-\ell}$.

Figure~\ref{fig:rank_mult_2d} presents effective numerical rank of
$\texttt{round}(\mathrm{u}^{\varepsilon,\ell}, \tau_\ell)$ with $\tau_\ell$
being the smallest positive value satisfying~\eqref{eq:tau}.
We fit the effective numerical rank versus $\delta_\ell$
defined in~\eqref{eq:delta_l} using $r = \mathcal{O}(|\log \delta|^\kappa)$.
As for the case with one physical dimension,
the fitted values of $\kappa$ hardly depend
on the scale parameter $\varepsilon$.
\begin{figure}
\begin{subfigure}{.5\textwidth}
  \centering
  \includegraphics[width=\linewidth]{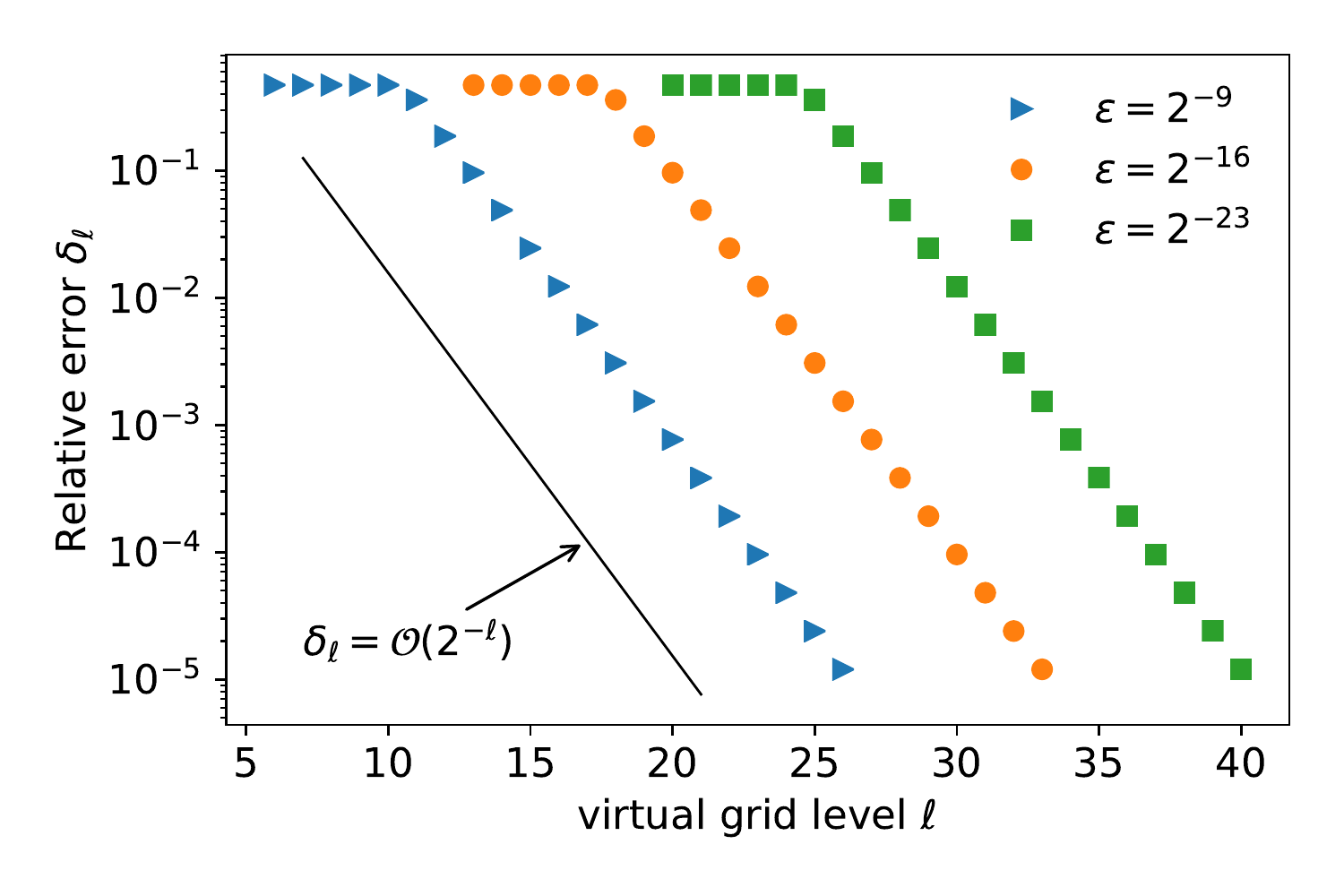}
  \caption{}
  \label{fig:err_mult_2d}
\end{subfigure}%
\begin{subfigure}{.5\textwidth}
  \centering
  \includegraphics[width=\linewidth]{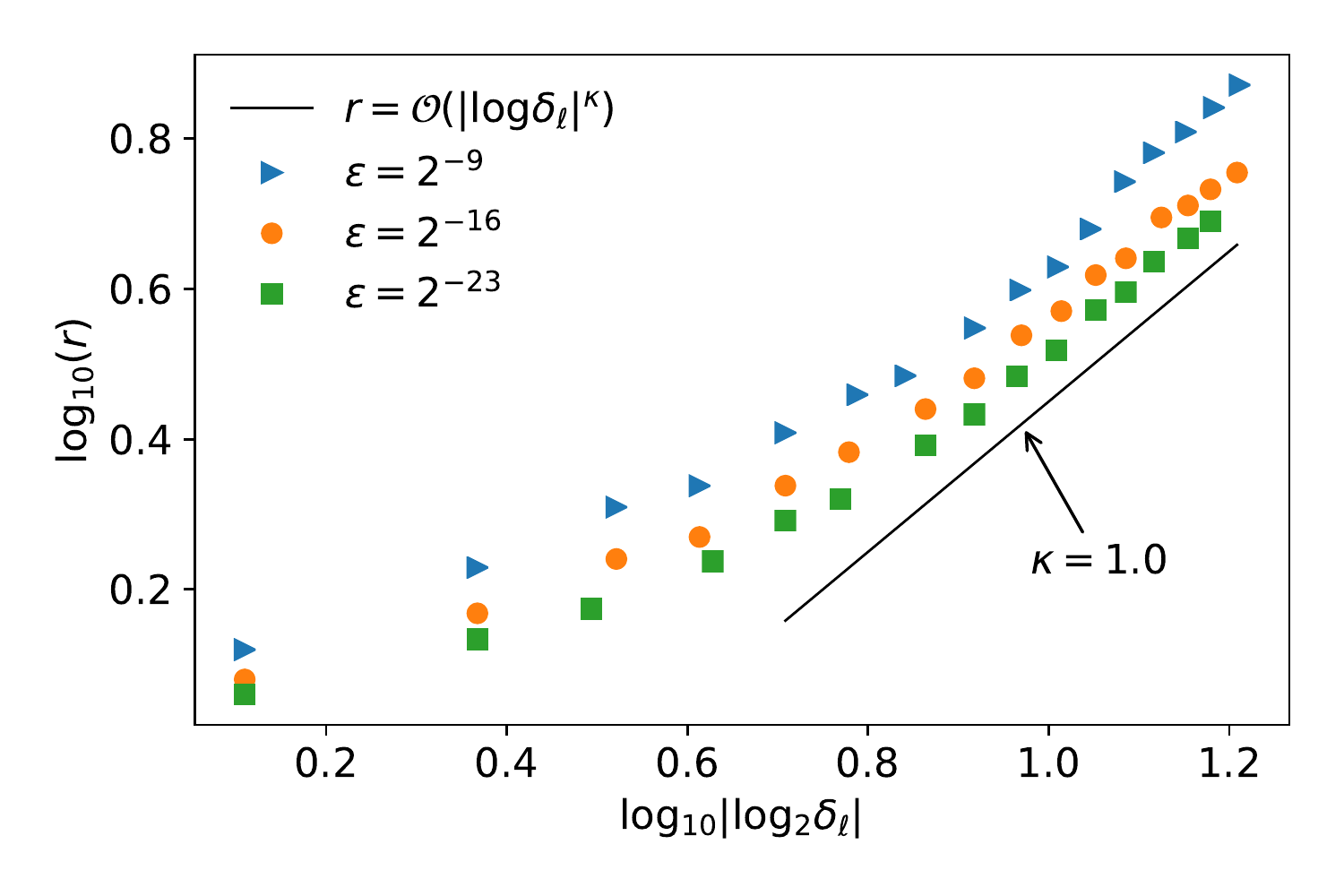}
  \caption{}
  \label{fig:rank_mult_2d}
\end{subfigure}
\caption{(a) Error $\delta_\ell$, defined in~\eqref{eq:ErrDelell}, w.r.t. the number of virtual
  grid levels for QTT-FEM for different $\varepsilon$.
  (b) Dependence of effective rank w.r.t. the
      seminorm $|\cdot|_{H^1(D)}$ of the error for different $\varepsilon$.}
\end{figure}
%%%%%%%%%%%%%%%%%%%%%%%%%%%%%%%%%%%%%%%%%%%%%%%%%%%%%%%%%%%%%%%%%%%%%%%%%%%%%%%%
\section{Conclusions and Generalizations}
\label{sc:ConclRmk}
%%%%%%%%%%%%%%%%%%%%%%%%%%%%%%%%%%%%%%%%%%%%%%%%%%%%%%%%%%%%%%%%%%%%%%%%%%%%%%%%
The present analysis and numerical experiments is focused on the
model linear elliptic multiscale problem~\eqref{Eq:ProblemEps}.
Here, the physical length scales are assumed to be \emph{asymptotically separated},
and the dependence of the diffusion coefficient $A^\varepsilon$ on the
fast variables $y_1,...,y_n$ is assumed to be periodic.
Similar structure and results hold for other types of PDEs
(e.g. \cite{BXVHH2014,BXVHH2015} and the references there).
The corresponding development of QTT-FE approaches for these
problem classes is a natural extension of the present analysis.

The assumptions allow to consider, instead of the original
$d$-dimensional multiscale problem,
a one-scale limit problem which is high-dimensional.
Analogous high-dimensional one-scale limit problems are obtained
for perforated materials, and for so-called reticulated structures,
as well as so-called lattice-materials; we refer to the survey
\cite{Cioranescu:2008:PeriodicUnfolding} and to the references there.
Additionally, we point out that
high-dimensional one-scale limit problems with the same,
tensorized structure as those considered here
arise also for certain \emph{non-periodic multiscale problems},
which fall into the class of the so-called \emph{homogenization structures},
as proposed by Nguetseng in \cite{NGuetsHomStrI}.
We also emphasize that analogous results are available for
\emph{nonlinear problems with multiple scales}; we refer to
\cite{VHH2008Monot} and the references there for further details.
The results of the present paper indicate that the resulting
(nonlinear) one-scale high-dimensional limit problems
can also be solved efficiently by QTT-FE discretization,
combined with a nonlinear solver.

We obtained the QTT rank bounds of the solution of the high-dimensional,
one-scale limit problem under strong (analyticity) assumptions on the
data which implied, as we showed,
the corresponding analyticity of the
solutions $u_i(x,y_1,...,y_i)$; this, in turn, allowed us to prove
bounds on the TT-rank of the solution that are logarithmic in accuracy.
This naturally leads to the question whether
analogous results can be expected in the case that we do not have analyticity.
Consider, for example,
the case where the unit cells $Y_i = (0,1)^d$ have `holes', i.e.
$Y_i = (0,1)^d \backslash O_i$, where $O_i \subset\subset Y_i$
is polyhedral, e.g. a cube centered at the point $(1/2,...,1/2)$
with edge length $1/2$.
The corresponding generalization of
unfolding homogenization is given in \cite{DCPDRZ2006},
In this case, the gradient $v_n$, given by~\eqref{eq:def-vi},
exhibits singularities on $\partial O_i$ with respect to the $i$th microscopic variable, for each $i=1,\dots,n$,
so that analyticity of $v_i$ with respect to
$y_i \in \overline{ Y_i \backslash O_i}$ can not be expected anymore.
Regularity results for the parametric unit-cell problems
    in countably normed spaces are available (for $n=1$ microscale
    and $d=2$ space dimensions) in \cite{MatMel03}.
    When combined with the QTT-FE approximations
    from \cite{KS:2017:QTTFE2d} (in space dimension $d=2$),
    also in this case, QTT-FE approximation rate and rank bounds
    completely analogous to the results in the present note can be obtained.
% \section*{Acknowledgments}

\clearpage

\section*{Appendix}

\subsection*{Proof of Lemma~\ref{Lm:Approx-ndh}}
\hfill
\begin{proof}
	Let $\mathsf{id}$ denote the identity transformation
	with respect to a scalar variable ranging in $\IntOO{0}{1}$.
	For all $L\in\N$ and $k\in\Set{1,\ldots,d}$,
	the errors bounded by the claim
	can be represented by telescoping sums as follows:
	\begin{multline}\nonumber
		v - \bar\varPi_i^L v
		=
		\sum_{k'=1}^d
		\bar\varPi_{i-1}^L
		\otimes
		\CuBr[3]{
			\Par[3]{
				\,
				\bigotimes_{k=1}^{k'-1}
				\pi^L
			}
			\otimes
			\Par[1]{ \mathsf{id} - \pi^L }
			\otimes
			\mathsf{id}^{\otimes (d-k')}
		}
		\,
		v
		\\
		+
		\sum_{j'=1}^i
		\sum_{k'=1}^d
		\bar\varPi_{j'-1}^L
		\otimes
		\CuBr[3]{
			\Par[3]{
				\,
				\bigotimes_{k=1}^{k'-1}
				\bar{\pi}^L
			}
			\otimes
			\Par[1]{ \mathsf{id} - \bar{\pi}^L }
			\otimes
			\mathsf{id}^{\otimes (d-k')}
		}
		\otimes
		\mathsf{id}^{\otimes (i-j')d}
		\otimes
		\mathsf{id}^{\otimes d}
		\,
		v
		\, ,
	\end{multline}
	\begin{multline}\nonumber
		w
		-
		\varPi_i^L w
		=
		\sum_{k'=1}^d
		\bar\varPi_{i-1}^L
		\otimes
		\CuBr[3]{
			\Par[3]{
				\,
				\bigotimes_{k=1}^{k'-1}
				\pi^L
			}
			\otimes
			\Par[1]{ \mathsf{id} - \pi^L }
			\otimes
			\mathsf{id}^{\otimes (d-k')}
		}
		\,
		w
		\\
		+
		\sum_{j'=1}^i
		\sum_{k'=1}^d
		\bar\varPi_{j'-1}^L
		\otimes
		\CuBr[3]{
			\Par[3]{
				\,
				\bigotimes_{k=1}^{k'-1}
				\bar{\pi}^L
			}
			\otimes
			\Par[1]{ \mathsf{id} - \bar{\pi}^L }
			\otimes
			\mathsf{id}^{\otimes (d-k')}
		}
		\otimes
		\mathsf{id}^{\otimes (i-j')d}
		\otimes
		\mathsf{id}^{\otimes d}
		\,
		w
		\, ,
	\end{multline}
	\begin{multline}\nonumber
		\partial_{i k} (w - \varPi_i^L w)
		=
		\sum_{k'=1}^d
		\bar\varPi_{i-1}^L
		\otimes
		\CuBr[3]{
			\Par[3]{
				\,
				\bigotimes_{k=1}^{k'-1}
				\pi^L
			}
			\otimes
			\partial_{ik}
			\Par[1]{ \mathsf{id} - \pi^L }
			\otimes
			\mathsf{id}^{\otimes (d-k')}
		}
		\,
		w
		\\
		+
		\sum_{j'=1}^i
		\sum_{k'=1}^d
		\bar\varPi_{j'-1}^L
		\otimes
		\CuBr[3]{
			\Par[3]{
				\,
				\bigotimes_{k=1}^{k'-1}
				\bar{\pi}^L
			}
			\otimes
			\Par[1]{ \mathsf{id} - \bar{\pi}^L }
			\otimes
			\mathsf{id}^{\otimes (d-k')}
		}
		\otimes
		\mathsf{id}^{\otimes (i-j')d}
		\otimes
		\mathsf{id}^{\otimes d}
		\partial_{ik} w
		\, .
	\end{multline}
	Applying Proposition~\ref{Pr:Appr01}
	to these representations, we obtain the claimed bounds.
\end{proof}

\subsection*{Proof of Lemma~\ref{Lm:Approx-ndp}}
\hfill
\begin{proof}
	The exponentials and shifted Chebyshev polynomials
	defined by~\eqref{Eq:DefExp} and~\eqref{Eq:DefCheb}
	form orthogonal bases in the spaces
	$L^2\IntOO{0}{1}$ and $L^2_{\omega}\IntOO{0}{1}$ respectively, where
	$\omega$ is the Chebyshev weight function given by~\eqref{Eq:ChebWeight}.
	It follows from the assumption that
	$w\in L^2_{\omega^{\otimes d} \otimes \mathsf{id}}(D \times \bm{Y}_i)$,
	so that
	$w$ can be represented by the following absolutely convergent series:
	\begin{equation}\label{Eq:L2expansion}
		w
		=
		\sum_{\alpha\in\Nz^d}
		\sum_{\beta_1\in\Z^d}
		\cdots
		\sum_{\beta_i\in\Z^d}
		c_{\alpha,\beta_1,\ldots,\beta_i}
		\,
			\Par[2]{
				\bigotimes_{k=1}^d
				\widetilde{T}_{\alpha_k}
			}
			\otimes
			\,
			\Par[2]{
				\bigotimes_{j=1}^i
				\bigotimes_{k=1}^d
				\widehat{T}_{\beta_{jk}}
			}
		\quad\text{in}\quad
		L^2_{\omega^{\otimes d} \otimes \mathsf{id}}(D \times \bm{Y}_i)
		\, ,
	\end{equation}
	where,
	due to~\eqref{Eq:OrthCheb} and~\eqref{Eq:OrthExp},
	the coefficients satisfy
	\begin{equation}\label{Eq:L2coefficient}
		c_{\alpha,\beta_1,\ldots,\beta_i}
		=
		\frac{\Abs{\kappa_\alpha}}{\pi^d}
		\,
		\IProd[2]{
			\Par[2]{
				\bigotimes_{k=1}^d
				\widetilde{T}_{\alpha_k}
			}
			\otimes
			\,
			\Par[2]{
				\bigotimes_{j=1}^i
				\bigotimes_{k=1}^d
				\widehat{T}_{\beta_{jk}}
			}
		}{\, w}_{
			L^2_{\omega^{\otimes d} \otimes \mathsf{id} }(D \times \bm{Y}_i)
		}
	\end{equation}
	for all $\alpha\in\Nz^d$ and $\beta_1,\ldots,\beta_i\in\Z^d$
	with
	$\kappa_0=1$, $\kappa_{\pm\alpha} = 2 \, (-1)^\alpha$ for each $\alpha \in\N$
	and $\kappa_\alpha = \kappa_{\alpha_1} \cdots \kappa_{\alpha_d}$
	for every $\alpha\in\N^d$.

	The entire function $\mathscr{z} \colon \C \to \C$
	given by $\mathscr{z}(\zeta)= (1-\cos 2\pi\zeta)/2$ for all $\zeta \in\C$
	bijectively maps
	each of the intervals
	$\IntOO{0}{1/2}$ and $\IntOO{1/2}{1}$
	onto $\IntOO{0}{1}$.
	Then, introducing
	$
		\mathscr{Z}
		=
		\mathscr{z}^{\otimes d} \otimes \mathsf{id}
		\colon
		\C^{(i+1)d}
		\to
		\C^{(i+1)d}
	$,
	we can substitute $\mathscr{Z}$ in~\eqref{Eq:L2coefficient} to
	express the coefficients of $w$ as follows:
	\begin{equation}\label{Eq:L2coefficient-sub}
		c_{\alpha,\beta_1,\ldots,\beta_i}
		=
		2^{-d}
		\!\!\!\!
		\sum_{\sigma\in \Set{\pm 1}^d}
		\!\!
		\widehat{c}_{\sigma \odot \alpha,\beta_1,\ldots,\beta_i}
	\end{equation}
	for all $\alpha\in\Nz^d$ and $\beta_1,\ldots,\beta_i\in\Z^d$,
	where $\sigma \odot \alpha = (\sigma_1 \alpha_1,\ldots,\sigma_d \alpha_d)$
	for any $\sigma\in \Set{\pm 1}^d$ and $\alpha\in\Nz^d$
	and
	\begin{equation}\label{Eq:L2coefficientZ}
		\widehat{c}_{\beta_0,\beta_1,\ldots,\beta_i}
		=
		\kappa_\alpha
		\,
		\IProd[2]{
			\Par[2]{
				\bigotimes_{k=1}^d
				\widehat{T}_{\beta_k}
			}
			\otimes
			\,
			\Par[2]{
				\bigotimes_{j=1}^i
				\bigotimes_{k=1}^d
				\widehat{T}_{\beta_{jk}}
			}
		}{\, w \circ \mathscr{Z}}_{
			L^2(D \times \bm{Y}_i)
		}
	\end{equation}
	for all $\beta_0,\beta_1,\ldots,\beta_i\in\Z^d$.

	For every
	$\delta>0$,
	the function $\mathscr{z}$ bijectively maps
	$
		\mathscr{S}_\delta
		=
		\Set[1]{
			\xi - \IU \eta
			\colon
			\xi\in\IntOO{0}{1}
			\, ,
			\eta\in\IntOO{0}{\delta}
		}
		\subset \C
	$
	onto
	$
		\mathcal{E}_\delta
		=
		\Set[1]{
			(1-a_\eta \cos 2\pi\xi)/2 - \IU \, (b_\eta \sin 2\pi\xi)/2
			\colon
			\xi\in\IntOO{0}{1}
			\, ,
			\eta\in\IntOO{0}{\delta}
		}
	$,
	where
	$a_\eta = \cosh 2\pi\eta$ % $(e^{2\pi\eta} + e^{-2\pi\eta})/2$
	and
	$b_\eta = \sinh 2\pi\eta$ for every $\eta>0$. % $(e^{2\pi\eta} - e^{-2\pi\eta})/2$.
	Note that $\mathcal{E}_\delta \cup \IntOC{(1-a_\delta)/2}{1}$
	is the image of the standard
	open Bernstein ellipse with parameter $\rho = e^{2\pi\delta}$
	(with foci $\pm 1$ and semi-axes $a_\delta$ and $b_\delta$)
	under the affine mapping $\C \ni z \mapsto (1-z)/2 \in\C$.
	Since the function
	$w$ is analytic on $\overline{D \times \bm{Y}_i}$
	by assumption,
	it admits extension by analytic continuation to an open neighborhood of
	$\overline{D \times \bm{Y}_i}$.
	Specifically, for some $\delta_{i0},\delta_{i1},\ldots,\delta_{ii}>0$,
	it has an extension to $\overline{\mathcal{G}_i}$,
	where
	\[
		\mathcal{G}_i
		=
		\CuBr[4]{
			\bigtimes_{k=1}^d \mathcal{E}_{\delta_{i0}}
		}
		\times
		\CuBr[4]{
			\bigtimes_{j=1}^i
			\bigtimes_{k=1}^d
			\mathscr{S}_{\delta_{ij}}
		}
		\, ,
	\]
	that is holomorphic on $\mathcal{G}_i$
	and continuous on $\overline{\mathcal{G}_i}$.
	We identify the original function $w$ with this (unique) extension
	and set $M_i = \sup_{z \in \mathcal{G}_i} \Abs{w(z)}$.
	For the domain
	\[
		\mathcal{D}_i
		=
		\bigtimes_{j=0}^i
		\bigtimes_{k=1}^d
		\mathscr{S}_{\delta_{ij}}
		\, ,
	\]
	we have
	$\mathcal{G}_i = \mathscr{Z}(\mathcal{D}_i)$
	and $\sup_{\zeta \in \mathcal{D}_i} \Abs{(w \circ \mathscr{Z})(\zeta)} = M_i$.
	Furthermore,
	$w \circ \mathscr{Z}$ is holomorphic on $\mathcal{D}_i$,
	continuous on $\overline{\mathcal{D}_i}=\mathscr{Z}(\overline{\mathcal{D}_i})$
	and one-periodic with respect to
	each of its $(i+1)d$ variables.
	Using these properties and applying the Cauchy--Goursat theorem
	for the domain $\mathcal{D}_i$, we obtain
	\begin{equation}\nonumber
		\widehat{c}_{\beta_0,\beta_1,\ldots,\beta_i}
		=
		\kappa_\alpha
		\idotsint\displaylimits_{
			\bigtimes_{j=0}^i
			\IntCC{-\IU \delta_{ij}}{1-\IU \delta_{ij}}^d
		}
			\Par[2]{
				\bigotimes_{j=0}^i
				\bigotimes_{k=1}^d
				\widehat{T}_{\beta_{jk}}^*
			}
			\,
			\Par{ w \circ \mathscr{Z} }
	\end{equation}
	and hence
	\begin{equation}\label{Eq:L2coefficientBound}
		\Abs{\widehat{c}_{\beta_0,\beta_1,\ldots,\beta_i}}
		\leq
		M_i \QQ \kappa_\alpha
		\QQ
		\exp
		\Par[2]{
			- \sum_{j=0}^i 2 \pi \delta_{ij} \Abs{\beta_j}
			\,
		}
	\end{equation}
	for all $\beta_0,\beta_1,\ldots,\beta_i\in\Nz^d$.
	Then~\eqref{Eq:L2coefficient-sub} gives
	\begin{equation}\label{Eq:L2coefficientBound2}
		\Abs{c_{\alpha,\beta_1,\ldots,\beta_i}}
		\leq
		M_i \QQ \kappa_\alpha
		\QQ
		\exp
		\Par[2]{
			- \sum_{j=0}^i 2 \pi \delta_{ij} \Abs{\beta_j}
			\,
		}
	\end{equation}
	for all $\alpha\in\Nz^d$ and $\beta_1,\ldots,\beta_i\in\Z^d$.

	Now we set
	$\delta_* = \min\Set{\delta_{i0},\delta_{i1},\ldots,\delta_{ii}}$
	and verify the claimed bounds for
	$c = (2\pi\delta_*)^{-1}$, $p=\Ceil{c \QQ \log \epsilon^{-1}}$
	and a suitable positive constant $C$.
	Let
	$\mathcal{I}_{0} = \Set{0,1,\ldots,p - 1}$,
	$\mathcal{J}_{0} = \Set{0,\pm 1,\ldots,\pm(p - 1)}$
	and
	$\mathcal{I}_{1} = \Nz \SetMinus \mathcal{I}_{0}$,
	$\mathcal{J}_{1} = \Z \SetMinus \mathcal{J}_{0}$.
	Using the product index sets
	$\bm{\mathcal{I}}_{\mu} = \mathcal{I}_{\mu_1} \times \cdots \times \mathcal{I}_{\mu_d}$
	and
	$\boldsymbol{\mathcal{J}}_{\! \mu} = \mathcal{J}_{\mu_1} \times \cdots \times \mathcal{J}_{\mu_d}$
	with $\mu\in \Set{0,1}^d$,
	we can recast the expansion~\eqref{Eq:L2expansion}
	in $L^2_{\omega^{\otimes d} \otimes \mathsf{id}}(D \times \bm{Y}_i)$
	as follows:
	\begin{equation}\label{Eq:L2expansion2}
		w
		=
		\sum_{m=0}^{(i+1)d}
		\sum_{\substack{
			\mu,\nu_1,\ldots,\nu_i\in\Set{0,1}^d
			\colon
			\\
			\Abs{\mu}+\sum_{j=1}^i \Abs{\nu_j} = m
		}}
		\;
		\sum_{\substack{
			\alpha\in\boldsymbol{\mathcal{I}}_{\mu}
			\\
			\beta_1\in \boldsymbol{\mathcal{J}}_{\! \nu_1}
			\\
			\cdots
			\\
			\beta_i\in \boldsymbol{\mathcal{J}}_{\! \nu_i}
		}}
		\!\!\!\!
		c_{\alpha,\beta_1,\ldots,\beta_i}
		\,
			\Par[2]{
				\bigotimes_{k=1}^d
				\widetilde{T}_{\alpha_k}
			}
			\otimes
			\,
			\Par[2]{
				\bigotimes_{j=1}^i
				\bigotimes_{k=1}^d
				\widehat{T}_{\beta_{jk}}
			}
			\, .
	\end{equation}
	In the right-hand side of~\eqref{Eq:L2expansion2}, the term of the outer sum
	corresponding to $m=0$ is $\varPi_{ i,p} \, w$,
	and the remainder can be bounded using~\eqref{Eq:L2coefficientBound2}:
	\begin{multline}\label{Eq:L2expansionErrorNorm}
			\Norm{
				w - \varPi_{ i,p} \, w
			}_{L^\infty\Par{D \times \bm{Y}_i}}
			\leq
			\sum_{m=1}^{(i+1)d}
			\sum_{\substack{
				\mu,\nu_1,\ldots,\nu_i\in\Set{0,1}^d
				\colon
				\\
				\Abs{\mu}+\sum_{j=1}^i \Abs{\nu_j} = m
			}}
			\;
			\sum_{\substack{
				\alpha\in\boldsymbol{\mathcal{I}}_{\mu}
				\\
				\beta_1\in \boldsymbol{\mathcal{J}}_{\! \nu_1}
				\\
				\cdots
				\\
				\beta_i\in \boldsymbol{\mathcal{J}}_{\! \nu_i}
			}}
			\Abs{c_{\alpha,\beta_1,\ldots,\beta_i}}
			\\
			\leq
			\frac{ M_i \, 2^{d-1} \, 2^{id} }{ (1-\lambda)^{(i+1)d} }
			\!\!
			\sum_{m=1}^{(i+1)d}
			\!\!
			\epsilon^m
			\binom{(i+1)d \,}{m}
			\leq
			C_0
			\QQ
			\epsilon
			\, ,
	\end{multline}
	where $\lambda=e^{-2\pi\delta_*}\in\IntOO{0}{1}$ and
	$
		C_0
		=
		M_i \QQ (i+1) \QQ d
			\,
			2^{(i+1)d-1}
			\Par{ 1+\epsilon_0 }^{(i+1)d-1}
			/
		(1-\lambda)^{(i+1)d}
		> 0
	$.
	This gives the first of the bounds~\eqref{Eq:Approx-nd-bound}
	with any constant $C \geq C_0$, selected independently of $\epsilon$.

	For derivatives of the shifted Chebyshev polynomials and exponentials,
	we have $\Norm{\widetilde{T}_\alpha '}_{L^\infty\IntOO{0}{1}} = 2\alpha^2$
	for all $\alpha\in\Nz$
	and
	$\Norm{\widehat{T}_\beta '}_{L^\infty\IntOO{0}{1}} = 2\pi\Abs{\beta}$
	for all $\beta\in\Z$.
	Note that there exist positive constants $\gamma_1$ and $\gamma_2$
	such that
	$\sum_{\beta=r}^{\infty} \beta \lambda^\beta \leq \gamma_1 \QQ (1-\lambda)^{-1} r \sum_{\beta=r}^{\infty} \lambda^\beta$
	and
	$\sum_{\beta=r}^{\infty} \beta^2 \lambda^s \leq \gamma_2 \QQ (1-\lambda)^{-2} r^2 \sum_{\beta=r}^{\infty} \lambda^\beta$
	for any $r\in\Nz$.
	Using this, we obtain, as in~\eqref{Eq:L2expansionErrorNorm},
	the following inequalities:
	\begin{equation}\nonumber%\label{Eq:L2expansionErrorNormD}
			\Norm{
				\partial_{k}
				(w - \varPi_{ i,p} \, w)
			}_{L^\infty\Par{D \times \bm{Y}_i}}
			\leq
			\frac{2\gamma_2 \QQ C_0 \QQ \epsilon \QQ p^2 }{(1-\lambda)^2}
			\, ,
			\;
			\Norm{
				\partial_{jk}
				(w - \varPi_{ i,p} \, w)
			}_{L^\infty\Par{D \times \bm{Y}_i}}
			\leq
			\frac{2\pi \gamma_1 \QQ C_0 \QQ \epsilon \QQ p}{1-\lambda}
			\, .
	\end{equation}
	for all $k\in\Set{1,\ldots,d}$ and $j\in\Set{1,\ldots,i}$.
	This proves the last two of the bounds~\eqref{Eq:Approx-nd-bound}
	with a suitable positive constant $C$,
	which can be chosen independently of $\epsilon\in\IntOO{0}{\epsilon_0}$.
\end{proof}

\subsection{Proof of Lemma~\ref{Lm:Approx-ndhp}}
\hfill
\begin{proof}
	Let $L\in\N$.
	Using the triangle inequality,
	we bound the errors as follows:
	\begin{equation}\label{Lm:Approx-ndhp::Eq:triangle}
		\begin{aligned}
			\Norm{
				\partial_{i k}
				\QQ
				\Par{
					w
					-
					\varPi_i^L \varPi_{ i,p}
					\,
					w
				}
			}_\infty
			&\leq
			\Norm{
				\partial_{i k}
				\QQ
				\Par{
					\mathsf{id} - \varPi_{ i,p}
				}
				\,
				w
			}_\infty
			+
			\Norm{
				\partial_{i k}
				\QQ
				\Par{
					\mathsf{id} - \varPi_i^L
				}
				\QQ
				\varPi_{ i,p}
				\,
				w
			}_\infty
			\, ,
			\\
			\Norm{
				\partial_{i k} w
				-
				\bar\varPi_i^L
				\partial_{i k}
				\QQ
				\varPi_{ i,p}
				\QQ
				w
			}_{\infty}
			&\leq
			\Norm{
				\partial_{i k}
				( \mathsf{id} - \varPi_{ i,p} )
				\,
				w
			}_{\infty}
			+
			\Norm{
				( \mathsf{id} - \bar\varPi_i^L )
				\QQ
				\partial_{i k}
				\QQ
				\varPi_{ i,p}
				\,
				w
			}_{\infty}
		\end{aligned}
	\end{equation}
	for every $k\in\Set{1,\ldots,d}$.
	By Lemma~\ref{Lm:Approx-ndp},
	there exist positive constants
	$C_0$ and $c$ such that,
	for $p=\Ceil{c L}$,
	we have
	\begin{equation}\label{Lm:Approx-ndhp::Eq:ndpErr}
			\Norm{
				\partial_{k}
				\QQ
				( \mathsf{id} - \varPi_{ i,p} )
				\,
				w
			}_{\infty}
			\leq
			C_0 \, p^2 \QQ 2^{-L}
			\, ,
			\quad
			\Norm{
				\partial_{jk}
				\QQ
				( \mathsf{id} - \varPi_{ i,p} )
				\,
				w
			}_{\infty}
			\leq
			C_0 \, p \, 2^{-L}
			\, .
	\end{equation}

	Certain derivatives of $\varPi_{ i,p} \, w$ can be bounded in terms of
	first-order derivatives of $\varPi_{ i,p} \, w$ using
% 	the Markov's inequality for algebraic polynomials
% 	and
	the Bernstein's inequality for trigonometric polynomials.
	Applying it together with the bounds~\eqref{Lm:Approx-ndhp::Eq:ndpErr}
	and Lemma~\ref{Lm:Approx-ndh},
	we obtain
	\begin{multline}\label{Lm:Approx-ndhp::Eq:Err1}
			\Norm{
				\partial_{ik}
				\QQ
				( \mathsf{id} - \varPi_i^L )
				\QQ
				\varPi_{ i,p}
				\,
				w
			}_{
				\infty%(D \times \bm{Y}_i)
			}
			\leq
			2^{-L}
			\sum_{k'=1}^d
			\Norm{\partial_{k'}\partial_{i k} \varPi_{ i,p} \, w}_{
				\infty%(D \times \bm{Y}_i)
			}
			\\
			+
			2^{-L}
			\sum_{j'=1}^{i-1}
			\sum_{k'=1}^d
			\Norm{\partial_{j' k'}\partial_{i k} \varPi_{ i,p} \, w}_{
				\infty%(D \times \bm{Y}_i)
			}
			+
			2^{-L}
			\sum_{k'=1}^d
			\Norm{\partial^2_{i k'} \partial_{i k} \varPi_{ i,p} \, w}_{
				\infty%(D \times \bm{Y}_i)
			}
			\\
			\leq
			2^{-L}
			\sum_{k'=1}^d
			2 \pi p \, \Norm{\partial_{k'} \varPi_{ i,p} \, w}_\infty
			+
			2^{-L}
			\sum_{j'=1}^{i-1}
			\sum_{k'=1}^d
			2 \pi p \, \Norm{\partial_{j' k'} \varPi_{ i,p} \, w}_\infty
			\\
			+
			2^{-L}
			\sum_{k'=1}^d
			(2 \pi p)^2 \, \Norm{\partial_{i k} \varPi_{ i,p} \, w}_{
				\infty%(D \times \bm{Y}_i)
			}
			\leq
			2^{-L}
			\sum_{k'=1}^d
			2 \pi p \, \CuBr[1]{
				\Norm{\partial_{k'} \QQ w}_\infty
				+
				C_0 \, p^2 \QQ 2^{-L}
			}
			\\
			+
			2^{-L}
			\sum_{j'=1}^i
			(2 \pi p)^2 \, \CuBr[1]{
				\Norm{\partial_{j k} \QQ w}_\infty
				+
				C_0 \, p^2 \QQ 2^{-L}
			}
			\leq
			C_1 \, p^2 \, 2^{-L}
	\end{multline}
	for every $k\in\Set{1,\ldots,d}$
	with a positive constant $C_1$ independent of $L$.
	The same approach leads to the bound
	\begin{multline}\label{Lm:Approx-ndhp::Eq:Err0}
			\Norm{
				( \mathsf{id} - \bar\varPi_i^L )
				\QQ
				\partial_{i k}
				\QQ
				\varPi_{ i,p}
				\,
				w
			}_{\infty}
			\leq
			2^{-L}
			\sum_{k'=1}^d
			\Norm{
					\partial_{k'}
					\partial_{i k}
				\QQ
				\varPi_{ i,p}
				\QQ
				w
			}_\infty
			+
			2^{-L}
			\sum_{j'=1}^{i}
			\sum_{k'=1}^d
			\Norm{
					\partial_{j' k'}
					\partial_{i k}
				\QQ
				\varPi_{ i,p}
				\QQ
				w
			}_\infty
			\\
			\leq
			2^{-L}
			\sum_{k'=1}^d
			2\pi p \,
			\Norm{
					\partial_{k'}
				\QQ
				\varPi_{ i,p}
				\QQ
				w
			}_\infty
			+
			2^{-L}
			\sum_{j'=1}^{i}
			\sum_{k'=1}^d
			2\pi p \,
			\Norm{
				\partial_{j' k'}
				\QQ
				\varPi_{ i,p}
				\QQ
				w
			}_\infty
			\\
			\leq
			2^{-L}
			\sum_{k'=1}^d
			2\pi p \,
			\CuBr[1]{
				\Norm{\partial_{k'} \QQ w}_\infty
				+
				C_0 \, p^2 \, 2^{-L}
			}
			+
			2^{-L}
			\sum_{j'=1}^{i}
			\sum_{k'=1}^d
			2\pi p \,
			\CuBr[1]{
				\Norm{\partial_{j' k'} \QQ w}_\infty
				+
				C_0 \, p \, 2^{-L}
			}
			\\
			\leq
			C_2 \, p^2 \, 2^{-L}
	\end{multline}
	for every $k\in\Set{1,\ldots,d}$
	with a positive constant $C_2$ independent of $L$.

	Combining inequalities~\eqref{Lm:Approx-ndhp::Eq:Err1}
	and~\eqref{Lm:Approx-ndhp::Eq:Err0}
	with~\eqref{Lm:Approx-ndhp::Eq:ndpErr}
	and~\eqref{Lm:Approx-ndhp::Eq:triangle},
	we obtain the claimed error bounds
	with $C = C_0 + \max\Set{C_1,C_2}$.
\end{proof}

\subsection{Auxiliary results for Lemma~\ref{lem:Savg}}
\hfill

Lemma~\ref{lem:Savg} is based on the following
auxiliary Lemmas~\ref{lem:WL-avg} and~\ref{lem:Qstar-avg}.
We formulate these lemmas in terms of intermediate, starred
finite element spaces with corresponding analysis operators and low-rank subspaces,
which reflect the iterative averaging of all the $n$ microscales,
as defined in Definition~\ref{def:unfolding-folding}.

First, similarly to as in~\eqref{eq:def-i-sp},
using the functions $\lambda_0,\ldots,\lambda_n$ of $\varepsilon$
from Assumption~\ref{ass:Analytic1},
we define the space
\begin{equation}\label{eq:def-i-sp-avg}
	\bar{V}_{\! \star \QQ i}^L
	=
	\bar{V}^{\lambda_i+L}
	\otimes
	\bigotimes_{j=i+1}^n
	\bar{V}^L
\end{equation}
for every $i\in\Set{0,1,\ldots,n}$,
so that
$\bar{V}_{\! \star \QQ 0}^L = \bar{V}_{\! n}^L$
and
$\bar{V}_{\! \star \QQ n}^L = \bar{V}_{\! 0}^{\lambda_n+L}$ 
Further, as in~\eqref{Eq:DefAnOp},
we define
an analysis operator
$
	\bar{\varPsi}_{\! \star \QQ i}^L
	\colon
	L^2(D \times \bm{Y}_i)
	\to
	\C^{2^{d(\lambda_i+L)+(n-i)dL}}
$
by setting
\begin{equation}\label{Eq:DefAnOpAvg}
	\bar{\varPsi}_{\! \star \QQ i}^L
	=
	\bar\varPsi^{\lambda_i+L}
	\otimes
	\bigotimes_{j=i+1}^n
	\bar\varPsi^L
\end{equation}
for every $i\in\Set{0,1,\ldots,n}$.
Then $\bar{\varPsi}_{\! \star \QQ 0}^L$ and $\bar{\varPsi}_{\! \star \QQ n}^L$
are identical to $\bar{\varPsi}_n^L$ and $\bar{\varPsi}_n^{\lambda_n+L}$ respectively.

Note that the starred finite-element spaces are
introduced in~\eqref{eq:def-i-sp-avg} so as to ensure
that
averaging an element of each of these spaces (except the last)
produces an element from the next space.
Indeed, the following embedding property follows from Definition~\ref{def:unfolding-folding}
and equality~\eqref{eq:def-i-sp-avg}.
\begin{lemma}\label{lem:WL-avg}
	For all $L\in\N$ and $i\in\Set{1,\ldots,n}$,
	we have
	$\mathcal{U}^\varepsilon_i \, \bar{V}_{\! \star \QQ i-1}^L \subset \bar{V}_{\! \star \QQ i}^L$.
\end{lemma}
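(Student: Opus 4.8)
The plan is to exploit the tensor-product structure of $\bar{V}_{\! \star \QQ i-1}^L = \bar{V}^{\lambda_{i-1}+L} \otimes \bigotimes_{j=i}^n \bar{V}^L$ together with the fact that, by Definition~\ref{def:unfolding-folding}, $\mathcal{U}^\varepsilon_i$ acts nontrivially only on the physical variable $x$ and on the $i$th microscale variable $y_i$, leaving $y_{i+1},\ldots,y_n$ untouched. By linearity of $\mathcal{U}^\varepsilon_i$ it suffices to verify the inclusion on rank-one elements $\varPhi = g \otimes h \otimes \psi$, where $g\in\bar{V}^{\lambda_{i-1}+L}$ is piecewise constant in $x$, $h\in\bar{V}^L$ is piecewise constant in $y_i$, and $\psi\in\bigotimes_{j=i+1}^n \bar{V}^L$ collects the remaining variables. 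The factor $\psi$ then passes through $\mathcal{U}^\varepsilon_i$ unchanged and remains in $\bigotimes_{j=i+1}^n \bar{V}^L$, so the whole problem reduces to the action on the $(x,y_i)$-block $g\otimes h$.

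First I would write out this action explicitly, obtaining the factorization
\[
	\Par{\mathcal{U}^\varepsilon_i \QQ (g\otimes h)}(x)
	=
	h\!\Par[2]{\CuBr[2]{\frac{x}{\varepsilon_i}}}
	\cdot
	\Abs{Y_i}^{-1}
	\int_{Y_i}
	g\!\Par[2]{\varepsilon_i \SqBr[2]{\frac{x}{\varepsilon_i}} + \varepsilon_i \QQ z}
	\d z
	\, ,
\]
which holds because $g$ enters only through the first argument and $h$ only through the reinserted fractional part. I would then treat the two factors separately. For the integral factor, the map $x\mapsto \SqBr{x/\varepsilon_i}$ is constant on each cube of the grid of meshwidth $\varepsilon_i = 2^{-\lambda_i}$, so this factor is piecewise constant at level $\lambda_i$ and hence, by nestedness of the dyadic grids, lies in $\bar{V}^{\lambda_i+L}$. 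For the remaining factor I would use that $x\mapsto \CuBr{x/\varepsilon_i}$ maps each level-$\lambda_i$ cube affinely and bijectively onto $Y_i=\IntOO{0}{1}^d$; composing the level-$L$ piecewise-constant function $h$ with this map subdivides each level-$\lambda_i$ cube into $2^{dL}$ congruent subcubes, so that the composite is piecewise constant at level $\lambda_i+L$, i.e. an element of $\bar{V}^{\lambda_i+L}$.

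Finally, since the spaces of piecewise-constant functions on a fixed dyadic grid are closed under pointwise multiplication, the product of the two factors again lies in $\bar{V}^{\lambda_i+L}$; tensoring with the untouched factor $\psi$ yields $\mathcal{U}^\varepsilon_i \QQ \varPhi \in \bar{V}^{\lambda_i+L} \otimes \bigotimes_{j=i+1}^n \bar{V}^L = \bar{V}_{\! \star \QQ i}^L$, as claimed. Everything tensorizes over the $d$ coordinate directions, so it is enough to carry out the grid bookkeeping one dimension at a time. I expect the only delicate point to be the accounting for the nonlinear unfolding change of variables $x\mapsto\Par{\SqBr{x/\varepsilon_i},\CuBr{x/\varepsilon_i}}$: one must check that the level-$\lambda_i$ cell decomposition induced by the integer part and the level-$(\lambda_i+L)$ refinement induced by the fractional part are nested consistently, which is exactly what the dyadic choice $\varepsilon_i=2^{-\lambda_i}$ from Assumption~\ref{ass:Analytic1} guarantees.
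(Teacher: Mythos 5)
Your proof is correct and takes essentially the same approach as the paper: the paper states Lemma~\ref{lem:WL-avg} without a separate proof, as an immediate consequence of Definition~\ref{def:unfolding-folding} and~\eqref{eq:def-i-sp-avg}, and the rank-one reduction and explicit factorization of $\mathcal{U}^\varepsilon_i(g\otimes h\otimes\psi)$ that you write out is precisely the computation the paper makes explicit in~\eqref{eqn:avg-tensor-product} within the proof of the companion Lemma~\ref{lem:Qstar-avg}. Your closing remark is also on target: the required nesting of the integer-part cell decomposition with the dyadic grids follows from $\varepsilon_i=2^{-\lambda_i}$ in Assumption~\ref{ass:Analytic1} alone, so the embedding indeed holds for all $L\in\N$ without invoking Assumption~\ref{ass:tech}.
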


In order to analyze how the structure of functions from $\bar{V}_n^L$
with coefficients from $\mathscr{Q}_n^L$ is transformed under averaging,
we define, for every $i\in\Set{0,1,\ldots,n}$,
	\begin{equation}\label{eq:ui-approx-SpaceAvg}
		\mathscr{Q}_{\star i}^L
		=
		\mathscr{S}_i^L
		\,
		\otimes
		\bigotimes_{j=i+1}^n \mathcal{P}_{\! \! \# (n+1-j) \QQ p_L}^{L,d}
		\subset
		\C^{2^{d(\lambda_i +L)+ (n-i)dL}}
		\: .
	\end{equation}
In particular, the so defined subspaces
$\mathscr{Q}_{\star 0}^L$ and $\mathscr{Q}_{\star n}^L$ coincide with
$\mathscr{Q}_n^L$ and $\mathscr{S}_n^L$,
given by~\eqref{eq:ui-approx-Space} and~\eqref{eq:ui-approx-SpaceAvg}
respectively. These intermediate subspaces satisfy the following relation.
%}
%
\begin{lemma}\label{lem:Qstar-avg}
	For all $L\in\N$, $i\in\Set{1,\ldots,n}$
	and $v \in \bar{V}_{\! \star \QQ i-1}^L$ such that
	$\bar{\varPsi}_{\! \star \QQ i-1}^L \, v \in \mathscr{Q}_{\star i-1}^L$,
	we have
	$\bar{\varPsi}_{\! \star i}^L \, \mathcal{U}^\varepsilon_i \QQ v \in \mathscr{Q}_{\star i}^L$.
\end{lemma}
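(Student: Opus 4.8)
The plan is to read the averaging operator $\mathcal{U}^\varepsilon_i$ through the analysis operators~\eqref{Eq:DefAnOpAvg} and to show that, on coefficient tensors, it acts as a tensor product of block operators that respects the factorized structure of $\mathscr{Q}_{\star i-1}^L$. Lemma~\ref{lem:WL-avg} already guarantees $\mathcal{U}^\varepsilon_i \QQ v \in \bar{V}_{\! \star \QQ i}^L$, so the coefficient tensor $\bar{\varPsi}_{\! \star \QQ i}^L \, \mathcal{U}^\varepsilon_i \QQ v$ is well defined and it remains only to trace its structure. First I would note that, by Definition~\ref{def:unfolding-folding}, $\mathcal{U}^\varepsilon_i$ passes the variables $y_{i+1},\ldots,y_n$ through unchanged and couples only the physical variable $x$ with the microscale variable $y_i$, through the cell average $\Abs{Y_i}^{-1}\int_{Y_i}\cdots\d z$ together with the identification $y_i = \CuBr[1]{x/\varepsilon_i}$. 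On piecewise-constant finite-element functions this lifts, via~\eqref{Eq:DefAnOpAvg} and the quantization transposition~\eqref{Eq:IndTranspMatrix}, to a coefficient-level map of the form $\mathcal{A}_i \otimes \bigotimes_{j=i+1}^n \mathsf{id}$, where $\mathcal{A}_i$ acts on the block carrying the physical variable (at level $\lambda_{i-1}+L$) together with the microscale-$i$ block (at level $L$).

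The core of the argument is to identify $\mathcal{A}_i$ explicitly. Under Assumption~\ref{ass:tech} we have $L \geq \lambda_i - \lambda_{i-1}$, so after the transposition the physical variable at level $\lambda_{i-1}+L$ splits into a coarse part down to scale $\varepsilon_i = 2^{-\lambda_i}$ (levels $1,\ldots,\lambda_i$) and a within-$\varepsilon_i$-cell part occupying the finer levels. The cell average $\int_{Y_i}\cdots\d z$ integrates out precisely this within-cell part, which is exactly the operation encoded by the $\tfrac12\,\xi^\MT$ factors in $\tilde{\bm{M}}_{i-1}^L$, while the identity factors of $\bm{M}_{i-1}^L$ retain the coarse structure; hence on the physical block the averaging coincides with $\bm{M}_{i-1}^L$ and produces a block at level $\lambda_i$. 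Simultaneously, the identification $y_i = \CuBr[1]{x/\varepsilon_i}$ relabels the microscale-$i$ block, discretized at level $L$, as the $L$ finest levels of the new physical variable (now at level $\lambda_i+L$). Since these two operations act on disjoint tensor factors they tensorize, so $\mathcal{A}_i$ sends a product $s \otimes t$ (physical block $\otimes$ microscale-$i$ block) to $\Par[1]{\bm{M}_{i-1}^L \, s} \otimes t$, with $t$ placed in the finest levels --- which is exactly the passage from the definition of $\mathscr{S}_{i-1}^L$ to that of $\mathscr{S}_i^L$ in~\eqref{eq:ui-approx-SpaceAvgX}.

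With $\mathcal{A}_i$ so identified, the conclusion is immediate. Decomposing
\[
	\bar{\varPsi}_{\! \star \QQ i-1}^L \, v
	\in
	\mathscr{Q}_{\star i-1}^L
	=
	\mathscr{S}_{i-1}^L
	\otimes
	\mathcal{P}_{\! \! \# (n+1-i) \QQ p_L}^{L,d}
	\otimes
	\bigotimes_{j=i+1}^n
	\mathcal{P}_{\! \! \# (n+1-j) \QQ p_L}^{L,d}
	\, ,
\]
the operator $\mathcal{A}_i$ maps the first two factors into $\Par[1]{\bm{M}_{i-1}^L \, \mathscr{S}_{i-1}^L} \otimes \mathcal{P}_{\! \! \# (n+1-i) \QQ p_L}^{L,d} = \mathscr{S}_i^L$, while the identity factors leave $\bigotimes_{j=i+1}^n \mathcal{P}_{\! \! \# (n+1-j) \QQ p_L}^{L,d}$ unchanged; by~\eqref{eq:ui-approx-SpaceAvg} the result lies in $\mathscr{S}_i^L \otimes \bigotimes_{j=i+1}^n \mathcal{P}_{\! \! \# (n+1-j) \QQ p_L}^{L,d} = \mathscr{Q}_{\star i}^L$, as claimed. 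I expect the main obstacle to be the rigorous verification of the coefficient-level representation $\mathcal{A}_i$: in particular, checking that the transposition~\eqref{Eq:IndTranspMatrix} aligns the $L$ finest levels of the reassembled physical variable with the $L$ levels of the microscale-$i$ block in the correct orientation, and that the cell-average normalization matches the $\tfrac12\,\xi^\MT$ factors of~\eqref{eq:def-M}. Everything else is bookkeeping on tensor-product factors.
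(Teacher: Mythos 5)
Your proposal is correct and follows essentially the same route as the paper's proof: both reduce the claim to the observation that, on coefficient tensors, $\mathcal{U}^\varepsilon_i$ acts as $\bm{M}_{i-1}^L$ on the physical block, relabels the microscale-$i$ block as the $L$ finest levels of the new physical variable, and is the identity on the remaining factors, so that membership in $\mathscr{Q}_{\star i}^L$ follows from~\eqref{eq:ui-approx-SpaceAvgX} and~\eqref{eq:ui-approx-SpaceAvg}. The verification you flag as the main remaining obstacle is precisely what the paper supplies: it reduces by linearity to pure tensors $\Vec{\kappa}\otimes\Vec{\mu}\otimes\Vec{\nu}$ and evaluates $\mathcal{U}^\varepsilon_i \, v$ at the grid points via Definition~\ref{def:unfolding-folding}, obtaining the factorization $\Par[1]{\tilde{\bm{M}}_i^L\Vec{\kappa}}\otimes\Vec{\mu}\otimes\Vec{\nu}$ directly.
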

\begin{proof}
	Let us consider a function $v \in W_i^L$ such that
	$\bar{\varPsi}_{\! \star i-1}^L \, v = \Vec{\kappa} \otimes \Vec{\mu} \otimes \Vec{\nu}$
	with
	\begin{equation}\nonumber
		\Vec{\kappa} \in \mathscr{S}_{i-1}^L
		\, ,
		\quad
		\Vec{\mu} \in
		\mathcal{P}_{\! \! \# (n+1-i) \QQ p_L}^{L,d}
		\, ,
		\quad
		\Vec{\nu} \in
		\bigotimes_{j=i+1}^n \mathcal{P}_{\! \! \# (n+1-j) \QQ p_L}^{L,d}
	\end{equation}
	and show that
	$\bar{\varPsi}_{\! \star i}^L \, \mathcal{U}^\varepsilon_i \QQ v \in \mathscr{Q}_{\star i}^L$.
	Due to the linearity and tensor-product structure of $\mathscr{S}_{i-1}^L$,
	defined by~\eqref{eq:ui-approx-SpaceAvg},
	this is sufficient to verify the claim.

	Applying Definition~\ref{def:unfolding-folding} and Lemma~\ref{lem:WL-avg}
	to $v$,
	we obtain
	\begin{multline}\label{eqn:avg-tensor-product}
		\Par{ \bar{\varPsi}_{\! \star i}^L \, \mathcal{U}^\varepsilon_i \QQ v}_{
			(j-1)2^L + j_i, \, j_{i+1},\ldots, j_n
		}
		=
		\Par{ \mathcal{U}^\varepsilon_i \QQ v}
		\Par[3]{
			\frac{j-1}{2^{\lambda_i}}
			+
			\frac{j_i-\frac12}{2^{\lambda_i+L}}
			,
			\,
			\frac{j_{i+1}-\frac12}{2^L},\ldots, \,\frac{j_n-\frac12}{2^L}
		}
		\\
		=
		\tilde{\Vec{\kappa}}_j
		\,
		\Vec{\mu}_{j_i}
		\,
		\Vec{\nu}_{j_{i+1},\ldots,j_n}
	\end{multline}
	for all
	$
		j\in \mathcal{J}^{\lambda_i,d}
	$
	and
	$
		j_i,j_{i+1},\ldots,j_n\in \mathcal{J}^{L,d}
	$,
	where
	$\tilde{\Vec{\kappa}} = \tilde{\bm{M}}_i^L \Vec{\kappa}$
	is the coefficient tensor of the component of $v$
	with respect to the first variable averaged over scale $\varepsilon_i$:
	\[
		\tilde{\Vec{\kappa}}_j
		=
		2^{-d(L+\lambda_{i-1}-\lambda_i)}
		\!\!\!\!\!\!\!\!\!\!\!\!\!\!
		\sum_{j'\in \mathcal{J}^{L+\lambda_{i-1}-\lambda_i,d}}
		\!\!\!\!\!\!\!\!\!\!
		\Vec{\kappa}_{(j-1) \, 2^{L+\lambda_{i-1}-\lambda_i} + j' }
	\]
	for every
	$
		j\in \mathcal{J}^{\lambda_i,d}
	$.
	With this notation, relation~\eqref{eqn:avg-tensor-product}
	implies
	\begin{equation}\nonumber
		\bar{\varPsi}_{\! \star i}^{L} \, \mathcal{U}^\varepsilon_i \, v
		=
		\Par{\tilde{\bm{M}}_i^L \Vec{\kappa}}
		\otimes
		\Vec{\mu}
		\otimes
		\Vec{\nu}
		\, .
	\end{equation}
	Since, according to~\eqref{eq:ui-approx-SpaceAvgX},
	$\Par{\tilde{\bm{M}}_i^L \Vec{\kappa}} \otimes \Vec{\mu} \in \mathscr{S}_i^L$,
	the claimed inclusion
	$\bar{\varPsi}_{\! \star i}^L \, \mathcal{U}^\varepsilon_i \QQ v \in \mathscr{Q}_{\star i}^L$
	follows immediately from~\eqref{eq:ui-approx-SpaceAvg}.
\end{proof}
%

%\bibliographystyle{plain}
%\bibliography{literature}
\def\cprime{$'$}

\end{document}